\newcommand{\mO}{\mathcal{O}}
\newcommand{\p}{\mathfrak{p}}
\newcommand{\Q}{\mathfrak{P}}
\newcommand{\qB}{\mathfrak{Q}}
\newcommand{\q}{\mathfrak{q}}
\newcommand{\B}{\mathfrak{B}}
\newcommand{\Gal}{\mathrm{Gal}}
\newcommand{\Hom}{\mathrm{Hom}}
\newtheorem{thm}{Theorem}[section]
\newtheorem{cor}[thm]{Corollary}
\newtheorem{lem}[thm]{Lemma}
\newtheorem{pro}[thm]{Proposition}
\newtheorem{nota}[thm]{Notation} 
\theoremstyle{definition}
\newtheorem{rem}[thm]{Remark}
\newtheorem{exa}[thm]{Example}
\begin{document}
\setlength{\parindent}{15pt}
\setlength{\ULdepth}{1pt}

\title{\addtocounter{footnote}{1} A Multiquadratic Field Generalization of Artin's Conjecture, unabridged}
\author{Mar\'ia Elena Stadnik \footnote{Author partially supported by NSF grants
DMS-0701048 and DMS-0846285}\\
 Birmingham-Southern College \\}
\date{}
\maketitle

\tableofcontents

\begin{abstract} 
We prove that (under the assumption of the generalized Riemann hypothesis) a totally real multiquadratic number field $K$ has a positive density of primes $p$ in $\mathbb{Z}$ for which the image of $\mO_K^{\times}$ in $\left(\mO_K/p\mO_K\right)^{\times}$ has minimal index $ (p-1)/2$ if and only if $K$ contains a unit of norm $-1$.\\
\end{abstract}

\section{Introduction}
In 1927, Emil Artin \cite{Ar} conjectured that for any integer $a$ not equal to $\pm 1$ or a square, there is a positive density of primes $p$ for which $a$ is a primitive root in $\mathbb{F}_p^{\times}$.  In 1967, Hooley \cite{H} proved that this density exists and is positive if the generalized Riemann hypothesis is true.  Many authors have since adapted the analytic methods of Hooley to various settings, including Cooke and Weinberger \cite{CW}, Weinberger \cite{We}, Matthews \cite{KRM}, Cangelmi and Pappalardi \cite{CP},  Lenstra \cite{Le}, Murty \cite{Mu}, and Roskam \cite{R1}, \cite{R2}.   \\

This article deals with the following generalization of Artin's conjecture:\\

\textit{For which number fields $K/\mathbb{Q}$ is the index of the unit group $\mO_K^{\times}$ in $\left( \mO_K/p\mO_K\right)^{\times}$  precisely $ (p-1)/2$ for infinitely many primes $p$?}\\

For any prime $p$ completely split in $K/\mathbb{Q}$, let $\psi_p$ denote the map $(\mO_K)^{\times} \rightarrow \left( \mO_K/p\mO_K\right)^{\times}$.  Since the image of $\mO_K^{\times}$ lies in the kernel of the norm map $N:\left( \mO_K/p\mO_K\right)^{\times} \rightarrow \mathbb{F}_p^{\times}/\left\{\pm 1\right\}$, the smallest possible index of $\psi_p\left(\mO_K^{\times}\right)$ in $\left( \mO_K/p\mO_K\right)^{\times}$ is $ (p-1)/2$.  We say that $K$ has \textit{minimal index mod $p$} if this index is exactly $ (p-1)/2$.  If every unit of $K$ has norm $+1$, then $\psi_p\left(\mO_K^{\times}\right)$ is contained in the kernel of the map $N:\left( \mO_K/p\mO_K\right)^{\times} \rightarrow \mathbb{F}_p^{\times}$.  Thus the index of $\psi_p(\mO_K^{\times})$ in $\left( \mO_K/p\mO_K\right)^{\times}$ is at least $p-1$.  We immediately deduce that if our generalization of Artin's conjecture is to be true, $K$ must contain a unit of norm $-1$.  We also quickly deduce a second necessary condition on $K$.\\

\noindent \textbf{Proposition \ref{proposition:totreal}}
\textit{If $K$ is a number field that has minimal index mod $p$ for infinitely many primes $p$ split in $K/\mathbb{Q}$, then $K$ is a totally real number field.}\\

 In this paper,  we adapt the techniques of Hooley to prove the following theorem.\\

\noindent \textbf{Theorem \ref{maintheorem}}
\textit{Let $K$ be a totally real multiquadratic field with unit group $\mO_K^{\times}$ that has ramification index $e_2(K/\mathbb{Q}) \leq 2$.  The generalized Riemann hypothesis implies that there is a positive
density of primes split in $K/\mathbb{Q}$ for which the image of $\mO_K^{\times}$ in $\left( \mO_K/p\mO_K\right)^{\times}$ has index $ (p-1)/2$ if and only if $K$ contains a unit of norm $-1$.   Moreover, when $K$ contains a unit of norm $-1$, the density $c$ is given by
$$ c= \frac{1}{2[K:\mathbb{Q}]} \left( \sum_{k|\Delta} \frac{\mu(k)|C_k|}{[K_k:K]} \right)\prod_{l \nmid 2\Delta}  \left(1 - \frac{1}{(l-1)}\left(1-\frac{(l-1)^{n-1}}{l^{n-1}}\right)\right).$$}

As an example, consider the field $K=\mathbb{Q}(\sqrt{5}, \sqrt{13})$, which satisfies the hypotheses of the theorem.  Using PARI, we computed the density of primes $p$ split in $K/\mathbb{Q}$ for which $K$ has minimal index mod $p$ among the first two hundred thousand primes to be 
$$ 0.05176. $$
In this paper, we prove that the generalized Riemann hypothesis implies the density of such primes $p$ should be $$ \frac{1}{8} \prod_{l \neq 2} \left(1- \frac{1}{[K(\zeta_l):K]}\left(1-\frac{(l-1)^3}{l^3}\right)\right) \approx 0.05142184\ldots,$$
which is very close to our calculations.  \\

We conjecture that Theorem \ref{maintheorem} holds for any totally real Galois number field $K$.  We also consider the related problem of determining which multiquadratic fields contain a unit of norm $-1$.  \\

\noindent \textbf{Theorem \ref{nonorm-1}}
\textit{If $K=\mathbb{Q}(\sqrt{d_{1}}, \ldots ,\sqrt{d_{m}})$ is a multiquadratic field of degree $n=2^m$, $m \geq 3$ and the class number of $K$ is odd, then $K$ contains no unit of norm $-1$. }\\

\textbf{Relationship to Artin-type problems.} As an integer $a$ is a primitive root in $\mathbb{F}_p^{\times}$ if and only if the subgroup that $a$ generates has smallest index $1$ in $\mathbb{F}_p^{\times}$, we see that this question is a natural generalization of Artin's conjecture.  Other Artin type problems, such as Weinberger's conjectural proof \cite{We} that the ring of integers of a number field with infinitely many units is a Euclidean domain if it is a principal ideal domain, become easier as the field degree increases.  This is partly the case in the setting of this problem, because as the field degree $n=[K:\mathbb{Q}]$ increases, so does the rank of the unit group $\mO_K^{\times}$ since $K$ is totally real.  However, there is a difficulty in our case that does not arise in most Artin-type problems.  Namely, as the field degree $n$ increases, the condition we impose requires that the image of the unit group $\mO_K^{\times}$ generates modules with increasing cardinality.  More precisely, for odd primes $l|p-1$, we require that the image of the unit group (which has rank $n-1$) generates a vector space in $\displaystyle \prod_{n} \left(\mathbb{F}_p^{\times}/(\mathbb{F}_p^{\times })^l\right)$ of rank $n-1$.\\
  
\textbf{Restriction to multiquadratic fields.}    We investigate only number fields that are Galois over $\mathbb{Q}$ in order to utilize the structure of the Galois group of $K/\mathbb{Q}$ in our proofs.  The assumption that $K$ is multiquadratic is essential to our arguments.  We use this assumption at various points in the article with the key stumbling block being Lemma \ref{M3lemma}.  Lemma \ref{M3lemma} is the generalization of Hooley's argument of sieving in the range of the largest primes.  To prove this lemma, we find a way to deal with all units of $K$ simultaneously by finding a nice basis of units $\epsilon_1, \ldots, \epsilon_{n-1}$ such that if the orders of $\epsilon_i$ modulo $p$ are coprime to $l$, then the orders of \textit{all} units are also coprime to $l$.  For example, if neither $\epsilon_i$ nor $\epsilon_j$ are $l^{th}$ powers, then neither is any multiplicative combination of the $\epsilon_i$ and $\epsilon_j$.  The way we do this is to diagonalize the units under the action of $G=\Gal(K/\mathbb{Q})$.  This approach is successful provided that two conditions hold.  First, this decomposition must split the units into $G$-representations of dimension $1$.  Because of the $G$-action on the units, this is only possible when $G$ is abelian.  Second, we require that this splitting occurs over $\mathbb{F}_l$ for \textit{every} prime $l$.  For both conditions to be satisfied, $G$ must be isomorphic to $(\mathbb{Z}/2\mathbb{Z})^m$ for some $m$, or equivalently $K$ must be multiquadratic.  \\

\textbf{Generalizations.}  In this article we choose only to investigate primes $p\in \mathbb{Z}$ that are split completely in $K/\mathbb{Q}$.  This is done to simplify many calculations and make concrete the structure of $\left( \mO_K/p\mO_K\right)^{\times}$ for general Galois number fields $K$.  It is expected that our arguments can be generalized to count densities of primes $p$ not split in $K/\mathbb{Q}$ in a manner similar to  \cite{R2}.  Primes not split in $K/\mathbb{Q}$ must be treated on a case by case basis for each Galois group.  There is some possibility that we could restrict ourselves to sieving primes $p$ such that all divisors of $p-1$ are primitive roots modulo $N$.  This would allow us to generalize the results of this paper to abelian extensions with Galois group $\mathbb{Z}/N\mathbb{Z}$ for certain $N$.  There seems to be some general difficulties to this approach.  \\
  
\textbf{Organization.} This article is organized as follows.  Section \ref{section:reformulation} includes a reformulation of the question into a question about a set of Frobenius elements in field extensions $K_l /K$ as $l$ ranges over all prime numbers that holds for general Galois number fields $K$ (as long as a technical condition, \textbf{(G1)}, is satisfied; see Remark \ref{G1}).  Section \ref{mq} includes a further reformulation of the problem for multiquadratic fields.  The techniques of Hooley are adapted in Section \ref{proofs} to prove the theorem for multiquadratic fields.  Section \ref{examples}  includes an investigation of units in multiquadratic fields.  Finally, Section \ref{rayclassfields} provides an application to ray class fields of conductor $p\mO_K$.

\section{Reformulation of the problem}\label{section:reformulation}
Let $K \neq \mathbb{Q}$ be a Galois number field containing a unit of norm $-1$.  Let $G=\Gal(K/\mathbb{Q})$ and $n=|G|=[K:\mathbb{Q}]$. 
Let $\mO_K$ denote the ring of algebraic integers of $K$, and let $\mO_K^{\times}$ denote the group of units of $\mO_K$, which we usually refer to as \textit{units of K}.  

\subsection{Restrictions on the field $K$}
Not every Galois number field $K$ containing a unit of norm $-1$ will have have minimal index mod $p$ for a positive density of split primes $p$.  We quickly deduce a necessary condition for this to be the case.

\begin{pro}\label{proposition:totreal}
If $K$ is a number field that has minimal index mod $p$ for infinitely many primes $p$ split in $K$, then $K$ is a totally real number field.
\end{pro}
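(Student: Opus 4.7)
The plan is to reduce to Proposition \ref{psimaximal} and then bound the image of $\psi_p$ using Dirichlet's unit theorem. By that proposition, the hypothesis forces $|\psi_p(\mO_K^\times)| = 2(p-1)^{n-1}$ for infinitely many primes $p$ that split completely in $K$. Since $(\mO_K/p\mO_K)^\times \cong \prod_{i=1}^n \mathbb{F}_p^\times$ is an abelian group of exponent $p-1$, any subgroup generated by $k$ elements has order at most $(p-1)^k$. The strategy is to produce a generating set for $\psi_p(\mO_K^\times)$ whose size forces $K$ to be totally real.

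To do this, let $r_1$ be the number of real embeddings of $K$, $r_2$ the number of pairs of complex embeddings, so $r_1 + 2r_2 = n$, and let $w = |\mu(K)|$. By Dirichlet's unit theorem, $\mO_K^\times$ is generated by $r_1 + r_2 - 1$ fundamental units together with a generator of the finite cyclic group $\mu(K)$. The images under $\psi_p$ of the fundamental units have order dividing $p-1$, while the image of the generator of $\mu(K)$ has order dividing $w$. Multiplying these contributions gives the bound
$$|\psi_p(\mO_K^\times)| \;\leq\; w\cdot (p-1)^{r_1+r_2-1}.$$

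Combining this bound with the equality $|\psi_p(\mO_K^\times)| = 2(p-1)^{n-1}$ forces $w(p-1)^{r_1+r_2-1} \geq 2(p-1)^{n-1}$. Since $w$ is a fixed constant depending only on $K$, this inequality can hold for infinitely many primes $p$ only if $r_1 + r_2 - 1 \geq n-1$, i.e.\ $r_2 \leq 0$. Hence $r_2 = 0$ and $K$ is totally real.

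There is no serious obstacle: the only point requiring any care is the bookkeeping that says an abelian group generated by $k$ elements each of order dividing $p-1$ has order at most $(p-1)^k$ (because such a group is a quotient of $(\mathbb{Z}/(p-1))^k$), so that the rank bound from Dirichlet's unit theorem translates cleanly into the size bound above.
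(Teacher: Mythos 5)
Your proof is correct and follows essentially the same route as the paper: invoke Proposition \ref{psimaximal} to get $|\psi_p(\mO_K^\times)|=2(p-1)^{n-1}$, bound the image by (torsion order)$\cdot(p-1)^{r_1+r_2-1}$ via Dirichlet's unit theorem, and let $p$ grow to force the unit rank to equal $n-1$, hence $r_2=0$. The only difference is cosmetic: the paper stops at rank $t=n-1$ and leaves the deduction $s=0$ implicit, which you spell out.
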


\begin{proof}
Suppose $K$ has signature $(r,s)$ and that $K$ has minimal index mod $p$ for infinitely many primes $p$ split in $K/\mathbb{Q}$.  Let $t$ be the rank of $\mO_K^{\times}$, so that $\mO_K^{\times} \cong \mathbb{Z}^t \oplus (\mO_K^{\times})_{\text{tors}}$, and let $m=|(\mO_K^{\times})_{\text{tors}}|<\infty$.  $\mO_K^{\times}$ has rank $t= r + s -1$.  By assumption we may consider a prime $p > m + 1$ completely split in $K$ for which $K$ has minimal index mod $p$; this implies $|\psi_p(\mO_K^{\times})|=2(p-1)^{n-1}$. Since $\psi_p$ is a map of $\mathbb{Z}-$modules, the maximal size of the image of $\psi_p$ is $m (p-1)^t$, so $m (p-1)^t= 2(p-1)^{n-1}$.  Since $ m < p-1$, this implies $t=n-1$.
\end{proof}
 
From now on we add the assumption that $K$ is totally real. 
\subsection{Transition from $\psi$ to $\phi_l$ for primes $l | p-1$}

Throughout this section, fix a prime $p \in \mathbb{Z}$ which splits completely in $K/\mathbb{Q}$, and let $\{ \p=\p_{1}, \p_{2}, ..., \p_{n} \} $ be the set of prime ideals of $K$ lying over $p$.   We will suppress the subscript $p$ from the map $\psi_p$.  $p$ is completely split in $K$ and $G$ acts transitively and faithfully on the set $\{ \p=\p_{1}, \p_{2}, \ldots , \p_{n} \} $ of prime ideals of $K$ lying over $p$, so for each $1 \leq i \leq n$, there is a unique $g=g_{i} \in G$ such that $g_{i}(\p) = \p_{i}$.  Define $\phi$ to be the composition 
 $$ \mO_K^{\times} \stackrel{\psi}{\rightarrow}  (\mO_{K}/p\mO_{K})^{\times} \stackrel{\sim}{\rightarrow} \prod_{i=1}^n (\mO_K/\p_i)^{\times} \stackrel{\sim}{\rightarrow} \prod_{i=1}^n \mathbb{F}^{\times}_p \stackrel{\sim}{\rightarrow} \mathbb{Z}/(p-1)\mathbb{Z}[G], $$
 $$ u \mapsto (a^{t_1}, \ldots, a^{t_n}) \mapsto \sum_{i=1}^n t_i [g_i]=: \phi(u) $$
where $a$ is a choice of a generator of $\mathbb{F}^{\times}_p$.  The image of $\psi$ lies in the kernel of the composition $(\mO_K/p\mO_K)^{\times} \stackrel{N}{\rightarrow} \mathbb{F}_p^{\times} \rightarrow \mathbb{F}_p^{\times}/\{\pm 1\}$, which is isomorphic as a $G$-module to the additive module $I=\ker(\mathbb{Z}/(p-1)\mathbb{Z}[G] \rightarrow \mathbb{Z}/((p-1)/2)\mathbb{Z}) \subseteq \mathbb{Z}/(p-1)\mathbb{Z}[G]$. Thus the image of $\phi$ lies in $I$. Notice $|I|=2(p-1)^{n-1}$ and $|Im(\psi)|= |Im(\phi)|$.

\begin{lem}\label{z2iso}
$K$ has minimal index mod $p$ if and only if for every prime $l$ there is an isomorphism of $\mathbb{F}_l[G]$-modules $\mO_K^{\times}/(\mO_K^{\times})^l \stackrel{\sim}{\rightarrow} I/lI$.  
\end{lem}

\begin{proof}
Suppose that $K$ has minimal index mod $p$, so that $|Im(\psi)|=2(p-1)^{n-1}$.  Since $2(p-1)^{n-1}=|I|$, $\phi : \mO_K^{\times} \rightarrow I$ is an epimorphism.  For any $l|p-1$, tensoring this map with $\mathbb{Z}/l\mathbb{Z}$ gives an epimorphism of $\mathbb{F}_l[G]$-modules $\mO_K^{\times}/(\mO_K^{\times})^l \twoheadrightarrow I/lI$.   Both $\mO_K^{\times}/(\mO_K^{\times})^l$ and $I/lI$ are $\mathbb{F}_l[G]$-modules of rank $n-1$ if $ l \neq 2$ or $\mathbb{F}_2[G]$-modules of rank $n$ if $l=2$, this map must be an isomorphism. If $l \nmid p-1$, then both $\mO_K^{\times}/(\mO_K^{\times})^l$ and $I/lI$ are 0 and the map is an isomorphism. \\

Now suppose that for every prime $l$ there is an isomorphism $\mO_K^{\times}/(\mO_K^{\times})^l \stackrel{\sim}{\rightarrow} I/lI$.  Let $M$ denote the cokernel $M=I/\phi(\mO_K^{\times})$; we show $M=0$ by showing for any $l$, the localized module $M_l=0$.  By definition, the sequence 
$$\mO_K^{\times} \stackrel{\phi}{\rightarrow} I \rightarrow M \rightarrow 0$$
is right exact, and so the sequence
$$\mO_K^{\times}/(\mO_K^{\times})^l \rightarrow I_l/lI_l \rightarrow M_l/lM_l \rightarrow 0$$
is also right exact.  By assumption the first map in this sequence is an isomorphism, so by exactness $M_l/lM_l=0$.  Hence $M_l = 0$ by Nakayama's lemma.
\end{proof}

The additive module $I$ fits into the exact sequence of $\mathbb{Z}-$modules
$$ 0 \longrightarrow I \longrightarrow \mathbb{Z}/(p-1)\mathbb{Z}[G] \stackrel{Tr}{\longrightarrow} \mathbb{Z}/((p-1)/2)\mathbb{Z} \longrightarrow 0,$$
where $Tr$ denotes the trace map that sends an element $\displaystyle \sum_{g \in G} a_g[g]$ to $\displaystyle \sum_{g \in G} a_g \in  \mathbb{Z}/\left((p-1)/2\right)\mathbb{Z}$.  \\
Let $B_l=\{x=\sum x_{i}[g_{i}] \in \mathbb{F}_{l}\left[G\right]:\sum x_{i}=0 \} \subseteq \mathbb{F}_l[G]$.

\begin{lem} 
$I/lI \cong B_l$ if $ l \neq 2$.
\end{lem}

\begin{proof}
Apply $\otimes_{\mathbb{Z}}\mathbb{Z}/l\mathbb{Z}$ to the above sequence and notice that some of the terms in the long exact sequence of Tor are zero.
\end{proof}

For $l=2$ the situation is more complicated.
\begin{lem}\label{torat2}
\begin{enumerate}\renewcommand{\labelenumi}{(\alph{enumi})}
\item If $p \equiv 3 \mod 4$, then $I/2I \cong \mathbb{F}_2[G]$.
\item If $ p \equiv 1 \mod 4$ and $|G|$ is odd, then $I/2I \cong \mathbb{F}_2[G]$.
\item If $p \equiv 1 \mod 4$ and $|G|$ is even, then $I/2I$ is not isomorphic to $\mathbb{F}_2[G]$.
\end{enumerate}
\end{lem}
\begin{proof}
$(a):$ Similar to the proof of the previous lemma.\\

$(b)$, $(c):$  Suppose $p \equiv 1 \mod 4$. As an abelian group, $I \cong (\mathbb{Z}/(p-1)\mathbb{Z})^{n-1} \oplus \mathbb{Z}/2\mathbb{Z}$, and so  $I/2I$ is a rank $n$ module over $\mathbb{Z}/2\mathbb{Z}$.  Let $d$ be such that $2^d ||p-1$; note $d \geq 2$.  Define $T=\ker(\mathbb{Z}/2^d\mathbb{Z}[G] \rightarrow \mathbb{Z}/(2^{d-1})\mathbb{Z})$, and define a $G$-module homomorphism $\delta: I \rightarrow T$ by reducing coefficients of elements of $I \mod 2^{d}$ and acting by the identity on elements of $G$.  Using the generalized Chinese remainder theorem we deduce that this map is an epimorphism. Tensoring with $\mathbb{Z}/2\mathbb{Z}$ we obtain a $G$-module map $I/2I \twoheadrightarrow T/2T$ 
between $G$-modules over $\mathbb{F}_2$ of the same rank, thus an isomorphism.\\

First suppose that $|G|$ is odd.  Notice every element in $T$ can be written as $2^{d-1} \beta + \sum_{i=1}^n a_i [g_i]$, where $\sum_{i=1}^n a_i \equiv 0 \mod 2^d$.  Let $\alpha= (2^{d-1} - |G|)[g_1] + \sum_{i=1}^n [g_1] \in T$; then for any $g_j \in G$, $g_j \alpha = \alpha + (2^{d-1}-|G|)([g_j]-[g_1])$.  Since $2^{d-1}-|G|$ is odd by assumption, the set $\left\{g_j \alpha - \alpha\right\}_{i=1}^n$ generates all elements  $\sum_{i=1}^n a_i [g_i]$ with $\sum_{i=1}^n a_i \equiv 0 \mod 2^d$ in $T/2T$.  Since $\alpha$ itself has trace $2^{d-1}$, we see that $\alpha$ generates all of $T/2T$ as a $G$-module.  Thus $T/2T$ is cyclic as a $G$-module; since it is of rank $n$, it must be isomorphic to $\mathbb{F}_2[G]$, and thus also $I/2I\cong \mathbb{F}_2[G]$.  \\

Now suppose that $|G|$ is even.  The element $\alpha$ lies in the $G$ invariants of $T/2T$ since for any $g_j \in G$, $g_j \alpha - \alpha= (2^{d-1}-|G|)([g_j]-[g_1]) \in 2T$.  Since also $2^{d-1}[g_1]$ lies in the $G$ invariants of $T/2T$, the $G$ invariants of $T/2T$ is at least $2-$dimensional over $\mathbb{F}_2$.  Since the $G$ invariants of $\mathbb{F}_2[G]$ is $1-$dimensional, $T/2T$ is not isomorphic to $\mathbb{F}_2[G]$ as $G$-modules, and so $I/2I$ is not isomorphic to $\mathbb{F}_2[G]$ as $G$-modules.

\end{proof}

For any $l$ (except $l=2$ when $|G|$ is even), let $\phi_l$ denote the composition of $\phi$ with the map to $\mathbb{F}_l[G]$ obtained from the previous lemmas.  Explicitly, this map is simply reduction mod $l$ of the coefficients $t_i$ of elements of $\mathbb{Z}/(p-1)\mathbb{Z}[G]$.  Using the previous three lemmas, we deduce the following theorem.
  
\begin{thm}\label{relatepsiandbl}
If $p \equiv 3 \mod 4$ or if $p \equiv 1 \mod 4$ and $|G|$ is odd, then $K$ has minimal index mod $p$ if and only if the image of  $\phi_l$ equals $B_l$ for every $l|p-1$, $l\neq 2$ and the image of $\phi_2$ is all of $\mathbb{F}_2[G]$.  If $ p \equiv 1 \mod 4$ and $|G|$ is even, then $K$ has minimal index mod $p$ if and only if $\mO_K^{\times}/(\mO_K^{\times})^2 \stackrel{\sim}{\rightarrow} I/2I$ and the image of  $\phi_l$ equals $B_l$ for every $l|p-1$, $l \neq 2$.   
\end{thm}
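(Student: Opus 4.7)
The plan is to assemble the results already proved in this section into the stated dichotomy. First I would apply Proposition \ref{psimaximal} to replace the condition $L_p = H(\zeta_p^+)$ with the condition that the image of $\psi_p$ has order $2(p-1)^{n-1}$. Proposition \ref{z2iso} then translates this further into the requirement that $\phi$ induces an isomorphism $\mO_K^{\times}/(\mO_K^{\times})^l \stackrel{\sim}{\rightarrow} I/lI$ of $\mathbb{F}_l[G]$-modules for every prime $l$; for $l \nmid p-1$ both sides vanish, so only the primes $l \mid p-1$ remain to be checked.

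For the first case (either $p \equiv 3 \pmod 4$, or $p \equiv 1 \pmod 4$ with $|G|$ odd) I would combine the two preceding propositions: for odd $l$ one has $I/lI \cong B_l$, while Proposition \ref{torat2}(a)--(b) gives $I/2I \cong \mathbb{F}_2[G]$. Under these identifications, the map of Proposition \ref{z2iso} becomes $\phi_l$ by construction, since $\phi_l$ is defined precisely as reduction mod $l$ of the coefficients of $\phi$. Because $\mO_K^{\times}/(\mO_K^{\times})^l$ and the target share the same $\mathbb{F}_l$-dimension (exactly the dimension count used in the proof of Proposition \ref{z2iso}), any epimorphism between them is automatically an isomorphism. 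Hence the isomorphism condition is equivalent to "image of $\phi_l$ equals $B_l$" for odd $l \mid p-1$ together with "image of $\phi_2$ equals $\mathbb{F}_2[G]$", which yields the first half of the theorem.

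For the second case ($p \equiv 1 \pmod 4$ and $|G|$ even) I would treat odd primes $l \mid p-1$ exactly as above, so that the condition on $\phi_l$ is again equivalent to $\mO_K^{\times}/(\mO_K^{\times})^l \cong I/lI$. At $l=2$, however, Proposition \ref{torat2}(c) forbids the identification $I/2I \cong \mathbb{F}_2[G]$, so there is no analogous reformulation through $\phi_2$. I would therefore retain the direct requirement $\mO_K^{\times}/(\mO_K^{\times})^2 \stackrel{\sim}{\rightarrow} I/2I$ supplied by Proposition \ref{z2iso}. Combining the odd-prime and $l=2$ conditions yields the second half of the theorem.

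The main obstacle is conceptual bookkeeping rather than technical work: one has to be precise that the epimorphism produced in Proposition \ref{z2iso} really is the one induced by $\phi$, so that surjectivity of $\phi_l$ onto $B_l$ (or $\mathbb{F}_2[G]$) genuinely corresponds to that epimorphism, and one must invoke the equality of $\mathbb{F}_l$-dimensions to promote "image equals the target" to "the map is an isomorphism". Once this matching of maps is checked, all three cases of Proposition \ref{torat2} slot directly into the framework of Propositions \ref{psimaximal} and \ref{z2iso}, and the theorem follows without further computation.
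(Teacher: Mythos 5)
Your proposal is correct and follows essentially the same route as the paper, which deduces the theorem exactly by combining Proposition \ref{psimaximal}, Proposition \ref{z2iso}, the identification $I/lI \cong B_l$ for $l \neq 2$, and Proposition \ref{torat2}; your write-up simply makes the bookkeeping (matching the induced map with $\phi_l$ and using the dimension count to upgrade surjectivity to isomorphism) explicit. No further changes are needed.
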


We also deduce the following weaker theorem, which will be useful later.

\begin{thm}\label{powerof2}
The index of $\psi_p(\mO_K^{\times})$ in $\left( \mO_K/p\mO_K\right)^{\times}$ is a power of $2$ times $ (p-1)/2$ if and only if the image of  $\phi_l$ equals $B_l$ for every $l|p-1$, $l\neq 2$. 
\end{thm}

\subsection{Transition from $\phi_l$ to Frobenius elements}

Theorem \ref{relatepsiandbl} can be reformulated as a question about certain Frobenius elements attached to the $\phi_{l}$ for $l|p-1$.  Given a prime $l |p-1$,  let $K_{l}=K\left(\zeta_{l}, \sqrt[l]{\mO_K^{\times}}\right)$, where $\zeta_l$ denotes a primitive $l^{th}$ root of unity.
Then $K_{l}$ is a Galois extension of $K(\zeta_{l})$, so let $G_{l}=\Gal(K_{l}/K(\zeta_{l}))$.  Since $\mO_K^{\times}$ is a $\mathbb{Z}-$module of rank $n-1$ with $\left(\mO_K^{\times}\right)_{\text{tors}}= \left\{\pm 1 \right\}$, for $l\neq 2$, $|G_l|= l^{n-1}$ and $G_{l}\cong (\mathbb{Z}/l\mathbb{Z})^{n-1}$ as groups.  In contrast, for $l=2$, $\sqrt{-1}$ is in $ K_2$, but not in $K(\zeta_2)=K$; thus $G_2 = \Gal(K_2/K(\zeta_2))\cong (\mathbb{Z}/2\mathbb{Z})^{n} $ and $|G_2|=2^n$.  \\

The next few lemmas will prove that the fields $K_l$ can be generated as a $G$-module over $K(\zeta_l)$ by a single unit.  The following lemma is proved in Washington \cite{cyclo}.

\begin{lem}\label{Washington} \cite[Lemma 5.27]{cyclo}
There is a unit $\epsilon$ of $K$ such that the set of units $\left\{g_i(\epsilon)\right\}_{i=1}^{n-1}$ is multiplicatively independent, hence is such that $\left[\mO_K^{\times}: \mathbb{Z}[G]\cdot \epsilon \right]=N <\infty$.
\end{lem}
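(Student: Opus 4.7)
The plan is to establish the stronger statement that $V := \mO_K^{\times} \otimes_{\mathbb{Z}} \mathbb{Q}$ is cyclic as a $\mathbb{Q}[G]$-module, from which the lemma follows easily. Indeed, given a cyclic generator of $V$, clearing denominators produces a unit $\epsilon \in \mO_K^{\times}$ with $\mathbb{Q}[G]\cdot\epsilon = V$, so $\mathbb{Z}[G]\cdot\epsilon$ is a sublattice of full rank $n-1$ in $\mO_K^{\times}/\mathrm{torsion}$ and hence has finite index. The $n$ conjugates $\{g_i(\epsilon)\}_{i=1}^n$ generate $\mathbb{Z}[G]\cdot\epsilon$ as a $\mathbb{Z}$-module, and (modulo torsion) their only $\mathbb{Z}$-linear relation is the norm relation $\prod_{i=1}^n g_i(\epsilon) = N_{\mathbb{Q}}^K(\epsilon) = \pm 1$; thus any $n-1$ of them are multiplicatively independent.

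To identify $V$ as a $\mathbb{Q}[G]$-module, I would use the Dirichlet logarithmic embedding
$$ L : \mO_K^{\times} \otimes \mathbb{R} \longrightarrow \mathbb{R}^n \cong \mathbb{R}[G], \qquad u \mapsto \sum_{g \in G} \log |g(u)| \, [g], $$
which, since $K$ is totally real Galois and the $n$ real embeddings are in canonical bijection with $G$, is an injection of $\mathbb{R}[G]$-modules whose image is precisely the augmentation ideal $I_{\mathbb{R}} \subset \mathbb{R}[G]$ (the trace-zero hyperplane) by Dirichlet's unit theorem. This yields $V \otimes_{\mathbb{Q}} \mathbb{R} \cong I_{\mathbb{R}} \cong I_{\mathbb{Q}} \otimes_{\mathbb{Q}} \mathbb{R}$ as $\mathbb{R}[G]$-modules, where $I_{\mathbb{Q}} \subset \mathbb{Q}[G]$ is the rational augmentation ideal. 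Since a finite-dimensional $\mathbb{Q}[G]$-module in characteristic zero is determined up to isomorphism by its character, and characters are preserved under extension of scalars, one concludes $V \cong I_{\mathbb{Q}}$ as $\mathbb{Q}[G]$-modules. Maschke's theorem then gives the splitting $\mathbb{Q}[G] = \mathbb{Q}\cdot e_0 \oplus I_{\mathbb{Q}}$ with central idempotent $e_0 = \frac{1}{n}\sum_{g \in G}[g]$, so $I_{\mathbb{Q}} = (1-e_0)\mathbb{Q}[G]$ is cyclic as a $\mathbb{Q}[G]$-module, generated by $1-e_0$.

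The main obstacle is the descent from the $\mathbb{R}[G]$-module picture supplied by Dirichlet to a genuine $\mathbb{Q}[G]$-module isomorphism between $V$ and $I_{\mathbb{Q}}$; the geometry only directly identifies the real base changes. This descent rests on the classical fact that finitely generated $\mathbb{Q}[G]$-modules are classified by their characters, together with the elementary observation that characters are invariant under base change to $\mathbb{R}$. Once this identification is in hand, cyclicity is a purely formal consequence of semisimplicity, and the multiplicative independence of $n-1$ conjugates follows from counting ranks as above.
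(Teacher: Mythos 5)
Your argument is correct, but it is not the route behind the paper's own justification: the paper simply quotes this statement from Washington \cite[Lemma 5.27]{cyclo}, whose proof is elementary geometry of numbers --- one first produces a ``Minkowski unit'' $\epsilon$ with $|g(\epsilon)|<1$ for all but one $g\in G$ (via the convex body theorem), and then deduces independence of $n-1$ conjugates from a linear-algebra lemma on matrices with dominant diagonal applied to $\bigl(\log|g_ig_j(\epsilon)|\bigr)$. You instead prove the stronger structural fact that $\mO_K^{\times}\otimes\mathbb{Q}$ is isomorphic to the augmentation ideal of $\mathbb{Q}[G]$, hence $\mathbb{Q}[G]$-cyclic, and then obtain independence and finite index by a rank count; the genuinely nontrivial extra input is the descent from the $\mathbb{R}[G]$-isomorphism supplied by Dirichlet to a $\mathbb{Q}[G]$-isomorphism, which is exactly the Noether--Deuring/character argument you invoke, and it is valid. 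This is essentially the same mechanism the paper itself uses later, in the proof of Lemma \ref{lnmidn}, to get cyclicity of $\mO_K^{\times}\otimes\mathbb{Q}$; so your route buys the module-theoretic statement the paper needs anyway (at the cost of semisimple representation theory over $\mathbb{Q}$), whereas Washington's argument is self-contained and elementary but yields only the existence of the unit. One small point to tidy: with the left action of $G$ on units, the map $u\mapsto\sum_{g}\log|g(u)|\,[g]$ intertwines the left action on $\mO_K^{\times}$ with the right regular action on $\mathbb{R}[G]$ (index by $g^{-1}$ if you want a left-module map); since the left and right regular representations are isomorphic --- and $G$ is abelian in the paper's multiquadratic application --- this does not affect the character computation or your conclusion.
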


The next lemma says that for every $l \nmid n=|G|$ (in particular the finitely many $l|N$ that are coprime to $n$), there exists a unit $\epsilon_l$ such that $\gcd \left(\left[\mO_K^{\times}: \mathbb{Z}[G]\cdot \epsilon_l \right], l \right) = 1$.

\begin{lem}\label{lnmidn}
If $l \nmid n$, then there is a unit $\epsilon_l$ so that $\gcd \left(\left[\mO_K^{\times}: \mathbb{Z}[G]\cdot \epsilon_l \right], l \right) = 1$.
\end{lem}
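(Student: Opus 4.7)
The plan is to recast the lemma as a statement about cyclic modules that can be settled by Nakayama's lemma and integral representation theory. The condition $\gcd([\mO_K^{\times}:\mathbb{Z}[G]\cdot\epsilon_l],l)=1$ is equivalent to $\mathbb{Z}_{(l)}[G]\cdot\epsilon_l=\mO_K^{\times}\otimes\mathbb{Z}_{(l)}$, and by Nakayama applied to the finitely generated $\mathbb{Z}_{(l)}$-module $\mO_K^{\times}\otimes\mathbb{Z}_{(l)}$, this is in turn equivalent to the image of $\epsilon_l$ generating $\mO_K^{\times}/(\mO_K^{\times})^l$ as an $\mathbb{F}_l[G]$-module. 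So the heart of the problem is to show that $\mO_K^{\times}/(\mO_K^{\times})^l$ is cyclic over $\mathbb{F}_l[G]$ when $l \nmid n$.

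Set $U:=\mO_K^{\times}/\{\pm 1\}$, a $\mathbb{Z}[G]$-module of rank $n-1$, and write $I_G$ for the augmentation ideal of $\mathbb{Z}[G]$. Since $K$ is totally real and Galois, the logarithmic embedding of units realizes $U\otimes\mathbb{R}$ as $I_G\otimes\mathbb{R}\subset\mathbb{R}[G]$, and comparison of $\mathbb{Q}[G]$-characters gives $U\otimes\mathbb{Q}\cong I_G\otimes\mathbb{Q}$. Because $l\nmid|G|$, the ring $\mathbb{Z}_{(l)}[G]$ is a maximal $\mathbb{Z}_{(l)}$-order in $\mathbb{Q}[G]$, and a standard theorem says $\mathbb{Z}_{(l)}[G]$-lattices are determined up to isomorphism by their rational envelope; hence $U\otimes\mathbb{Z}_{(l)}\cong I_G\otimes\mathbb{Z}_{(l)}$. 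The central idempotent $e_G:=|G|^{-1}N_G$ lies in $\mathbb{Z}_{(l)}[G]$ and decomposes it as $\mathbb{Z}_{(l)}[G]=\mathbb{Z}_{(l)}e_G\oplus(1-e_G)\mathbb{Z}_{(l)}[G]$, with $I_G\otimes\mathbb{Z}_{(l)}\cong(1-e_G)\mathbb{Z}_{(l)}[G]$, a cyclic principal summand. Thus $U/lU$ is cyclic over $\mathbb{F}_l[G]$. For odd $l$ this equals $\mO_K^{\times}/(\mO_K^{\times})^l$ and we are finished. For $l=2$ the hypothesis $l\nmid n$ forces $n$ odd, so $\mathbb{F}_2[G]$ is semisimple by Maschke; tensoring $0\to\{\pm 1\}\to\mO_K^{\times}\to U\to 0$ with $\mathbb{F}_2$ yields
$$0\to\mathbb{F}_2\to\mO_K^{\times}/(\mO_K^{\times})^2\to U/2U\to 0,$$
which splits over the semisimple ring, giving $\mO_K^{\times}/(\mO_K^{\times})^2\cong\mathbb{F}_2\oplus(1-e_G)\mathbb{F}_2[G]=\mathbb{F}_2[G]$, again cyclic.

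The main obstacle is the integral representation-theoretic step promoting the rational isomorphism $U\otimes\mathbb{Q}\cong I_G\otimes\mathbb{Q}$ to the local-integral isomorphism $U\otimes\mathbb{Z}_{(l)}\cong I_G\otimes\mathbb{Z}_{(l)}$. This rests on two distinct inputs: the identification of $U\otimes\mathbb{Q}$ as the augmentation representation (using total reality and the Galois hypothesis so that $G$ acts on the archimedean places by the regular representation) and the maximal-order property of $\mathbb{Z}_{(l)}[G]$ (which uses exactly $l\nmid|G|$). Once cyclicity is in hand, lifting a cyclic generator of $\mO_K^{\times}/(\mO_K^{\times})^l$ to $\epsilon_l\in\mO_K^{\times}$ and applying the Nakayama reformulation above finishes the proof.
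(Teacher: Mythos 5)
Your proof is correct, and its skeleton is the same as the paper's: both identify the rational representation of the units with the augmentation representation via the logarithmic embedding and Dirichlet's theorem, exploit $l\nmid |G|$ to transfer cyclicity to the $l$-local/mod-$l$ level, and finish with Nakayama together with a localization argument showing the index is prime to $l$. The differences are in how the transfer is made and in the final lifting. The paper transfers only the \emph{property} of cyclicity: it shows $\mO_K^{\times}\otimes\mathbb{Q}$ is a direct summand of $\mathbb{Q}[G]$, hence cyclic, and then passes to $\mathbb{F}_l[G]$ via the correspondence between $\mathbb{Q}_l[G]$-modules and $\mathbb{F}_l[G]$-modules when $l\nmid|G|$. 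You instead invoke the classification of lattices over the maximal order $\mathbb{Z}_{(l)}[G]$ to obtain an actual integral isomorphism of your $U=\mO_K^{\times}/\{\pm1\}$ localized at $l$ with the principal summand $(1-e_G)\mathbb{Z}_{(l)}[G]$; this is stronger than needed but standard (the usual references state the lattice rigidity over the complete ring $\mathbb{Z}_l$, and one either descends to $\mathbb{Z}_{(l)}$ by the completion criterion for lattice isomorphism or simply reduces mod $l$ and quotes the semisimple decomposition map, so the citation is only cosmetically imprecise). Two points where your route is cleaner: you lift a generator of $\mO_K^{\times}/(\mO_K^{\times})^l$ directly to a unit, avoiding the paper's step of writing the $\mathbb{Z}_{(l)}$-generator as $u^{r/s}$ with $l\nmid s$ and taking $\epsilon_l=u^r$; and you handle the torsion $\{\pm1\}$ at $l=2$ (when $n$ is odd) explicitly via the split exact sequence over the semisimple ring $\mathbb{F}_2[G]$, a case where the paper's passage through $\mO_K^{\times}\otimes\mathbb{Q}$ silently discards a one-dimensional piece and the cyclicity of $\mO_K^{\times}\otimes\mathbb{F}_2$ needs exactly the extra argument you supply.
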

\begin{proof}
We first show that $\mO_K^{\times} \otimes \mathbb{F}_l$ is cyclic as a $G$-module over $\mathbb{F}_l$.  Consider the map
$ \mO_K^{\times} \stackrel{L}{\longrightarrow} \mathbb{R}[G],$
$L: u \mapsto (\log|g_1(u)|, \log|g_2(u)|, \ldots ,\log|g_n(u)|)$.  $L$ maps $\mO_K^{\times}$ into the trace zero submodule $\mathbb{R}[G]^{\text{tr}=0}$ of $\mathbb{R}[G]$.  By Dirichlet's unit theorem, after tensoring with $\mathbb{R}$ this map becomes an isomorphism $\mO_K^{\times}\otimes_{\mathbb{Z}} \mathbb{R} \stackrel{L \otimes_{\mathbb{Z}} \mathbb{R}}{\longrightarrow} \mathbb{R}[G]^{\text{tr}=0}$.  Since $\mathbb{C}$ is flat over $\mathbb{R}$, this gives an isomorphism $\mO_K^{\times}\otimes \mathbb{C}\cong \mathbb{C}[G]^{\text{tr}=0}$.  By considering the $G \times G$ bimodule structure on $\mathbb{Q}[G]$, it follows that $\mO_K^{\times}\otimes \mathbb{Q}$ is a direct summand of $\mathbb{Q}[G]$ and hence cyclic as a $\mathbb{Q}[G]$-module.  Because localization is exact, $\mO_K^{\times}\otimes \mathbb{Q}_l$ is cyclic as a $\mathbb{Q}_l[G]$-module.  Since $l \nmid |G|$, there is an equivalence of categories between $\mathbb{Q}_l[G]$-modules and $\mathbb{F}_l[G]$-modules, and so $\mO_K^{\times} \otimes \mathbb{F}_l $ is also cyclic as an $\mathbb{F}_l[G]$-module.  \\
   
Since $\mO_K^{\times} \otimes \mathbb{F}_l $ is cyclic as a $G$-module, by Nakayama's lemma the $\mathbb{Z}_{(l)}-$module $\mO_K^{\times} \otimes \mathbb{Z}_{(l)}$ obtained by tensoring $\mO_K^{\times}$ with the local ring $\mathbb{Z}_{(l)}$ is also cyclic as a $G$-module.  Thus there exists a unit $\tilde{\epsilon_l} \in \mO_K^{\times}\otimes \mathbb{Z}_{(l)}$ such that $\mO_K^{\times}\otimes \mathbb{Z}_{(l)}=\mathbb{Z}_{(l)}[G]\cdot \tilde{\epsilon_l}$.  Viewing $\mathbb{Z}_{(l)} \subseteq \mathbb{Q}$, we see $\tilde{\epsilon_l}=u^{r/s}$ for some $u \in \mO_K^{\times}$ and integers $r$ and $s$ with $l \nmid s$.  Set $\epsilon_l := u^r \in \mO_K^{\times}$; then the $G$-modules generated by $\epsilon_l$ and $\tilde{\epsilon_l}$ coincide in $ \mO_K^{\times}\otimes \mathbb{Z}_{(l)}$.  Thus after tensoring the short exact sequence 
$$ 0 \rightarrow \mathbb{Z}[G]\stackrel{\cdot \epsilon_l}{\longrightarrow} \mO_K^{\times} \rightarrow  \mO_K^{\times}/\mathbb{Z}[G]\cdot \epsilon_l \rightarrow 0$$
with $\mathbb{Z}_{(l)}$, we obtain the exact sequence
$$  \mathbb{Z}[G]\otimes \mathbb{Z}_{(l)}\stackrel{\cdot \epsilon_l}{\longrightarrow} \mO_K^{\times}\otimes \mathbb{Z}_{(l)} \rightarrow  \mO_K^{\times}/\mathbb{Z}[G]\cdot \epsilon_l\otimes \mathbb{Z}_{(l)} \rightarrow 0,$$
where the first map is an isomorphism.  Thus $\mO_K^{\times}/\mathbb{Z}[G]\cdot \epsilon_l\otimes \mathbb{Z}_{(l)} =0$.  Hence $\mO_K^{\times}/\mathbb{Z}[G]\cdot \epsilon_l$ is torsion as a $\mathbb{Z}[G]$-module and has order coprime to $l$.
\end{proof}

 \begin{rem}\label{G1} It remains to consider those primes $l|n$.  The following condition must be satisfied by the field $K$:\\
$$\text{\textbf{(G1)} If }l|n\text{, then there is a unit }\epsilon_l \in \mO_K^{\times}\text{ such that }~\gcd \left(\left[\mO_K^{\times}: \mathbb{Z}[G]\cdot \epsilon_l \right], l\right)=1.$$

We prove in Theorem \ref{G1satisfied} that this condition is satisfied in a multiquadratic field by any unit that has norm $-1$.  For more general totally real Galois fields this condition must be satisfied; it is critical for the strategy of our proof.

\end{rem}

For a unit $\eta \in \mO_K^{\times}$, let $U_{\eta} = \mathbb{Z}[G]\cdot \eta$ be the multiplicative subgroup of $\mO_K^{\times}$ generated by $\eta$ as a $G$-module.  

\begin{lem}\label{lthroots}
If $\eta$ is a unit satisfying  $\gcd\left(\left[\mO_K^{\times}: U_{\eta}\right], l \right)=1$ then $\mO_K^{\times} \cdot \sqrt[l]{U_{\eta}} = \sqrt[l]{\mO_K^{\times}}$.
\end{lem}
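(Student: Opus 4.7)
The plan is to establish the equality as an equality of subgroups of $\overline{K}^{\times}$ by proving the two inclusions separately. One direction is essentially definitional, and the other uses the coprimality of $[\mO_K^{\times}:U_{\eta}]$ with $l$ through a Bézout argument.

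For the containment $\mO_K^{\times} \cdot \sqrt[l]{U_{\eta}} \subseteq \sqrt[l]{\mO_K^{\times}}$, I would simply unwind definitions: since $U_{\eta} \subseteq \mO_K^{\times}$, if $\beta = u \cdot v$ with $u \in \mO_K^{\times}$ and $v^l \in U_{\eta}$, then $\beta^l = u^l v^l \in \mO_K^{\times}$, so $\beta \in \sqrt[l]{\mO_K^{\times}}$. No hypothesis on the index is needed for this direction.

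The substance of the lemma lies in the reverse containment $\sqrt[l]{\mO_K^{\times}} \subseteq \mO_K^{\times} \cdot \sqrt[l]{U_{\eta}}$. I would argue as follows. Set $N := [\mO_K^{\times}: U_{\eta}]$, so Lagrange gives $u^N \in U_{\eta}$ for every $u \in \mO_K^{\times}$. Since $\gcd(N,l) = 1$ by hypothesis, one can choose integers $a,b$ with $aN + bl = 1$. Given any $\alpha \in \sqrt[l]{\mO_K^{\times}}$, set $u := \alpha^l \in \mO_K^{\times}$ and decompose
$$\alpha \;=\; \alpha^{aN+bl} \;=\; (\alpha^N)^{a}\cdot(\alpha^l)^{b} \;=\; (\alpha^N)^{a}\cdot u^{b}.$$
The second factor $u^b$ lies in $\mO_K^{\times}$, while the first factor lies in $\sqrt[l]{U_{\eta}}$ because $\bigl((\alpha^N)^a\bigr)^l = (u^N)^a \in U_{\eta}$. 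Hence $\alpha \in \mO_K^{\times}\cdot \sqrt[l]{U_{\eta}}$, completing the proof.

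I do not anticipate any serious obstacle: the conceptual content is just that coprimality of the index $N$ with $l$ is exactly what is needed to separate any would-be $l$-th root of a unit into a unit times an $l$-th root of an element of $U_{\eta}$. The only mild care required is to fix $\sqrt[l]{(\cdot)}$ as the set of $l$-th roots inside a fixed algebraic closure of $K$, so that all the $l$-th powers and roots manipulated above live in a common multiplicative group and the Bézout identity can be applied unambiguously.
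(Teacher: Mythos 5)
Your proof is correct and follows essentially the same route as the paper: the easy containment from $U_{\eta} \subseteq \mO_K^{\times}$, and for the reverse inclusion the same B\'ezout decomposition $x = (x^l)^b (x^{N})^a$ with $x^l \in \mO_K^{\times}$ and $x^{N} \in \sqrt[l]{U_{\eta}}$ (the paper writes the index as $r$ and leaves the Lagrange step implicit, which you spell out). No substantive difference.
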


\begin{proof}
$U_{\eta} \subseteq \mO_K^{\times}$ and so $\mO_K^{\times} \cdot \sqrt[l]{U_{\eta}} \subseteq \sqrt[l]{\mO_K^{\times}}$.  Conversely, let $\left[\mO_K^{\times}: U_{\eta}\right]=r$ and let $x \in \sqrt[l]{\mO_K^{\times}}$; notice that $x^r \in \sqrt[l]{U_{\eta}}$ and $x^l \in \mO_K^{\times}$.  Since $\gcd(r,l)=1$ there are integers $a$ and $b$ such that $1=al+br$, and $\displaystyle x=x^{al+br}=(x^l)^b(x^r)^a \in \mO_K^{\times} \cdot \sqrt[l]{U_{\eta}}$.
\end{proof} 

For a unit $\eta$, set $\eta_{i}:= g_{i}^{-1}(\eta)$ for $1 \leq i \leq n $.  By Lemmas \ref{Washington}, \ref{lnmidn}, and \ref{lthroots}, and because $K$ satisfies condition \textbf{(G1)}, we deduce for every prime $l$ there is a unit $\epsilon$ such that  $K_{l}=K(\zeta_{l}, \sqrt[l]{\epsilon_{1}}, \ldots , \sqrt[l]{\epsilon_{n}})$.  Also, note there are only finitely many distinct $\epsilon$ for all $l$ prime.\\

 Given a prime $\Q$ of $K(\zeta_l)$ lying above $\p \subseteq \mO_K$, let $\sigma_{\Q}=(\Q, K_l/K(\zeta_l)) \in G_{l}$ be the Frobenius element attached to a prime $\Q$.  This is a well defined element of $G_{l}$ (independent of choice of prime ideal $\B|\Q$ in $K_l$) since the extension $K_{l}/K(\zeta_{l})$ is abelian. 

\begin{thm}\label{reformulation}
Assume that $K$ satisfies condition \textbf{(G1)}.  Let $p \in \mathbb{Z}$ be a completely split prime in $K$. 
\begin{enumerate}\renewcommand{\labelenumi}{(\alph{enumi})}
\item If $p \equiv 3 \mod 4$, $K$ has minimal index mod $p$ if and only if for every $l|p-1$, the Frobenius element $\sigma_{\Q}$  of any prime $\Q$ of $K(\zeta_l)$ lying above $p$ generates $G_{l}=\Gal(K_{l}/K(\zeta_{l}))$ as a $G$-module.  
\item If $p \equiv 1 \mod 4$ and $|G|$ is even, then  $K$ has minimal index mod $p$ if and only if  $\mO_K^{\times}/(\mO_K^{\times})^2 \stackrel{\sim}{\rightarrow} I/2I$ and for every $l|p-1$, $l \neq 2$, the Frobenius element $\sigma_{\Q}$ of any prime $\Q$ of $K(\zeta_l)$ lying above $p$ generates $G_l$ as a $G$-module.  
\item If $p \equiv 1 \mod 4$ and $|G|$ is odd, then $K$ does not have minimal index mod $p$.
\end{enumerate}
\end{thm}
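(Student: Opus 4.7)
The plan is to view this theorem as the bookkeeping step that assembles three previously established dictionary entries: Theorem \ref{relatepsiandbl} translates the equality $L_p = H(\zeta_p^+)$ into surjectivity conditions on the maps $\phi_l$; Proposition \ref{imagesequal} identifies $\phi_l(\epsilon)$ with $f(\sigma_{\Q})$ for a suitable generating unit $\epsilon$; and Proposition \ref{glandblareiso} identifies $G_l$ with $B_l$ via $f$ when $l\neq 2$. Assembling these in each of the three cases is the whole proof.

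For parts (a) and (b), I would first record that, by Lemmas \ref{Washington}, \ref{lnmidn}, and \ref{lthroots} combined with hypothesis \textbf{(G1)}, for every prime $l$ there is a single unit $\epsilon \in \mO_K^{\times}$ whose $G$-orbit generates $\mO_K^{\times}$ modulo $l$-th powers. Consequently the image of $\phi_l$ is the $\mathbb{F}_l[G]$-submodule generated by $\phi_l(\epsilon)$, and by Proposition \ref{imagesequal} this equals the $G$-submodule of $\mathbb{F}_l[G]$ generated by $f(\sigma_{\Q})$. For $l\neq 2$, Proposition \ref{glandblareiso} supplies the $G$-module isomorphism $f : G_l \to B_l$, under which $Im(\phi_l) = B_l$ is equivalent to $\sigma_{\Q}$ generating $G_l$ as a $G$-module, which is exactly the condition in the theorem. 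For the $l=2$ case that appears in (a), the same formula for $f$ extends to an injective $G$-module homomorphism $G_2 \to \mathbb{F}_2[G]$, and since $|G_2| = 2^n = |\mathbb{F}_2[G]|$ it is in fact an isomorphism; hence $Im(\phi_2) = \mathbb{F}_2[G]$ is equivalent to $\sigma_{\Q}$ generating $G_2$ as a $G$-module. In case (b) the $l=2$ condition is kept as the untranslated isomorphism $\mO_K^{\times}/(\mO_K^{\times})^2 \cong I/2I$ already recorded in Theorem \ref{relatepsiandbl}.

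For part (c), I would apply Proposition \ref{torat2}(b) to get $I/2I \cong \mathbb{F}_2[G]$, so that Theorem \ref{relatepsiandbl} forces $Im(\phi_2) = \mathbb{F}_2[G]$ if $L_p = H(\zeta_p^+)$. The Remark preceding the theorem then provides the obstruction: since $-1$ is a square modulo $p$, the prime $p$ splits in $K(\sqrt{-1})/K$, hence $\sigma_{\Q}$ restricts trivially to $K(\sqrt{-1})$ and so lies in $\Gal(K_2/K(\sqrt{-1}))$, whose image under $f$ is the proper $G$-submodule $B_2 \subsetneq \mathbb{F}_2[G]$ of $\mathbb{F}_2$-dimension $n-1$. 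Thus the $G$-submodule generated by $f(\sigma_{\Q})$ lies in $B_2$ and cannot equal $\mathbb{F}_2[G]$, contradicting the required surjectivity of $\phi_2$.

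The main potential obstacle is purely bookkeeping: one must track for each $l$ which generating unit $\epsilon$ is in play (the Washington unit from Lemma \ref{Washington}, the unit from Lemma \ref{lnmidn}, or the unit guaranteed by \textbf{(G1)} for $l\mid n$), and confirm that the whole statement is independent of the choice of prime $\Q \mid p$ — a point already settled in the paragraph immediately preceding the theorem.
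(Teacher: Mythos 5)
Your proposal is correct and follows essentially the same route as the paper: combine Theorem \ref{relatepsiandbl} with Propositions \ref{glandblareiso} and \ref{imagesequal} (and the independence of the choice of $\Q$) to translate the conditions on $Im(\phi_l)$ into conditions on $\sigma_{\Q}$, and use the splitting of $p$ in $K(\sqrt{-1})$ when $p\equiv 1 \bmod 4$ to rule out case (c), exactly as in the remark preceding the theorem. Your explicit counting argument that $f:G_2\to\mathbb{F}_2[G]$ is an isomorphism in case (a) merely fills in a step the paper leaves implicit.
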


\begin{proof}
  For any $l|p-1$, let $G'_l:=\Gal(K_l/K)$.  From Kummer theory, there is a short exact sequence
$$ 1 \rightarrow \mu_l \rightarrow \bar{K}^{\times} \rightarrow \bar{K}^{\times} \rightarrow 1$$
given by sending an element of $\bar{K}^{\times}$ to its $l^{th}$ power, where $\mu_l$ denotes the set of $l^{th}$ roots of unity.  Taking invariants under the absolute Galois group $\Gal(\overline{K(\zeta_l)}/K(\zeta_l))$ of $K(\zeta_l)$ and noticing that $H^1(K(\zeta_l), \bar{K}^{\times})=0$ by Hilbert's theorem 90, we obtain the Kummer isomorphism $H^1(K(\zeta_l), \mu_l ) \cong K(\zeta_l)^{\times}/K(\zeta_l)^{\times l}$.  Since $\mu_l$ is trivial over $K(\zeta_l)$, $H^1(K(\zeta_l), \mu_l)$ records the homomorphisms from the absolute Galois group of $K(\zeta_l)$ to $\mu_l$.  This is dual to the Galois group $\Gamma_{K(\zeta_l)}$ of the maximal abelian extension of $K(\zeta_l)$, and so we have
$$ \Gamma_{K(\zeta_l)}\cong  \Hom(K(\zeta_l)^{\times}/K(\zeta_l)^{\times l} , \mu_l) = \Hom(K(\zeta_l)^{\times}, \mu_l)$$
given by sending $\sigma$ to $\displaystyle \left(\alpha \mapsto \frac{\sigma(\sqrt[l]{\alpha})}{\sqrt[l]{\alpha}}\right)$.  For any place $v$ in $K(\zeta_l)$ above $p$, we have the commutative diagram\\
\begin{center}
$ \xymatrix{   \Gamma_{K(\zeta_l)_v} \ar[d] \ar[r] & \Hom(K(\zeta_l)^{\times}_v , \mu_l)  \ar[d]   \\ 
\Gamma_{K(\zeta_l)} \ar[r] & \Hom(K(\zeta_l)^{\times}, \mu_l)}$ \\
\end{center}
where the vertical maps are isomorphisms.  Let $k$ be the residue field of $K(\zeta_l)$ at $v$; by assumption on $p$, this is also the residue field of $K$ at $\p$.  Then there is a map
$$\Gamma_{K(\zeta_l)_v}= \Hom(K(\zeta_l)^{\times}_v, \mu_l) \rightarrow \Hom(\mO_{l,v}^{\times}, \mu_l) \rightarrow \Hom(k^{\times}, \mu_l)$$
which corresponds to the maximal unramified quotient of $\Gamma_{K(\zeta_l)_v}$.  The image of a lift of the Frobenius element is computed on $\alpha \in \mO_{l,v}^{\times}$ by reducing modulo $\p$ in $K$ and considering the corresponding action on $k^{\times} = \left(\mO_K/\p\right)^{\times}$.  In particular, for the extension $K_l$, there is a commutative diagram\\
\begin{center}
$ \xymatrix{   \prod_v G'_{l,v} \ar[d] \ar[r] & \prod_{\p} \Hom(\left(\mO_K/\p\right)^{\times} , \mu_l)  \ar[d]   \\ 
G'_l \ar[r]^{\cong}  & \Hom(\mO_K^{\times}, \mu_l)}$ \\
\end{center}
where the vertical arrow denotes reduction modulo $\p$, and the image is exactly the Frobenius element at $\p$.  Since $G$ permutes the Frobenius elements transitively, we deduce that the $G$-module generated by any Frobenius element is precisely the image of the map 
$$\Hom(\left(\mO_K/p\mO_K\right)^{\times} , \mu_l) \rightarrow \Hom(\mO_K^{\times}, \mu_l).$$
If $l \neq 2$, then $ \Hom(\mO_K^{\times}, \mu_l)$ has order $l^{n-1}$, and the map is surjective if and only if the image of $\mO_K^{\times}$ in $(\mO_K/p\mO_K)^{\times}/(\mO_K/p\mO_K)^{\times l}$ has order $l^{n-1}$, or equivalently if and only if $\mO_K^{\times}$ surjects onto the trace zero part of this space.  If $l=2$, then $ \Hom(\mO_K^{\times}, \mu_l)$ has order $2^{n}$, and the reduction map $\mO_K^{\times} \rightarrow (\mO_K/p\mO_K)^{\times}/(\mO_K/p\mO_K)^{\times 2}$ must be an isomorphism.  Now apply Theorem \ref{relatepsiandbl}.\\

 A priori there are $[K(\zeta_l):\mathbb{Q}]$ primes $\Q$ of $K(\zeta_{l})$ lying above $p$ since $p$ is split in $K/\mathbb{Q}$ and $l|p-1$.  However, for two primes $\Q$ and $\Q'$ lying above $p$, $\sigma_{\Q}$ and $\sigma_{\Q'}$ generate subgroups in $G_{l}$ of the same size which are conjugate in $\Gal(K_l/\mathbb{Q})$, and so $\sigma_{\Q}$ generates $G_l$ as a $G$-module if and only if $\sigma_{\Q'}$ does.  Hence the choice of Frobenius element $\sigma_{\Q} \in G_l$ is independent of $\Q|p$ (and so also independent of choice of $\p | p$), and we may choose the Frobenius element of any $\Q$ lying over $p$. 
 
\end{proof}

Using Theorem \ref{powerof2} we also deduce the following.

\begin{thm}\label{reformulationpowerof2}
Assume that $K$ satisfies condition \textbf{(G1)}.  Let $p \in \mathbb{Z}$ be a completely split prime in $K$.  The index of $\psi_p(\mO_K^{\times})$ in $\left( \mO_K/p\mO_K\right)^{\times}$ is a power of 2 times $ (p-1)/2$ if and only if for every $l|p-1$, $l \neq 2$, the Frobenius element $\sigma_{\Q}$ of any prime $\Q$ of $K(\zeta_l)$ lying above $p$ generates $G_l$ as a $G$-module.  

\end{thm}

\section{Multiquadratic fields}\label{mq}

We will now add the assumption that $K$ is a multiquadratic field.  Throughout this section and the next, $K$ will denote a totally real multiquadratic field containing a unit of norm $-1$ of degree $n=2^m$ over $\mathbb{Q}$ and $G=\Gal(K/\mathbb{Q})\cong (\mathbb{Z}/2\mathbb{Z})^{m}$, $m \geq 1$.    We also add the assumption that the ramification index of $2$ in $K/\mathbb{Q}$ satisfies $e_2(K/\mathbb{Q}) \leq 2$.  This assumption is only used in Lemma \ref{genusismq} to prove that the genus field of $K$ is also multiquadratic.  In this section we will reinterpret the equivalent conditions for $K$ to have minimal index mod $p$ given in Theorem \ref{reformulation} for multiquadratic fields.  We also prove that $K$ satisfies condition \textbf{(G1)}.  

\subsection{Determination of the map $\psi$ at $l=2$}

  In this section we prove that if $K$ has minimal index mod $p$, then $p \equiv 3 \mod 4$, and in this case the map $\phi_2$ is an epimorphism.  The following lemma uses in an essential way the fact that $K$ is a multiquadratic field.

\begin{lem}\label{lemma:sum-1generates}
An element $x=\sum_{i=1}^n x_i [g_i] \in \mathbb{F}_2[G]$ generates $\mathbb{F}_2[G]$ as a $G-$module if and only if $$\sum_{i=1}^n x_i \equiv 1 \mod 2.$$ 
\end{lem}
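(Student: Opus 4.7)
The \emph{only if} direction is immediate from the augmentation map $\mathrm{aug}\colon \mathbb{F}_2[G]\to \mathbb{F}_2$, $\sum x_i[g_i]\mapsto\sum x_i$. This is a surjective $G$-module homomorphism (with trivial $G$-action on the target), so if $x$ generates $\mathbb{F}_2[G]$ as a $G$-module, then $\mathrm{aug}(x)=\sum x_i$ must generate $\mathbb{F}_2$, forcing $\sum x_i\equiv 1\pmod 2$.

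For the converse, the plan is to show that any $x$ with $\sum x_i\equiv 1\pmod 2$ is in fact a \emph{unit} in the commutative ring $\mathbb{F}_2[G]$; then $\mathbb{F}_2[G]\cdot x=\mathbb{F}_2[G]$ and $x$ generates the regular module. The essential use of $K$ being multiquadratic enters here: $G\cong (\mathbb{Z}/2\mathbb{Z})^m$ is an elementary abelian $2$-group, so in characteristic $2$ one has $([g]-1)^2=[g^2]-1=0$ for every $g\in G$. Fixing generators $g_{i_1},\ldots,g_{i_m}$ of $G$, the augmentation ideal $I:=\ker(\mathrm{aug})$ is generated as an ideal by the $m$ pairwise commuting square-zero elements $[g_{i_j}]-1$. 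By the pigeonhole principle, any monomial of degree $m+1$ in these generators contains a repeated factor, and commutativity lets us square it to zero; hence $I^{m+1}=0$.

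Consequently $\mathbb{F}_2[G]$ is a local ring whose unique maximal ideal is the nilpotent augmentation ideal $I$, and an element is a unit if and only if it lies outside $I$, i.e.\ has augmentation $1$. Since $\mathrm{aug}(x)=1$ by assumption, $x$ is a unit and we are done. Equivalently, one can finish with Nakayama's lemma: the relation $\mathrm{aug}(\mathbb{F}_2[G]\cdot x)=\mathbb{F}_2$ yields $\mathbb{F}_2[G]\cdot x+I=\mathbb{F}_2[G]$, and nilpotency of $I$ then forces $\mathbb{F}_2[G]\cdot x=\mathbb{F}_2[G]$.

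The only potentially subtle step is the nilpotency of the augmentation ideal, which is exactly the reason the result is special to the multiquadratic setting: for $l$ prime to $|G|$, Maschke's theorem would make $\mathbb{F}_l[G]$ semisimple and the corresponding characterization of generators would look entirely different (compare the analysis at odd primes $l$ earlier in the paper).
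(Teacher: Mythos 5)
Your proof is correct, and it takes a different route from the paper's for the nontrivial direction. The paper argues by brute force: writing $x$ as a sum of an odd number $2k+1$ of group elements and using that every element of $G$ has order $2$ and the characteristic is $2$, it computes $x\cdot x=(2k+1)[g_1]=[g_1]$, so $x$ is its own inverse in $\mathbb{F}_2[G]$ and the $G$-module it generates contains the identity, hence all of $\mathbb{F}_2[G]$. You instead prove the structural statement that $\mathbb{F}_2[G]$ is local with nilpotent maximal ideal equal to the augmentation ideal (via $([g]-1)^2=0$ for the generators and the pigeonhole bound $I^{m+1}=0$), so any element of augmentation $1$ is a unit and generates the regular module; the ``only if'' direction via the augmentation map is the same in both treatments. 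The paper's computation is shorter and completely elementary, but is tailored to elementary abelian $2$-groups; your argument is a special case of the general fact that $\mathbb{F}_p[G]$ is local for any finite $p$-group $G$, so it isolates the real reason the lemma holds and would generalize beyond the multiquadratic setting (your closing remark contrasting with the semisimple case $l\nmid |G|$ is apt). Both proofs correctly identify that generating $\mathbb{F}_2[G]$ as a $G$-module amounts to generating it as an ideal, i.e.\ to $x$ being a unit, so there is no gap.
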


\begin{proof}
 Let $x= \sum_{i=1}^n x_i [g_i]$.  First suppose $\sum x_i \equiv 1 \mod 2$.  Then there are an odd number of nonzero coefficients of $x$.  Let $g_{i_1}, \ldots, g_{i_{2k+1}}$ be the $2k+1$ elements of $G$ with nonzero $x_{i_j}$ coefficients.  Then 
\begin{eqnarray*} 
x \cdot x &=& \sum_{j=1}^{2k+1} [g_{i_j}] \cdot \sum_{t=1}^{2k+1}[g_{i_t}] = \sum_{j=1}^{2k+1} \sum_{t=1}^{2k+1} [g_{i_j} \cdot g_{i_t}]\\
&=&  (2k+1)[g_1] + 2 \sum \sum_{j \neq t} [g_{i_j} \cdot g_{i_t}] = (2k+1)[g_1] + 0 = [g_1], 
\end{eqnarray*}
 since we are working in characteristic 2 and every element of $G$ has order 2.  Hence $x^2 = [g_1]$.  The action of $G$ gives us all elements of $\mathbb{F}_2[G]$ with exactly one nonzero coefficient, and adding these together will give us all elements of $\mathbb{F}_2[G]$ with exactly $2$, $3$, $\ldots$ , and $n$ nonzero coefficients.  Hence $x$ will generate $\mathbb{F}_2[G]$ as a $G-$module.  Conversely, if $\sum x_i \equiv 0 \mod 2$, then for any element $y$ in $\mathbb{F}_2[G]$, the coefficients of $x \cdot y$ will 
 always sum to zero, and so $x$ cannot generate all of $\mathbb{F}_2[G]$ as a $G-$module. 
 \end{proof}

By Lemma \ref{z2iso}, if $K$ has minimal index mod $p$, then there is an isomorphism of $\mathbb{F}_2[G]-$modules $\mO_K^{\times}/(\mO_K^{\times})^2 \rightarrow I/2I$.  Also, by Lemma \ref{torat2}, $I/2I$ is not isomorphic to $\mathbb{F}_2[G]$ as $G-$modules.  

\begin{lem}\label{prop:ok=f2}
 $\mO_K^{\times}/(\mO_K^{\times})^2 \cong \mathbb{F}_2[G]$ as $G-$modules.
 \end{lem}
 \begin{proof}

Consider the composition $$\mO_K^{\times}  \rightarrow \prod_{v|\infty} \mathbb{R}^{\times}/(\mathbb{R}^{\times})^2 \stackrel{\cong}{\longrightarrow} \mathbb{F}_2[G] \stackrel{Tr}{\longrightarrow}\mathbb{F}_2,$$ 
$$u \mapsto (\sigma_v (u))_{v|\infty}=((-1)^{t_v})_{v|\infty}\rightarrow \sum_{v|\infty} t_v [\sigma_v]\rightarrow \sum_{v|\infty} t_v, $$  
where we naturally identify the $n$ real infinite places $v$ with elements $\sigma_v$ of $G=\Gal(K/\mathbb{Q})$, and $Tr$ is the trace map.  This map factors through $\mO_K^{\times}/(\mO_K^{\times})^2$, giving a homomorphism of
$G-$modules $\mO_K^{\times}/(\mO_K^{\times})^2 \rightarrow \mathbb{F}_2[G]$.  By
assumption  there is a unit $\epsilon^{'}$ of $K$ of norm $-1$, and so the
image under the above map of $\epsilon^{'}$  is $1 \in
\mathbb{F}_2$.  By Lemma \ref{lemma:sum-1generates}, the
image of $\epsilon^{'}$ in $\mathbb{F}_2[G]$ generates
$\mathbb{F}_2[G]$.  This gives an epimorphism $\mO_K^{\times}/(\mO_K^{\times})^2
\rightarrow \mathbb{F}_2[G]$ onto a free module, which is an isomorphism since both
$\mO_K^{\times}/(\mO_K^{\times})^2 $ and $\mathbb{F}_2[G]$
have the same size.
\end{proof}

Thus if $p \equiv 1 \mod 4$, since $I/2I$ is not isomorphic to $\mathbb{F}_2[G]$, the map $\mO_K^{\times}/(\mO_K^{\times})^2 \rightarrow I/2I$ cannot be an isomorphism of $G-$modules.  By Lemma \ref{z2iso}, $K$ does not have minimal index mod $p$.   Next we turn to the case $p \equiv 3 \mod 4$.

\begin{lem}\label{3mod4true}
If $p \equiv 3 \mod 4$, then the map $\phi_2$ is an epimorphism.
\end{lem}
\begin{proof}
The image under $\psi$ of $\epsilon^{'}$ in $\prod_{i=1}^n (\mO_K/\p_i)^{\times}$ can be written 
$$(\epsilon^{'} \mod \p_1, \ldots , \epsilon^{'} \mod \p_n)= ((g_1(a))^{t_1}, \ldots , (g_n(a))^{t_n}),$$ 
where $a$ is a generator for $(\mO_K/\p)^{\times}$, and so $\prod_{i=1}^n g_i^{-1}(\epsilon^{'}) = N^K_{\mathbb{Q}}(\epsilon^{'}) =-1 \equiv \prod_{i=1}^n a^{t_i}=a^{\sum t_i} \mod \p$.  Since $a$ is a generator for $(\mO_K/\p)^{\times}$, $-1 \equiv a^{(p-1)/2} \mod \p$ and so $\displaystyle a^{(p-1)/2}\equiv a^{\sum_{i=1}^n t_i} \mod \p$.  Then $(p-1)/2\equiv \sum_{i=1}^n t_i \mod p-1$, implying $\sum_{i=1}^n t_i \equiv 1 \mod 2$ since $p \equiv 3 \mod 4$.  Thus 
$$\phi_2(\epsilon^{'})=\sum_{i=1}^n t_i [g_i] \in \{ x=\sum x_ig_i \in \mathbb{F}_2[G] | \sum x_i = 1 \}.$$ 
 By Lemma \ref{lemma:sum-1generates}, we see that  $\phi_2(\epsilon^{'})$ will generate $\mathbb{F}_2[G]$ as a $G-$module, and so $\phi_2$ is an epimorphism.
\end{proof}

We deduce that if $p \equiv 3 \mod 4$ it is always the case that the Frobenius element $\sigma_{\Q}$  of any prime $\Q$ of $K(\zeta_l)$ lying above $p$ generates $G_{2}=\Gal(K_{2}/K(\zeta_{2}))$ as a $G-$module.  We further conclude that the reformulation in Theorem \ref{reformulation} remains valid if we replace $K_2=K\left(\sqrt{\mO_K^{\times}}\right)$ with the simpler field $K(\sqrt{-1})$, since $p \equiv 3 \mod 4$ if and only if a prime ideal above $p$ generates $\Gal(K(\sqrt{-1})/K(\zeta_2))= \Gal(K(\sqrt{-1}/K))$ as a $G-$module. \\

Finally we prove that the condition $\mathbf{(G1)}$ is satisfied for every multiquadratic field $K$ containing a unit of norm $-1$.

\begin{thm}\label{G1satisfied}
If $\epsilon^{'}$ is a unit of norm $-1$, then $\gcd\left(\left[\mO_K^{\times}: \mathbb{Z}[G]\cdot \epsilon^{'} \right],2\right)=1.$
\end{thm}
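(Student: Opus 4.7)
The strategy is to show that the conjugates of $\epsilon'$ already generate $\mO_K^{\times}$ modulo squares, and then to conclude that the cokernel $Q := \mO_K^{\times}/(\mathbb{Z}[G]\cdot\epsilon')$ is a finite abelian group of odd order. Write $M := \mO_K^{\times}$ and $N := \mathbb{Z}[G]\cdot\epsilon'$, so the task is to prove $[M:N]$ is odd, equivalently that $Q/2Q = 0$.

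First I would identify $M/M^2$ with $\mathbb{F}_2[G]$ using Proposition \ref{prop:ok=f2}. Under the isomorphism constructed there (the norm map modulo squares, expressed additively through the signs $\sigma_v(u)$ at the $n$ real infinite places), the image of a norm-$(-1)$ unit has coefficient sum equal to $1 \in \mathbb{F}_2$. This is exactly what was used inside the proof of Proposition \ref{prop:ok=f2} to show surjectivity, so no new verification is needed. By Lemma \ref{lemma:sum-1generates}, any element of $\mathbb{F}_2[G]$ whose coefficients sum to $1$ generates $\mathbb{F}_2[G]$ as a $G$-module; hence the image of $N$ in $M/M^2$ already fills out the whole quotient. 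Rephrased multiplicatively, this means
\[
\mO_K^{\times} \;=\; (\mathbb{Z}[G]\cdot \epsilon')\cdot (\mO_K^{\times})^2,
\]
and additively (treating $\mO_K^{\times}$ as a finitely generated $\mathbb{Z}$-module) it says $Q = 2Q$.

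Finally I would conclude using the standard observation that a finitely generated abelian group $Q$ with $Q = 2Q$ must be finite of odd order: its free part $\mathbb{Z}^r$ is killed, since $2\mathbb{Z}^r \subsetneq \mathbb{Z}^r$ for $r\geq 1$, and its torsion part $T$ satisfies $2T = T$, which forces multiplication by $2$ to be a bijection and hence $|T|$ to be odd. This yields simultaneously that $[M:N]$ is finite and that it is coprime to $2$.

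All of the content sits in Proposition \ref{prop:ok=f2} together with Lemma \ref{lemma:sum-1generates}; once those are in hand the proof is essentially a one-line Nakayama-type argument. The only point worth flagging is that the multiquadratic hypothesis enters invisibly but crucially through Proposition \ref{prop:ok=f2}, so the theorem is genuinely stronger than an assertion about a general totally real field admitting a unit of norm $-1$.
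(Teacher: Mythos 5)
Your proposal is correct and is essentially the paper's argument: both proofs reduce to the fact that, via Proposition \ref{prop:ok=f2} and Lemma \ref{lemma:sum-1generates}, the image of $\mathbb{Z}[G]\cdot\epsilon'$ fills out $\mO_K^{\times}/(\mO_K^{\times})^2$, so the cokernel $Q$ satisfies $Q/2Q=0$. The paper then localizes at $2$ and invokes Nakayama where you invoke the structure theorem for finitely generated abelian groups, but this is only a cosmetic difference in the final step.
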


\begin{proof}
 Let $M= \mathbb{Z}[G]\cdot \epsilon^{'}$ and $N=\mO_K^{\times}$; then the sequence   
 $$ 0 \longrightarrow M \stackrel{i}{\longrightarrow} N \longrightarrow N/M \longrightarrow 0$$
 is exact.  Using Lemma  \ref{prop:ok=f2} we deduce that the map $i$ is an isomorphism mod $2$.  By Nakayama's lemma, this implies that $\left(N/M\right)_{(2)}=0$.  Thus $N/M$ is torsion as a $\mathbb{Z}-$module and has order coprime to $2$.   
\end{proof}

 Combining this with the previous results of this section, we can now restate Theorem \ref{reformulation} for multiquadratic fields.\\

\begin{thm}\label{mqreformulation}  Let $K_l = K\left(\zeta_l, \sqrt[l]{\mO_K^{\times}}\right)$ if $l \neq 2$ is prime and let $K_2=K(\sqrt{-1})$.
For a prime $p \in \mathbb{Z}$ completely split in $K$, $K$ has minimal index mod $p$ if and only if for every $l|p-1$, the Frobenius element $\sigma_{\Q}$  of any prime $\Q$ of $K(\zeta_l)$ lying above $p$ generates $G_{l}=\Gal(K_{l}/K(\zeta_{l}))$ as a $G-$module.  
\end{thm}

\subsection{An equivalent condition on Frobenius elements for $l \neq 2$}

Fix a prime $l \neq 2$.  We will further reformulate the condition on the Frobenius elements $\sigma_{\Q}$ given in Theorem \ref{reformulation}.  Let $Q_i$, $ 1 \leq i \leq n-1$ denote the $n-1$ quadratic subfields of $K$.  For each $i$, let $H_i$ denote the index two subgroup of $G$ corresponding to the fixed field $Q_i$.  Then $H_i = \Gal(K/Q_i)$ and $Q_i=K^{H_i}$.  Since $l \nmid |G|=2^m$, $G_{l} \cong \oplus_{i=1}^{n-1} V_{i} $ as $G-$modules, where each $V_{i}$ is a one dimensional irreducible $G$-module.  Explicitly, since the action of an element $\sigma \in G_l$ on $K_{l}=K(\zeta_{l}, \sqrt[l]{\epsilon_{1}}, \ldots , \sqrt[l]{\epsilon_{n}})$
 is given by $\sigma(\sqrt[l]{\epsilon_k})= \zeta_{l}^{a_{k}} \sqrt[l]{\epsilon_k}$, for each $i$, $V_{i}= \mathbb{F}_{l} \left[ \sum_{h \in H_i} h - \sum_{g \notin H_i} g \right]$.  Let $\pi_{i} : G_{l} \rightarrow V_{i}$ denote the natural projection map onto the irreducible submodule $V_{i}$.  This map has kernel isomorphic to $\displaystyle  M_i:=\oplus_{j \neq i} V_{j}= \Gal(K_{l}/K_{l}^{M_i})$, and so $\sigma \in G_{l}$, $\pi_{i}(\sigma)=0$ if and only if $\sigma \in  \oplus_{j \neq i} V_{j}$, or equivalently $\sigma$ is the identity on the field $K_{l}^{M_i}$. Define $u_{i} := \prod_{h \in H_i}h(\epsilon) \in Q_i \subseteq K$ for $ 1 \leq i \leq n-1$,  where $\epsilon$ is the unit for which $K_{l}=K(\zeta_{l}, \sqrt[l]{\epsilon_{1}}, \ldots , \sqrt[l]{\epsilon_{n}})$ given by Lemma \ref{Washington}.\\

\begin{lem}
For every $1 \leq i \leq n-1$, $K_l^{M_i} =K(\zeta_{l}, \sqrt[l]{u_i})$.
\end{lem}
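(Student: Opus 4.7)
The plan is to construct an explicit $l$-th root of $u_i$ inside $K_l$, show that its stabilizer in $G_l$ is exactly $M_i$, and then conclude by matching degrees.

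First I would set $\alpha_i := \prod_{k \, : \, g_k \in H_i} \sqrt[l]{\epsilon_k} \in K_l$. Because $G \cong (\mathbb{Z}/2\mathbb{Z})^m$ is elementary abelian of exponent $2$, we have $H_i^{-1} = H_i$, so
$$\alpha_i^l = \prod_{g_k \in H_i} \epsilon_k = \prod_{g_k \in H_i} g_k^{-1}(\epsilon) = \prod_{h \in H_i} h(\epsilon) = u_i.$$
Thus $\alpha_i$ is an $l$-th root of $u_i$, and since any two $l$-th roots of $u_i$ differ by a power of $\zeta_l \in K(\zeta_l)$, we have $K(\zeta_l,\alpha_i) = K(\zeta_l, \sqrt[l]{u_i})$.

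Next I would show $K(\zeta_l,\alpha_i) \subseteq K_l^{M_i}$ by a short character computation. For $\sigma \in G_l$ with $\sigma(\sqrt[l]{\epsilon_k}) = \zeta^{a_k}\sqrt[l]{\epsilon_k}$, we have $\sigma(\alpha_i) = \zeta^{S_i}\alpha_i$ where $S_i := \sum_{g_k \in H_i} a_k$. Under the isomorphism $f : G_l \to B_l$ of Proposition \ref{glandblareiso}, $f(\sigma) = \sum_k a_k[g_k]$, and the submodule $M_i = \bigoplus_{j \neq i} V_j$ corresponds precisely to the kernel of the character $\chi_i : \mathbb{F}_l[G] \to \mathbb{F}_l$ sending $g \mapsto +1$ for $g \in H_i$ and $-1$ otherwise (this uses that $V_i$ is the $\chi_i$-isotypic line, with idempotent a scalar multiple of $\sum_{h \in H_i} h - \sum_{g \notin H_i} g$). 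Direct evaluation gives $\chi_i(f(\sigma)) = \sum_{g_k \in H_i} a_k - \sum_{g_k \notin H_i} a_k = 2 S_i$, using $\sum_k a_k = 0$ which holds since $f(\sigma) \in B_l$. Because $l \neq 2$, this vanishes precisely when $S_i = 0$, i.e. exactly when $\sigma$ fixes $\alpha_i$. Hence $\sigma$ fixes $\alpha_i$ iff $\sigma \in M_i$, giving the desired inclusion.

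To conclude, I would match degrees. By Galois theory, $[K_l^{M_i} : K(\zeta_l)] = [G_l : M_i] = |V_i| = l$, since $V_i$ is one-dimensional over $\mathbb{F}_l$. Conversely, if $\alpha_i$ were in $K(\zeta_l)$, then every $\sigma \in G_l$ would fix $\alpha_i$, forcing $M_i = G_l$ and contradicting $[G_l : M_i] = l$; since $l$ is prime, this means $[K(\zeta_l, \alpha_i) : K(\zeta_l)] = l$. The two degrees agreeing forces the inclusion above to be an equality. The main subtle point is the identification of $M_i$ with $\ker \chi_i$ on $B_l$ via the idempotent description of $V_i$; once that is in place, the remainder is routine computation with the explicit generators $\sqrt[l]{\epsilon_k}$ of $K_l$.
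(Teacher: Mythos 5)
Your proof is correct and follows essentially the same route as the paper: both take $\sqrt[l]{u_i}=\prod_{g_k\in H_i}\sqrt[l]{\epsilon_k}$, show it is fixed by $M_i$ (your character computation with $\chi_i$ is a repackaging of the paper's count that exactly half of the elements of $H_i$ lie in each $H_j$, $j\neq i$), and conclude by comparing degrees over $K(\zeta_l)$. A small bonus of your version is that identifying the stabilizer as exactly $M_i$ also justifies $[K(\zeta_l,\sqrt[l]{u_i}):K(\zeta_l)]=l$, a point the paper leaves implicit.
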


\begin{proof}
$[K_l^{M_i} : K(\zeta_l)] = l$, so the extension is cyclic generated by an $l^{th}$ root.  We show that $\sqrt[l]{u_i} \in K_l^{M_i}$. Consider for any $i \neq j$ the containment $ H_i \leq H_i H_j \leq G$.  The first containment is proper because $i \neq j$.  Since $[G:H_i]=2$, $H_i H_j = G$.  Then $\displaystyle | H_i\cap H_j|= \frac{|H_i||H_j|}{|H_i H_j|} = 2^{m-2}$, and so for every $i \neq j$, $[H_j: H_i \cap H_j] = 2$.  Then for any $\sigma \in V_j=\mathbb{F}_l[\sum_{h\in H_i} h- \sum_{g \notin H_i g} g ]$, say $\sigma = a(\sum_{h\in H_i} h- \sum_{g \notin H_i} g)$, we have
$$  \sigma(\sqrt[l]{u_i})=\sigma\left(\prod_{h\in H_i} \sqrt[l]{h(\epsilon)}\right) 
  =\prod_{h \in H_i \cap H_j} \zeta_l^a \sqrt[l]{h(\epsilon)} \cdot \prod_{h \in H_i \setminus H_j} \zeta_l^{-a} \sqrt[l]{h(\epsilon)}=  \sqrt[l]{u_i}.  $$
\end{proof}

For each $1 \leq i \leq n-1$, let $\qB_{i}:= \Q \cap \mO_{Q_i(\zeta_l)}$; then $\qB_{i}$ is a completely split prime of $Q_{i}(\zeta_{l})$  that $\Q$ lies above.  

\begin{lem}\label{lemma:quadsub}
$\sigma_{\Q}$ generates $G_{l}$ as a $G$-module if and only if for all $1 \leq i \leq n-1$, $(\qB_{i}, Q_{i}(\zeta_{l}, \sqrt[l]{u_{i}})/Q_{i}(\zeta_{l})) \neq (id)$.
\end{lem}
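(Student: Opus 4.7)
The plan is to use the $G$-module decomposition $G_l \cong \bigoplus_{i=1}^{n-1} V_i$ to reduce the generation condition to a collection of Frobenius conditions, one per quadratic subfield. Since the $V_i$ are pairwise non-isomorphic one-dimensional irreducible $\mathbb{F}_l[G]$-modules, every $G$-submodule of $G_l$ is a direct sum of some subset of the $V_i$. Writing $\sigma_{\Q} = \sum_i v_i$ with $v_i = \pi_i(\sigma_{\Q}) \in V_i$, the $G$-submodule generated by $\sigma_{\Q}$ is exactly $\bigoplus_{\{i:\,v_i \neq 0\}} V_i$. Hence $\sigma_{\Q}$ generates $G_l$ as a $G$-module if and only if $\pi_i(\sigma_{\Q}) \neq 0$ for every $1 \leq i \leq n-1$.

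Next I translate the condition $\pi_i(\sigma_{\Q}) \neq 0$ into a statement about a Frobenius over $K(\zeta_l)$. By construction $\ker \pi_i = M_i = \Gal(K_l/K_l^{M_i})$, and the previous lemma identifies $K_l^{M_i} = K(\zeta_l, \sqrt[l]{u_i})$. Therefore $\pi_i(\sigma_{\Q}) \neq 0$ holds precisely when $\sigma_{\Q}$ does not lie in $\Gal(K_l/K(\zeta_l, \sqrt[l]{u_i}))$, equivalently when the restriction of $\sigma_{\Q}$ to $K(\zeta_l, \sqrt[l]{u_i})$, which is the Frobenius $(\Q, K(\zeta_l, \sqrt[l]{u_i})/K(\zeta_l))$, is nontrivial.

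The remaining step is to pass from this Frobenius over $K(\zeta_l)$ to the desired Frobenius $\text{Frob}(\qB_i, Q_i(\zeta_l, \sqrt[l]{u_i})/Q_i(\zeta_l))$ over the quadratic subfield. Set $F_i = Q_i(\zeta_l, \sqrt[l]{u_i})$. Since the previous lemma gives $[K(\zeta_l, \sqrt[l]{u_i}) : K(\zeta_l)] = l$ and $K(\zeta_l, \sqrt[l]{u_i}) = K(\zeta_l) \cdot F_i$, the field $F_i$ must have degree $l$ over $Q_i(\zeta_l)$ and be linearly disjoint from $K(\zeta_l)$ over $Q_i(\zeta_l)$. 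Restriction therefore induces an isomorphism
\[
\Gal\bigl(K(\zeta_l, \sqrt[l]{u_i})/K(\zeta_l)\bigr) \xrightarrow{\ \sim\ } \Gal\bigl(F_i/Q_i(\zeta_l)\bigr).
\]
Because $p$ splits completely in $K(\zeta_l)/\mathbb{Q}$ we have $N(\Q) = N(\qB_i) = p$, so the Frobenius on each side acts as $x \mapsto x^p$ on the (common) residue field $\mathbb{F}_p$. Consequently the isomorphism above sends $(\Q, K(\zeta_l, \sqrt[l]{u_i})/K(\zeta_l))$ to $\text{Frob}(\qB_i, F_i/Q_i(\zeta_l))$, and the nontriviality of one is equivalent to the nontriviality of the other.

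The main obstacle is the last step: one must verify that $F_i/Q_i(\zeta_l)$ is genuinely of degree $l$ (so that $\sqrt[l]{u_i} \notin Q_i(\zeta_l)$) and that the compositum $F_i \cdot K(\zeta_l)$ equals $K(\zeta_l, \sqrt[l]{u_i})$ with trivial intersection $F_i \cap K(\zeta_l) = Q_i(\zeta_l)$, so that base change cleanly identifies Galois groups. Both facts follow from the preceding lemma's computation of $[K(\zeta_l,\sqrt[l]{u_i}):K(\zeta_l)] = l$ together with the fact that $[K(\zeta_l):Q_i(\zeta_l)] \leq 2$ is coprime to $l$. Assembling the three steps yields the equivalence.
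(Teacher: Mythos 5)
Your argument is correct and follows essentially the same route as the paper: you decompose $G_l$ into the distinct one-dimensional irreducibles $V_i$, identify $\ker \pi_i$ with $\Gal(K_l/K(\zeta_l,\sqrt[l]{u_i}))$ via the preceding lemma, and then compare the two Frobenius elements, with the only variation in the final descent step, which the paper handles by an order argument (both Frobenius elements have $l$-power order differing by a factor dividing $[K:Q_i]$) while you use the restriction isomorphism coming from linear disjointness together with $N(\Q)=N(\qB_i)=p$. One small correction: $[K(\zeta_l):Q_i(\zeta_l)]$ is $2^{m-1}$, not at most $2$, but since your argument only uses that this degree is coprime to $l$, nothing is affected.
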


\begin{proof}
Since $G_{l} \cong  \oplus_{i=1}^{n-1} V_{i} $ as $G$-modules with $V_{i}$ irreducible $G$-modules, $\sigma_{\Q}$ generates $G_{l}$ as a $G$-module if and only if $\sigma_{\Q} |_{K_{l}^{M_i}} \neq (id)$ for all $1 \leq i \leq n-1$.  Now, 
\begin{eqnarray*} 
\sigma_{\Q} |_{K_{l}^{M_i}} &=& (\Q, K_{l}/K(\zeta_{l}))|_{K_{l}^{M_i}} \\
&=& (\Q, K_{l}^{M_i}/K(\zeta_{l})) \\
&=& (\Q, K(\zeta_{l}, \sqrt[l]{u_i})/K(\zeta_{l})).
\end{eqnarray*}
Thus $\sigma_{\Q}$ generates $G_{l}$ as a $G$-module if and only if for every $ 1 \leq i \leq n-1$, $(\Q, K(\zeta_{l}, \sqrt[l]{u})/K(\zeta_{l})) \neq (id)$.  The element $u_{i} = \prod_{h \in H_i}h(\epsilon) \in Q_i=K^{H_i} $ since $u_i$ is fixed by every element of $H_i$.  Then $(\Q, K(\zeta_{l}, \sqrt[l]{u_{i}})/K(\zeta_{l})) \neq (id)$ if and only if the Frobenius element $(\qB_{i}, Q_{i}(\zeta_{l}, \sqrt[l]{u_{i}})/Q_{i}(\zeta_{l})) \neq (id)$, since both elements have order a power of $l$ and their order differs by at most $2^{m-1}=[K:Q_{i}]$.  
\end{proof}

\section{Application of analytic number theory}\label{proofs}

\subsection{Classical argument}

Recall that $K$ is a totally real multiquadratic field containing a unit of norm $-1$.  We will adapt techniques originally given by Hooley \cite{H} in his proof of Artin's conjecture on primitive roots to our setting.  Lemma \ref{lemma:quadsub} allows us to convert conditions pertaining to the multiquadratic field $K$ into conditions pertaining to its quadratic subfields.  Several of the proofs, including those of Lemmas \ref{M3lemma} and \ref{M_1}, are then modifications of Roskam's results for quadratic fields (see \cite{R2}).  These two lemmas will not directly generalize to other number fields $K$.  We fix notation which we will use throughout the rest of this section.  \\

\begin{nota}
 Let $l$ denote a prime number.  Let $r$ and $k$ denote positive integers, with $k$ squarefree. 
 \end{nota}
 
In what follows we disregard the prime $p=2$ since we are only interested in the asymptotic behavior of $p$.  Recall $K_l=K\left(\zeta_l, \sqrt[l]{\mO_K^{\times}}\right)$ for $l \neq 2$ and $K_2 = K(\sqrt{-1})$.  For $p \in \mathbb{Z}$ a prime that splits completely in $K$, let $R(p,l)=1$ if $l|p-1$ and the Frobenius element $\sigma_{\Q}$ does not generate $G_{l}=\Gal(L_l/K(\zeta_l))$ as a $G-$module for any $\Q|p$ of $K(\zeta_l)$, and 0 otherwise.  Define the following for $x$, $\delta$, $\delta_{1}$, $\delta_{2}  \in \mathbb{R}$.
$$ N(x, \delta)= \left| \{ p \text{ split in } K/\mathbb{Q} : p \leq x, R(p, l)=0 \ \forall  \ l \leq \delta \} \right|$$

$$ N(x)=N(x, x-1)= \left| \{ p \text{ split in } K/\mathbb{Q} : p \leq x, R(p, l)=0 \ \forall  \ l \leq x-1 \} \right|$$

$$ P(x, k)= \left| \{ p \text{ split in } K/\mathbb{Q} : p \leq x, R(p, l)=1 \ \forall  \ l|k \} \right|$$

$$ M(x, \delta_{1}, \delta_{2})= \left| \{ p \text{ split in } K/\mathbb{Q} : p \leq x, R(p, l)=1  \ \exists  \ l \in (\delta_{1}, \delta_{2}\right]   \} | $$

$$ S = \{ \text{primes } p \text{ completely split in } K/\mathbb{Q} :\forall l| p-1, \sigma_{\Q} \text{ generates } G_{l} \text{ as a } G- \text{module } \text{ for any } \Q|p \text{ of }K(\zeta_l) \} $$

By Theorem \ref{mqreformulation}, $K$ has minimal index mod $p$ if and only if $p$ is counted in $S$, and so we wish to show that the set $S$ has a positive density
 $$\delta(S) =\lim_{x \rightarrow \infty} \frac{  \left| \{ p \in S : p \leq x \} \right|}{\pi (x)}= \lim_{x \rightarrow \infty} \frac{N(x)}{\pi (x)}$$
 in the set of all primes, where $\pi(x)$ denotes the number of prime numbers $p$ less than or equal to $x$.   We will do so by investigating the limit as $ x \rightarrow \infty$ of  the ratio $\displaystyle \frac{N(x)}{x/\log x}$.  \\
 
 Define $M= 2n+3$, where $n=[K:\mathbb{Q}]$; note that $M$ only depends on the degree of the multiquadratic field $K$.  Let 
$$\xi_1(x)= \frac{1}{2M} \log{(x)}, \text{ } \displaystyle \xi_2(x)= \frac{\sqrt{x}}{\log^2{x}}, \text{ and } \xi_3(x)= \sqrt{x}\log{x}.$$  
In addition, define $M_1(x) := M(x, \xi_1(x), \xi_2(x)) $, $ M_2(x):= M(x, \xi_2(x), \xi_3(x))$, and $M_{3}(x) :=   M(x, \xi_{3}(x), x-1) $.  As in the argument of Hooley, we deduce
$$\displaystyle N(x)= N(x, \xi_{1}(x)) + O(M(x, \xi_1(x), x-1)).$$
Also, we trivially have 
\begin{eqnarray*}
M(x, \xi_{1}(x), x-1) &\leq& M(x, \xi_{1}(x), \xi_{2}(x)) +  M(x, \xi_{2}(x), \xi_{3}(x)) +  M(x, \xi_{3}(x), x-1)\\
&=& M_1(x) + M_2(x) + M_3(x).\\
\end{eqnarray*}

\begin{lem}\label{M3lemma}
$\displaystyle M_{3}(x)  = O \left( \frac{x}{\log^{2}x}\right).$
\end{lem}

\begin{proof}
If $p$ is counted in $M_{3}(x)$, then for some $l$, $ x-1 \geq l \geq \xi_{3}(x)$, $l|p-1$ and for a prime $\Q|p$ of $K(\zeta_l)$, $\sigma_{\Q}$ does not generate $G_{l}$ as a $G$-module.  Since we are interested in the asymptotic behavior of $x$, we may assume that $\xi_3(x)$ is not small, and so $l \neq 2$.  By Lemma \ref{lemma:quadsub}, this implies that for some $1 \leq i \leq n-1$, $(\qB_{i}, Q_{i}(\zeta_{l}, \sqrt[l]{u_{i}})/Q_{i}(\zeta_{l}))=(id)$, so $\qB_{i}=\Q \cap \mO_{Q_i(\zeta_l)}$ is a prime of $Q_i(\zeta_l)$ that splits completely in $Q_{i}(\zeta_{l}, \sqrt[l]{u_{i}})$, where $\displaystyle u_{i} = \prod_{h \in H_i}h(\epsilon) \in Q_i$.  Hence $u_i^{(p-1)/l} - 1 \in \qB_i$; since also $u_i^{(p-1)/l} - 1 \in \mO_{Q_i}$, $u_i^{(p-1)/l} - 1 \in \qB_i \cap \mO_{Q_i}=:\q_i$ for $\q_i$ a prime of $Q_{i}$.  Thus $\q_i | u_{i}^{(p-1)/l} - 1$, and so $N_{\mathbb{Q}}^{Q_i}(\q_i)=p \leq N_{\mathbb{Q}}^{Q_i}\left(u_{i}^{(p-1)/l}\right)$.  Now, since $ x-1 \geq l \geq \xi_{3}(x)=\sqrt{x} \log x$ and $p\leq x$, we have 

$$ \frac{1}{l} \leq \frac{1}{\sqrt{x} \log x}, \text{ and so } \frac{p-1}{l} \leq \frac{p-1}{\sqrt{x}\log x} \leq \frac{x}{\sqrt{x}\log x} = \frac{\sqrt{x}}{\log x}.$$  Thus every such $p$ counted in $M_{3}(x)$ divides $\displaystyle \prod_{r<\frac{\sqrt{x}}{\log x}} \prod_{i=1}^{n-1} N_{\mathbb{Q}}^{K_i}(u_{i}^{r} - 1)$, where $r \in \mathbb{N}$.  Each $p$ is at least 2, so $\displaystyle 2^{ M_{3}(x)} \leq \prod_{r<\frac{\sqrt{x}}{\log x}} \prod_{i=1}^{n-1} |N_{\mathbb{Q}}^{Q_i}(u_{i}^{r} - 1)|$.  Let $A_{1} > \max_{i, g_{j}} |g_{j}(u_{i})|$; notice that since there are only finitely many distinct $\epsilon$ for varying $l$, there are only finitely many distinct $u_i$ and so $A_1$ is finite.  For each $i$, $\displaystyle |N_{\mathbb{Q}}^{Q_i}(u_{i}^{r} - 1)| < (A_{1}^{r} + 1)^{2}$.  Thus $$ \prod_{i=1}^{n-1} |N_{\mathbb{Q}}^{Q_i}(u_{i}^{r} - 1)| < (A_{1}^{r} + 1)^{2(n-1)} < A_{2}^{2r},$$
 where $A_{2} > (A_{1} + 1)^{n-1}$, and $A_{1}, A_{2} =O(1)$.  Hence
$$  2^{ M_{3}(x)} \leq \prod_{r<\frac{\sqrt{x}}{\log x}} \prod_{i=1}^{n-1} |N_{\mathbb{Q}}^{Q_i}(u_{i}^{r} - 1)| < \prod_{r<\frac{\sqrt{x}}{\log x}} A_{2}^{2r}.$$ 

Taking logs, we have 
\begin{eqnarray*}
  M_{3}(x) &\leq& \sum_{r<\frac{\sqrt{x}}{\log x}} \frac{\log(A_{2}^{2r})}{\log 2} = \sum_{r<\frac{\sqrt{x}}{\log x}} \frac{2r\log(A_{2})}{\log 2}\\ 
&=& \frac{2\log(A_{2})}{\log 2}\sum_{r<\frac{\sqrt{x}}{\log x}} r 
 \leq \frac{2\log(A_{2})}{\log 2}\sum_{r<\frac{\sqrt{x}}{\log x}} \frac{\sqrt{x}}{\log x}\\
 &\leq& \frac{2\log(A_{2})}{\log 2} \frac{\sqrt{x}}{\log x} \frac{\sqrt{x}}{\log x} =  \frac{2\log(A_{2})}{\log 2} \frac{x}{\log^{2}(x)}.
\end{eqnarray*}
Hence $\displaystyle M_3(x)  = O\left( \frac{x}{\log^{2}x}\right)$.
\end{proof}

\begin{lem}
$\displaystyle M_2(x) = O \left( \frac{x \log\log x}{\log^2 x}\right)$.
\end{lem}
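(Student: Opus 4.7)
The plan is to bound $M_2(x)$ by ignoring the generation condition entirely and simply counting, for each prime $l$ in the middle range, those $p \le x$ satisfying the cheap necessary condition $l \mid p-1$. Concretely, if $p$ is counted in $M_2(x)$ then there is a prime $l$ with $\xi_2(x) < l \le \xi_3(x)$ for which $R(p,l)=1$; by the definition of $R$, this forces $l \mid p-1$. Dropping the split-in-$K$ condition (it only makes the set smaller) gives
\[
M_2(x) \;\le\; \sum_{\xi_2(x) < l \le \xi_3(x)} \pi(x;\,l,\,1),
\]
where $\pi(x;l,1)$ is the usual count of primes $p \le x$ with $p \equiv 1 \pmod l$.

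Next I would invoke the Brun--Titchmarsh inequality
\[
\pi(x;\,l,\,1) \;\ll\; \frac{x}{\phi(l)\,\log(x/l)} \;\ll\; \frac{x}{l\,\log(x/l)},
\]
valid for all $l \le x$. The crucial point is that on the range in question $l \le \xi_3(x) = \sqrt{x}\log x$, so $x/l \ge \sqrt{x}/\log x$, whence
\[
\log(x/l) \;\ge\; \tfrac{1}{2}\log x - \log\log x \;\gg\; \log x
\]
for $x$ sufficiently large. Consequently $\pi(x;l,1) \ll x/(l\log x)$ uniformly in $l$ on the whole middle range, and
\[
M_2(x) \;\ll\; \frac{x}{\log x}\sum_{\xi_2(x) < l \le \xi_3(x)} \frac{1}{l}.
\]

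It remains to estimate the sum of reciprocals of primes in the window. By Mertens' theorem,
\[
\sum_{\xi_2 < l \le \xi_3}\frac{1}{l} \;=\; \log\log \xi_3(x) - \log\log \xi_2(x) + O\!\left(\frac{1}{\log \xi_2(x)}\right).
\]
Since $\log \xi_3(x) = \tfrac{1}{2}\log x + \log\log x$ and $\log \xi_2(x) = \tfrac{1}{2}\log x - 2\log\log x$, the ratio $\log\xi_3/\log\xi_2 = 1 + O(\log\log x/\log x)$, giving
\[
\log\log \xi_3(x) - \log\log \xi_2(x) \;=\; \log\!\left(\frac{\log \xi_3}{\log \xi_2}\right) \;=\; O\!\left(\frac{\log\log x}{\log x}\right),
\]
and the error term from Mertens is $O(1/\log x)$, which is absorbed. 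Combining yields
\[
M_2(x) \;\ll\; \frac{x}{\log x} \cdot \frac{\log\log x}{\log x} \;=\; O\!\left(\frac{x \log\log x}{\log^2 x}\right),
\]
as claimed.

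The only step with any subtlety is checking that the Brun--Titchmarsh bound remains of the expected shape throughout the range $l \le \sqrt{x}\log x$; this is precisely where the choice $\xi_3(x)=\sqrt{x}\log x$ (rather than anything larger) is tuned, ensuring that $\log(x/l)$ does not degenerate. Everything else is bookkeeping: a trivial union bound, Brun--Titchmarsh, and a Mertens-type estimate of the sum of $1/l$ over a short dyadic-like window whose logarithmic length is $O(\log\log x / \log x)$.
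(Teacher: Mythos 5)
Your proof is correct and follows essentially the same route as the paper: bound $M_2(x)$ by $\sum_{l \in (\xi_2(x),\xi_3(x)]} P(x,l)$ using only the condition $l \mid p-1$, apply Brun--Titchmarsh with $\log(x/l) \gg \log x$ on this range, and finish with a Mertens-type estimate of the prime sum over the window. The only cosmetic difference is that you apply Mertens' second theorem directly to $\sum 1/l$, whereas the paper first converts to $\sum (\log l)/l$ via $1 \le 4\log l/\log x$; both give the same $O(\log\log x/\log x)$ bound.
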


\begin{proof}
If $p$ is counted in $M_2(x)$, then there is a prime $l_0 \in (\xi_2(x), \xi_3(x)]$ such that $R(p, l_0) =1$, and so $p$ will be counted in $P(x, l_0)$.  Thus $$ M_2(x) \leq \sum_{l \in (\xi_2(x), \xi_3(x)]} P(x, l).$$
  Now, for any $l$, $P(x,l) \leq   \left| \{ p \leq x : p \equiv 1 \mod l \} \right|$, since the primes counted in $R(p, l)$ satisfy additional conditions.  Now, for each $l$ prime, the Brun-Titchmarsh Theorem states that 
$$\left| \{ p \leq x \text{ prime }: p \equiv 1 \mod l \} \right| \leq \frac{bx}{\phi(l) \log(\frac{x}{l})} =  \frac{bx}{(l-1) \log(\frac{x}{l})}$$
for some fixed absolute constant $b$, and so we deduce that $\displaystyle M_2(x) \leq \sum_{l \in (\xi_2(x), \xi_3(x)]} \frac{bx}{(l-1) \log(\frac{x}{l})}$.  \\

Now notice that $\displaystyle \frac{x}{(l-1)\log(\frac{x}{l})} \leq  \frac{8x}{l \log x}$ for all $l \in (\xi_2(x), \xi_3(x)]$.  To see this, note that since \\ $l \leq \xi_3(x) = \sqrt{x}\log x $,  $\displaystyle x/l \geq \sqrt{x}/\log x $, and so $  \left(x/l\right)^4 \geq x^2/\log^4 x \geq x.$ Thus 
$$ \log x \leq \log \left(\left(x/l\right)^4\right) = 4 \log \left(x/l\right) \text{, or equivalently }  \frac{1}{\log(x/l)} \leq \frac{4}{\log x} \text{.  Then }  \frac{x}{(l-1)\log(\frac{x}{l})} \leq \frac{8x}{l \log x},$$
 and so we find that 
$$M_2(x) \leq  \sum_{l \in (\xi_2(x), \xi_3(x)]} \frac{bx}{(l-1) \log(\frac{x}{l})} \leq \sum_{l \in (\xi_2(x), \xi_3(x)]} \frac{8bx}{l\log x},$$ and so $\displaystyle M_2(x) = O \left( \frac{x}{\log x} \sum_{l \in (\xi_2(x), \xi_3(x)]} \frac{1}{l}\right) $.\\

Next we claim that 
$$ \frac{x}{\log x} \sum_{l \in (\xi_2(x), \xi_3(x)]} \frac{1}{l} = O\left(  \frac{x}{\log^2 x} \sum_{l \in (\xi_2(x), \xi_3(x)]} \frac{\log l}{l}\right) .$$
  Since $\displaystyle l \geq \sqrt{x}/\log^2 x$ for every $l \in (\xi_2(x), \xi_3(x)]$, $\displaystyle l^4 \geq x^2/\log^8 x \geq x$, and so $\log x \leq \log (l^4) = 4 \log l$ or equivalently $\displaystyle 1 \leq 4 \log l/\log x$.  Hence 
\begin{eqnarray*}
\frac{x}{\log x} \sum_{l \in (\xi_2(x), \xi_3(x)]} \frac{1}{l} &\leq& \frac{4x}{\log x} \sum_{l \in (\xi_2(x), \xi_3(x)]} \frac{\log l}{l \log x}\\
 &=& \frac{4x}{\log^2 x} \sum_{l \in (\xi_2(x), \xi_3(x)]} \frac{\log l}{l},\\
\end{eqnarray*}
and so $\displaystyle \frac{x}{\log x} \sum_{l \in (\xi_2(x), \xi_3(x)]} \frac{1}{l} = O\left(  \frac{x}{\log^2 x} \sum_{l \in (\xi_2(x), \xi_3(x)]} \frac{\log l}{l}\right) $.  Thus 
$$M_2(x) = O\left( \frac{x}{\log^2 x} \sum_{l \in (\xi_2(x), \xi_3(x)]} \frac{\log l}{l}\right).$$

We now can use a theorem of Mertens to bound the last sum.  Define 
$$ S(x):=\sum_{l \in (\xi_2(x), \xi_3(x)]} \frac{\log l}{l}.$$  
Mertens theorem states that $\displaystyle \log x - \sum_{l \leq x \text{ prime}} \frac{\log l}{l} = O(1)$ as $ x \rightarrow \infty$.  Thus there exists a constant $t >0$ (independent of $x$) such that $\displaystyle \left|\log(\xi_3(x)) - S(x) - \sum_{l \leq \xi_2(x)} \frac{\log l}{l}\right| \leq t$ and $\displaystyle \left| \log(\xi_2(x)) - \sum_{l \leq \xi_2(x)} \frac{\log l}{l}\right| \leq t$.  Hence $$\left|\log(\xi_3(x)) -\log(\xi_2(x)) - S(x)\right|  =\left|(\log(\xi_3(x)) - S(x) - \sum_{l \leq \xi_2(x)} \frac{\log l}{l}) -(\log(\xi_2(x)) - \sum_{l \leq \xi_2(x)} \frac{\log l}{l})\right|$$ $$\leq \left|\log(\xi_3(x)) - S(x) - \sum_{l \leq \xi_2(x)} \frac{\log l}{l}\right| + \left| \log(\xi_2(x)) - \sum_{l \leq \xi_2(x)} \frac{\log l}{l}\right| \leq 2t$$\\
by the triangle inequality.  Thus $\displaystyle S(x)\leq \log(\xi_3(x))- \log (\xi_2(x)) + 2t= \log\left(\frac{\xi_3(x)}{\xi_2(x)}\right) + 2t$.  Now, $$ \frac{\xi_3(x)}{\xi_2(x)} = (\sqrt{x}\log x)\cdot \frac{\log^2 x}{\sqrt{x}}=\log^3 x,$$
 and so $\displaystyle \log\left(\frac{\xi_3(x)}{\xi_2(x)}\right)= \log \log^3 x = 3 \log\log x$.  This implies $$ S(x) \leq  \log\left(\frac{\xi_3(x)}{\xi_2(x)}\right) + 2t = 3 \log \log x + 2t \leq h \log \log x$$ for some absolute constant $h$.  Thus we can conclude that 
$$ M_2(x)= O\left(  \frac{x}{\log^2 x} S(x)\right)  =O\left(\frac{x \log \log x}{\log^2 x} \right)\text{, as claimed.}$$ 
\end{proof}

Because $\displaystyle \frac{x}{\log^2 x} =O\left( \frac{x\log\log x}{\log^2 x}\right)$, we have

\begin{eqnarray*}
 N(x)&=& N(x, \xi_{1}(x)) + O(M(x, \xi_{1}(x), x-1)) \\
 	&=& N(x, \xi_1(x)) + O(M_1(x)) + O\left(\frac{x \log \log x}{\log^2 x} \right). \hspace{200pt} \mathbf{(1)}
\end{eqnarray*} 

\subsection{Completion of proof using the generalized Riemann hypothesis}

From $\mathbf{(1)}$ we see that to estimate $N(x)$ it remains to estimate both $N(x, \xi_1(x))$ and $M_1(x)$.  As with $M_2(x)$, we have $\displaystyle M_1(x) \leq \sum_{l \in (\xi_1(x), \xi_2(x)]} P(x, l)$.  Also, by the inclusion/exclusion principle we have 
$$\displaystyle N(x, \xi_1(x)) = \sum_{k \in U(x)} \mu(k)P(x,k),$$ 
where $\mu(x)$ denotes the M\"obius function and $U(x)= \left\{ k : \text{if } l| k \text{ then } l \leq \xi_1(x) \right\}$.  For $k$ a squarefree integer, define 

 $$K_k= \prod_{l|k} K_l  = \left \{ \begin{array}{ll}
                                K(\zeta_k, \sqrt[k]{\mO_K^{\times}})  & \text{ if }2 \nmid k \\
                                 K(\sqrt{-1}, \zeta_b, \sqrt[b]{\mO_K^{\times}}) & \text{ if }k=2b    \end{array}
\right.  $$
to be the compositum of
the fields $K_l$ for $l|k$.  Then $K_k$ is a Galois extension of $\mathbb{Q}$, and for all $k$, $\displaystyle k^{n-1}  \phi(k)/2\leq \left[K_k: \mathbb{Q} \right]\leq 2n \phi(k)k^{n-1} $, where $n = [K:\mathbb{Q}]$ and $\phi(k)$ denotes the Euler phi function.  Define $d_k := |disc(K_k/\mathbb{Q})|$.  The following technical lemma will be useful for our estimate of $P(x,k)$.  The proof can be found in Roskam \cite{R2}.

\begin{lem}\label{tech}\cite[Prop.13]{R2}
There is a constant $\kappa_1$, depending on $\epsilon$,  such that for all $k >1$
and all subfields $F \subseteq K_k$ we have $\log d_F^{1/[F:\mathbb{Q}]}
\leq \kappa_1
\log k$, where $d_{F/\mathbb{Q}}$ is the absolute value of the discriminant of $F/\mathbb{Q}$.
\end{lem}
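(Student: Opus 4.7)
The plan is to bound the root discriminant $\log d_F^{1/[F:\mathbb{Q}]}$ by a local analysis at each ramified prime. First I would identify the rational primes that can ramify in $K_k/\mathbb{Q}$. The key observation is that each unit $u \in \mathcal{O}_K^{\times}$ has norm $\pm 1$, so for any $g \in G$ the discriminant of the polynomial $x^l - g(u)$ equals $\pm l^l g(u)^{l-1}$, which is an $l$-unit in $\mathcal{O}_K$. Adjoining $l$-th roots of units therefore introduces ramification only at primes above $l$. Combined with the ramification in the cyclotomic tower $K \subset K(\zeta_k)$ and the fixed ramification of $K/\mathbb{Q}$, the set of rational primes ramifying in $K_k/\mathbb{Q}$ is contained in $T \cup \{l : l \mid k\}$, where $T$ is the finite set of rational primes dividing $\mathrm{disc}(K/\mathbb{Q})$. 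Any subfield $F \subseteq K_k$ inherits the same set of ramified primes.

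Next I would bound the contribution of each ramified prime $p$ to $\log d_F^{1/[F:\mathbb{Q}]}$. Writing
$$\frac{v_p(d_F)\log p}{[F:\mathbb{Q}]} \;=\; \frac{\log p \sum_{\mathfrak{q} \mid p} f(\mathfrak{q}/p)\, d(\mathfrak{q}/p)}{\sum_{\mathfrak{q} \mid p} f(\mathfrak{q}/p)\, e(\mathfrak{q}/p)} \;\leq\; \log p \cdot \max_{\mathfrak{q} \mid p} \frac{d(\mathfrak{q}/p)}{e(\mathfrak{q}/p)},$$
where $d(\mathfrak{q}/p)$ denotes the exponent of the different, and applying the classical Serre bound $d(\mathfrak{q}/p) \leq e(\mathfrak{q}/p) - 1 + v_p(e(\mathfrak{q}/p))\, e(\mathfrak{q}/p)$, I obtain
$$\frac{v_p(d_F)\log p}{[F:\mathbb{Q}]} \;\leq\; \bigl(1 + v_p(e_{\max, p})\bigr)\log p,$$
where $e_{\max, p}$ is the largest ramification index of a prime of $K_k$ above $p$. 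The crucial step is then to show that $v_p(e_{\max, p})$ is bounded by a constant depending only on $K$, not on $k$. Decomposing the ramification at $p = l \mid k$ through $\mathbb{Q} \subset K \subset K(\zeta_k) \subset K_k$: the cyclotomic extension $K(\zeta_k)/K$ is tame at $l$ and contributes nothing to $v_l$; the Kummer piece is of degree dividing $l^n$ over $K(\zeta_l)$ (since $\mathcal{O}_K^{\times}$ has rank $n-1$) and contributes at most $n$ to $v_l(e_{\max,p})$; and the piece from $K/\mathbb{Q}$ contributes a constant $C_0 = O(1)$ independent of $l$. Hence $v_l(e_{\max, l}) \leq n + C_0$ for every $l \mid k$.

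Summing the local contributions,
$$\log d_F^{1/[F:\mathbb{Q}]} \;\leq\; \sum_{l \mid k} (1 + n + C_0)\log l \;+\; O(1) \;\leq\; \kappa_1 \log k,$$
with $\kappa_1$ depending only on $K$. The main obstacle is the careful treatment of wild ramification at primes dividing $k$: a na\"ive use of $\log e_{\max} \leq \log[K_k:\mathbb{Q}] = O(\log k)$, summed over the $\omega(k) = O(\log k/\log\log k)$ ramified primes, would only yield $O(\log^2 k/\log\log k)$, which is insufficient. What makes the argument work is isolating the wild part $v_p(e)\log p = \log p^{v_p(e)}$ via Serre's bound and exploiting that $\mathcal{O}_K^{\times}$ has bounded rank, so that $v_l(e_{\max, l})$ stays uniformly bounded independently of $k$.
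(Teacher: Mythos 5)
Your proof is correct, but note that the paper itself does not prove this lemma at all: it simply cites Roskam \cite[Prop.~13]{R2}, so what you have written is a self-contained argument for a statement the paper outsources. Your route is essentially the standard one (and close in spirit to Roskam's): ramification of $K_k/\mathbb{Q}$ is confined to primes dividing $k\cdot\mathrm{disc}(K)$ because one adjoins $l$-th roots of \emph{units} (discriminant of $x^l-u$ is $\pm l^l u^{l-1}$), the root discriminant of any subfield $F$ is bounded prime-by-prime via $v_p(d_F)/[F:\mathbb{Q}]\leq \max_{\mathfrak q\mid p} d(\mathfrak q)/e(\mathfrak q)\leq 1+v_p(e_{\max,p})$ using the bound $d\leq e-1+e\,v_p(e)$ on the different, and the wild part at $l\mid k$ is uniformly bounded because the cyclotomic layer is tame at $l$ while the $l$-part of $\Gal(K_k/K(\zeta_k))$ has order at most $l^{n}$; summing $\sum_{l\mid k}\log l\leq\log k$ gives $\kappa_1\log k$. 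An alternative organization (closer to how such bounds are often written, including in Roskam) is to bound $d_{K_k}$ directly by tower multiplicativity of discriminants, using $\mathfrak d_{K(\zeta_k,\sqrt[l]{u})/K(\zeta_k)}\mid (l^l)$ and the known cyclotomic discriminant, and then pass to subfields via $d_F^{1/[F:\mathbb{Q}]}\leq d_{K_k}^{1/[K_k:\mathbb{Q}]}$; your local-different argument buys the same $O(\log k)$ bound and correctly isolates the wild ramification, which, as you observe, is the step a na\"ive degree bound would miss. Two small points to tidy: a subfield's ramified primes form a \emph{subset} (not "the same set") of those of $K_k$; and tameness of $K(\zeta_k)/K$ at $l$ uses that $k$ is squarefree, which is the paper's standing convention for $k$ (for general $k$ one would instead use $\sum_{l\mid k} v_l(k)\log l\leq\log k$). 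Also, the constant you produce depends only on $K$, which is fine: the dependence "on $\epsilon$" in the statement is harmless since only finitely many units $\epsilon$ occur.
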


To estimate the sum $P(x,k)$ we will use the following conditional result
originally due to Lagarias-Odlyzko \cite{LO} and found in \cite{R2}.
\begin{thm}\label{Serre}
Let $F/\mathbb{Q}$ be a Galois extension with Galois group $G$, let $C$ be a
union of conjugacy classes of $G$, and denote the absolute value of the
discriminant of $F$ by $d_F$.  Define
$$\pi_C (x) = |\left\{p \leq x \text{ unramified in } F/\mathbb{Q}:(p,
F/\mathbb{Q}) \subseteq C\right\}|.$$
Then the generalized Riemann hypothesis (abbreviated GRH) implies that there is an absolute
constant $\kappa_2$ such that for all $x \geq 2$ the following inequality holds:
$$|\pi_C(x) - \frac{|C|}{|G|}Li(x)| \leq \kappa_2 |C| \sqrt{x}(\log
d_F^{1/[F:\mathbb{Q}]} + \log x),$$
with $Li(x) = \int_2^{\infty} \frac{dt}{\log t}$ the logarithmic integral.
\end{thm}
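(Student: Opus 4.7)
The plan is to follow the classical Lagarias--Odlyzko argument via Artin $L$-functions and Weil's explicit formula, with GRH used to control the location of the nontrivial zeros. First I would decompose the indicator function of the union of conjugacy classes $C$ in terms of the irreducible characters of $G$: writing $\mathbf{1}_C = \sum_\chi a_\chi \chi$ with $a_\chi = \frac{1}{|G|}\sum_{h\in C}\overline{\chi(h)}$, and introducing the weighted counting functions $\psi(x,\chi) = \sum_{N\mathfrak{p}^k \leq x} \chi(\text{Frob}_{\mathfrak{p}^k})\log N\mathfrak{p}$, one has $\sum_{p\leq x,\, (p,F/\mathbb{Q})\subseteq C}\log p = \sum_\chi a_\chi\psi(x,\chi)$ up to a bounded contribution from ramified primes and higher prime powers. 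Partial summation at the end converts the estimate for the $\psi$-sum into one for $\pi_C(x)$ against $\mathrm{Li}(x)$.

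Second, I would apply the Weil explicit formula to each Artin $L$-function $L(s,\chi,F/\mathbb{Q})$. This expresses $\psi(x,\chi)$ as the expected main term $\delta_{\chi,\mathbf{1}}\,x$ (coming from the simple pole at $s=1$ only for the trivial character) minus $\sum_\rho x^\rho/\rho$ plus an archimedean/ramified remainder of size $O(\chi(1)\log d_\chi + \log x)$. Under GRH, every nontrivial zero $\rho$ satisfies $\mathrm{Re}\,\rho = 1/2$, so $|x^\rho/\rho|\leq \sqrt{x}/|\rho|$.

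Third, I would bound the resulting sum over zeros using the standard zero-counting estimate: the number of nontrivial zeros of $L(s,\chi)$ with $|\mathrm{Im}\,\rho|\leq T$ is $O\bigl(\chi(1)T\log T + T\log d_\chi\bigr)$. Cutting at $T = x$ and using partial summation yields $\sum_\rho \sqrt{x}/|\rho| \ll \chi(1)\sqrt{x}(\log d_\chi + \chi(1)\log x)\log x$. Assembling the pieces, summing over $\chi$ with weights $a_\chi$, and invoking the conductor--discriminant formula $\log d_F = \sum_\chi \chi(1)\log d_\chi$ together with the character-sum bound $\sum_\chi |a_\chi|\chi(1) \leq |C|$ (which follows from Cauchy--Schwarz and orthogonality), produces the stated inequality $|\pi_C(x)-\frac{|C|}{|G|}\mathrm{Li}(x)| \ll |C|\sqrt{x}(\log d_F^{1/[F:\mathbb{Q}]} + \log x)$.

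The main obstacle is that Artin's holomorphy conjecture is not known for nontrivial irreducible $\chi$, so one cannot apply the explicit formula to $L(s,\chi)$ naively. The fix is Brauer's induction theorem: each character of $G$ can be written as a $\mathbb{Z}$-linear combination of characters induced from one-dimensional characters of subgroups, whence each $L(s,\chi)$ becomes a ratio of Hecke $L$-functions of intermediate fixed fields, which are unconditionally entire away from the trivial character. One then runs the above explicit-formula analysis for these Hecke $L$-functions under GRH. The subtle point is the bookkeeping needed to ensure that the constant $\kappa_2$ emerging from this Brauer reduction is genuinely absolute, independent of both $F$ and $C$; this is achieved by expressing the extra group-theoretic weights in terms of conductors and absorbing them into $\log d_F$ via the conductor--discriminant formula.
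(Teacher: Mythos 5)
You should first be aware that the paper does not prove this statement at all: it is imported as a known conditional result, attributed to Lagarias--Odlyzko and quoted in the form given by Roskam, and is then only \emph{applied} to the fields $F_{l_i}$ and $K_k$. So your proposal is really being measured against the published proof rather than against anything in the text. In broad strokes your outline is the standard one: decompose the indicator of $C$ over characters, pass to weighted prime-counting functions, apply the explicit formula, use GRH to put the zeros on the critical line, bound the zero sums by zero-counting estimates, and recover $\pi_C$ by partial summation, with the conductor--discriminant formula converting conductor data into $\log d_F$; the extra logarithm you pick up in the $\psi$-estimate is indeed recovered in the passage to $\pi_C$.

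The genuine gap is in the step you yourself flag as the main obstacle and then resolve too optimistically. Brauer's theorem writes $\chi=\sum_j m_j\,\mathrm{Ind}(\psi_j)$ with integers $m_j$ of both signs and with no a priori bound on $\sum_j|m_j|$ in terms of $[F:\mathbb{Q}]$; when you bound the zero sums of the individual Hecke factors you need the \emph{unsigned} quantity $\sum_j|m_j|\log \mathfrak{f}(\psi_j)$ (and the unsigned count of factors), whereas the conductor--discriminant formula only controls the signed combination that equals $\log$ of the conductor of $\chi$. Hence ``absorbing the extra group-theoretic weights into $\log d_F$'' does not follow as stated, and the absoluteness of $\kappa_2$ --- which is precisely the content of the theorem --- is left unestablished. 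The proof of Lagarias--Odlyzko (and Serre's exposition of it) avoids Brauer induction entirely via the Deuring--MacCluer reduction: for a single class $C$ pick $g\in C$, set $H=\langle g\rangle$ and $E=F^{H}$; then $\pi_C(x)$ is expressed, up to controlled error, through primes of $E$ with prescribed Frobenius in the cyclic extension $F/E$, counted using the $|H|$ characters of $H$, whose Artin $L$-functions are Hecke $L$-functions of $E$ by class field theory (so holomorphy is unconditional and GRH for them follows from GRH for $\zeta_F$, of which they are divisors). The explicit formula for these Hecke $L$-functions, together with the conductor--discriminant formula bounding $\log d_E$ and the conductors by $\log d_F$, then gives the inequality with an absolute constant, the factor $|C|/|G|$ emerging from the index bookkeeping $[F:E]=|H|$. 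If you want a self-contained argument you should rework your sketch along these lines; otherwise the honest course is to cite Lagarias--Odlyzko, exactly as the paper does.
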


This theorem is applied to two different sets of fields to finish the proof.  First it is applied in the proof of the following. 

\begin{lem}\label{M_1}
$\displaystyle M_1(x) = O\left(\frac{x}{\log^2 x}\right)$.
\end{lem}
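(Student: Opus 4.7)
The plan is to estimate each $P(x, l)$ for primes $l \in (\xi_1(x), \xi_2(x)]$ via the effective Chebotarev density theorem under GRH (Theorem \ref{Serre}) and then sum. As noted in the text, $M_1(x) \leq \sum_{l \in (\xi_1(x), \xi_2(x)]} P(x, l)$, so the task reduces to bounding each $P(x, l)$ and controlling the sum.

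Fix such a prime $l$; since $\xi_1(x)\to\infty$, for large $x$ we have $l\neq 2$. A prime $p$ contributing to $P(x,l)$ is split in $K/\mathbb{Q}$, satisfies $l\mid p-1$, and has some Frobenius $\sigma_{\Q}$ (for $\Q\mid p$ in $K(\zeta_l)$) that fails to generate $G_l$ as a $G$-module. By Lemma \ref{lemma:quadsub} and the equivalence established in its proof between the triviality of $\mathrm{Frob}(\qB_i, Q_i(\zeta_l, \sqrt[l]{u_i})/Q_i(\zeta_l))$ and the triviality of $(\Q, K(\zeta_l, \sqrt[l]{u_i})/K(\zeta_l))$, such a $p$ splits completely in
\[
F_{l,i} := K(\zeta_l, \sqrt[l]{u_i})
\]
for at least one $i\in\{1,\ldots,n-1\}$. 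Hence
\[
P(x, l) \;\leq\; \sum_{i=1}^{n-1} \bigl|\{\,p\leq x : p \text{ splits completely in } F_{l,i}\,\}\bigr|.
\]

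Next, I would apply Theorem \ref{Serre} to each $F_{l,i}$ with $C=\{\mathrm{id}\}$. From the identification $F_{l,i}=K_l^{M_i}$ in Section \ref{mq} we have $[F_{l,i}:K(\zeta_l)] = l$, so $[F_{l,i}:\mathbb{Q}] = l\cdot[K(\zeta_l):\mathbb{Q}]\gg l^2$ for $l$ large (using $[K(\zeta_l):K]\geq (l-1)/2$). Since $F_{l,i}\subseteq K_l$, Lemma \ref{tech} gives $\log d_{F_{l,i}}^{1/[F_{l,i}:\mathbb{Q}]} \ll \log l$, so the Chebotarev error becomes $\kappa_2\sqrt{x}(\log l + \log x) \ll \sqrt{x}\log x$ (since $l\leq x$). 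Combining and summing over the at most $n-1$ indices $i$ yields
\[
P(x, l) \;\ll\; \frac{x}{l^2\log x} \;+\; \sqrt{x}\,\log x.
\]

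The first contribution summed over primes $l>\xi_1(x)$ is $\frac{x}{\log x}\sum_{l>\xi_1(x)}\frac{1}{l^2} \ll \frac{x}{\xi_1(x)\log x} = O(x/\log^2 x)$, by the choice $\xi_1(x)=\frac{1}{2M}\log x$. The error term summed over primes $l\leq\xi_2(x)$ is $\ll \pi(\xi_2(x))\cdot\sqrt{x}\log x \ll \frac{\sqrt{x}}{\log^3 x}\cdot\sqrt{x}\log x = O(x/\log^2 x)$ by the prime number theorem. Hence $M_1(x)=O(x/\log^2 x)$. The main obstacle (and the reason behind the specific cutoffs $\xi_1,\xi_2$) is the balance between these two contributions: $\xi_1(x)$ must grow at least logarithmically for the main-term tail $\sum 1/l^2$ to be of size $1/\log x$, while $\xi_2(x)=\sqrt{x}/\log^2 x$ is essentially the largest cutoff for which the accumulated GRH error of $\sqrt{x}\log x$ per prime still fits within $O(x/\log^2 x)$—beyond this threshold, Chebotarev becomes too weak and one must fall back on Brun--Titchmarsh, as in the $M_2$ estimate.
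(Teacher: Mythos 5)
Your proposal is correct and follows essentially the same route as the paper: bound $M_1(x)$ by $\sum_{l\in(\xi_1,\xi_2]}P(x,l)$, reduce membership in $P(x,l)$ via Lemma \ref{lemma:quadsub} to complete splitting in one of the fields $K(\zeta_l,\sqrt[l]{u_i})$, and apply Theorem \ref{Serre} with Lemma \ref{tech}, summing the main terms over $l>\xi_1(x)$ and the GRH error terms over $l\leq\xi_2(x)$ exactly as in the paper (your use of $\pi(\xi_2(x))\ll\sqrt{x}/\log^3 x$ is just a minor computational variant of the paper's $\log x\leq 4\log\xi_2(x)$ step).
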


\begin{proof}
We have shown that $\displaystyle M_1(x) \leq \sum_{ l \in (\xi_1(x), \xi_2(x)]} P(x,l)$.  Again since $l  > \xi_1(x)$ we may assume that $l \neq 2$.  A prime $p$ contributes to the sum $P(x,l)$ if and only if $p$ is a prime split in $K$ with $p \leq x$ and $R(p,l)=1$; that is, $l|p-1$ and $\sigma_{\Q}$ does not generate $G_l$ as a $G$-module for any $\Q|p$.   By Lemma ~\ref{lemma:quadsub} there is an $i$, $1 \leq i \leq n-1$ such that $(\qB_i, Q_i(\zeta_l, \sqrt[l]{u_i})/Q_i(\zeta_l)) = (id)$, where $\qB_i$ is a prime of $Q_i(\zeta_l)$ lying above $p$.  Notice $R(p,l)=1$ if and only if $p$ splits completely in the extension $F_{l_{i}}=K(\zeta_l,\sqrt[l]{u_{i}})/\mathbb{Q}$ for some $1 \leq i \leq n-1$.  Hence a prime $p$ contributes to the sum $P(x,l)$ if and only if $p \leq x$ is split completely in $F_{l_i}/\mathbb{Q}$ for some $i$, $1 \leq i \leq n-1$.  We deduce
\begin{eqnarray*}
M_1(x) &\leq& \sum_{ l \in (\xi_1(x), \xi_2(x)]} P(x,l)\\
&\leq& \sum_{ l \in (\xi_1(x), \xi_2(x)]} |\left\{p \leq x: p \text{ splits in } F_{l_i}/\mathbb{Q} \text{ }\exists\text{ } 1 \leq i \leq n-1 \right\}|\\
&\leq&  \sum_{ l \in (\xi_1(x), \xi_2(x)]} \sum_{i=1}^{n-1} |\left\{p \leq x: p \text{ splits in } F_{l_i}/\mathbb{Q} \right\}|.\\
\end{eqnarray*}

Because the field extensions $F_{l_i}/\mathbb{Q}$ are Galois, we can apply Theorem ~\ref{Serre} to these fields $F_{l_i}/\mathbb{Q}$ with $C=C_{l_i}$ the completely split conjugacy class in order to get an estimate on $|\left\{p \leq x: p \text{ splits in } F_{l_i}/\mathbb{Q} \right\}|$.  For any $l$ and any $ 1 \leq i \leq n-1$, let $C=C_{l_i}= (id)$; then $\pi_C(x)= |\left\{p \leq x: (p,  F_{l_i}/\mathbb{Q})= (id) \right\}|=  |\left\{p \leq x: p \text{ splits in } F_{l_i}/\mathbb{Q} \right\}|$.  By Theorem ~\ref{Serre}, the GRH implies that there is an absolute constant $\kappa_2$ such that for all $x \geq 2$, $$\left|\pi_C(x) - \frac{1}{[F_{l_i}:\mathbb{Q}]}Li(x)\right| \leq \kappa_2 \sqrt{x}\left(\log
d_{F_{l_i}}^{1/[F_{l_i}:\mathbb{Q}]} + \log x\right),$$
since $|C_{l_i}|= 1$.  In addition, since $F_{l_i} \subseteq K_l$ for every $i$ and $l$, by Lemma ~\ref{tech}, $d_{F_{l_i}}^{1/[F_{l_i}:\mathbb{Q}]} \leq \kappa_1 \log l$, and so  $\displaystyle |\pi_C(x) - \frac{1}{[F_{l_i}:\mathbb{Q}]}Li(x)| = O\left( \sqrt{x}\log(lx)\right)$ for every $l$ and $i$.  Thus 
\begin{eqnarray*}
M_1(x) &\leq&  \sum_{ l \in (\xi_1(x), \xi_2(x)]} \sum_{i=1}^{n-1} |\left\{p \leq x: p \text{ splits in } F_{l_i}/\mathbb{Q} \right\}|\\
&\leq&  \sum_{ l \in (\xi_1(x), \xi_2(x)]} \sum_{i=1}^{n-1} \left(\frac{1}{[F_{l_i}:\mathbb{Q}]} Li(x) + O\left(\sqrt{x} \log(lx)\right)\right).\\
\end{eqnarray*}

Now, $[F_{l_i}:\mathbb{Q}] = n l (l-1)$ for all but finitely many $l$, and we may assume $\xi_1(x)>l$ for the finite number of these $l$. Since $nl(l-1) \geq  nl^2/2$ for all $l$, we deduce $1/[F_{l_i}:\mathbb{Q}] \leq 2/nl^2$ for all $l>\xi_1(x)$.  Hence 
\begin{eqnarray*}
M_1(x) &\leq&  \sum_{ l \in (\xi_1(x), \xi_2(x)]} \sum_{i=1}^{n-1} \left(\frac{1}{[F_{l_i}:\mathbb{Q}]} Li(x) + O\left(\sqrt{x} \log(lx)\right)\right)\\
&\leq&  \sum_{ l \in (\xi_1(x), \xi_2(x)]} \sum_{i=1}^{n-1} \frac{2}{nl^2}Li(x) +  \sum_{ l \in (\xi_1(x), \xi_2(x)]} \sum_{i=1}^{n-1} O(\sqrt{x}\log(lx))\\
&=& Li(x)(n-1) \sum_{l>\xi_1(x)}\frac{2}{nl^2} + (n-1) \sum_{ l \in (\xi_1(x), \xi_2(x)]}  O\left(\sqrt{x}\log(lx)\right)\\
&=& Li(x)(n-1) \sum_{l>\xi_1(x)}\frac{2}{nl^2}  + O\left(\sqrt{x}\log x \sum_{l \leq \xi_2(x)} 1\right).
\end{eqnarray*}

The former sum is bounded above by $\displaystyle \int_{\xi_1(x)}^{\infty} \frac{2}{nx^2} dx=\frac{2}{n\xi_1(x)}$, and the latter sum is bounded by the number of primes less than $\xi_2(x)$. By the Prime Number Theorem, the number of primes less than or equal to $z$ is $\displaystyle O\left( \frac{z}{\log z}\right)$, and $Li(x)$ is $\displaystyle O\left(\frac{x}{\log x}\right)$.  Thus 
\begin{eqnarray*}
M_1(x) &=& O\left( \frac{x}{\log x} \cdot \frac{1}{\xi_1(x)}\right) + O\left(\sqrt{x}\log x \frac{\xi_2(x)}{\log(\xi_2(x))}\right) \\
&=& O\left(\frac{x}{\log^2(x)}\right) + O\left(\frac{x}{\log^2 x} \cdot \frac{\log x}{\log{\xi_2(x)}}  \right),
\end{eqnarray*}
using the definitions of $\xi_1(x)$ and $\xi_2(x)$.  Lastly, since $ \xi_2(x)^4 =x^2/\log^8x \geq x$ for $x$ large enough, we see that $\log x \leq \log(\xi_2(x)^4)=4\log(\xi_2(x))$, and so 
$$O\left( \frac{x}{\log^2 x} \cdot \frac{\log x}{\log \xi_2(x)} \right) = O \left( \frac{x}{\log^2 x} \cdot 1\right).$$
Hence $\displaystyle M_1(x)=O\left(\frac{x}{\log^2(x)}\right) + O\left(\frac{x}{\log^2(x)}\right) =O\left(\frac{x}{\log^2(x)}\right) $.
\end{proof}

Next we apply Theorem ~\ref{Serre} to the fields $F=K_k$ to estimate the sum $P(x,k)$.  Let $C_k$ denote the union of conjugacy classes in $\Gal(K_k
/\mathbb{Q})$,
$$C_k= \bigcup \left\{\sigma \in \Gal(K_k/\mathbb{Q}): \sigma|_{K}=(id), \sigma|_{\mathbb{Q}(\zeta_l)}=(id) \text{ and }\right.$$
$$ \left. \sigma|_{K_l} \text{ does not generate } G_l \text{ as a }
G-\text{module for all }  l|k \right\}.$$
  Then by definition
$\pi_{C_k}(x)=P(x,k)$.  By Theorem ~\ref{Serre} the GRH
implies that $$|P(x,k) - \frac{|C_k|}{[K_k:\mathbb{Q}]}Li(x)| \leq \kappa_2
|C_k|\sqrt{x}(\log d_F^{1/[F:\mathbb{Q}]} + \log x).$$
This, combined with Lemma \ref{tech}, shows
\begin{eqnarray*}
|P(x,k) - \frac{|C_k|}{[K_k:\mathbb{Q}]}Li(x)| &\leq&  \kappa_2 |C_k|\sqrt{x}(\kappa_1
\log k + \log x)\\
&=& O\left( |C_k| \sqrt{x} \log (kx)\right)
\end{eqnarray*}
Hence  $ \displaystyle P(x,k) = \frac{|C_k|}{[K_k:\mathbb{Q}]}Li(x)+O\left( |C_k| \sqrt{x} \log (kx)\right).\hfill \mathbf{(2)} $\\

\begin{lem}\label{Nx1bound}
$\displaystyle N(x,\xi_1(x))= \sum_{k \in U(x)} \frac{\mu(k)|C_k|}{[K_k:\mathbb{Q}]}\frac{x}{\log x} + O\left( \frac{x}{\log^2 x} \right).$
\end{lem}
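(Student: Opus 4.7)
The plan is to start from the inclusion--exclusion identity
$$N(x,\xi_1(x)) = \sum_{k\in U(x)} \mu(k)\,P(x,k)$$
recorded just above the statement, and substitute the conditional estimate $(\ast\ast)$ into each summand. Writing $Li(x) = x/\log x + O(x/\log^2 x)$, this produces the desired leading term $\frac{x}{\log x}\sum_{k\in U(x)} \mu(k)|C_k|/[K_k:\mathbb{Q}]$ plus two error contributions: an arithmetic error
$$E_1(x) = O\!\left(\tfrac{x}{\log^2 x}\right)\sum_{k\in U(x)} \tfrac{|C_k|}{[K_k:\mathbb{Q}]}$$
coming from approximating $Li(x)$, and an analytic error
$$E_2(x) = \sum_{k\in U(x)} O\!\left(|C_k|\sqrt{x}\log(kx)\right)$$
coming from the remainder in $(\ast\ast)$.

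To handle $E_2(x)$, I would exploit the choice $M=2n+3$. Every squarefree $k\in U(x)$ has all prime factors at most $\xi_1(x) = (\log x)/(2M)$, so by Chebyshev's theorem $k \leq \prod_{l\leq \xi_1(x)} l = x^{O(1/M)}$. Combined with the crude bound $|C_k|\leq [K_k:\mathbb{Q}] \leq 2n\phi(k)k^{n-1} \leq 2n\,k^n$, one gets $|C_k| = O(x^{n/M})$ for each $k\in U(x)$. The number of squarefree $k\in U(x)$ is at most $2^{\pi(\xi_1(x))} = x^{o(1)}$ by the prime number theorem, and $\log(kx) = O(\log x)$. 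Adding up,
$$E_2(x) = O\!\left(x^{1/2+n/M+o(1)}\log x\right),$$
which, since $1/2 + n/(2n+3) < 1$, is strictly smaller than any negative power of $\log x$ times $x$, hence certainly $o(x/\log^2 x)$.

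To handle $E_1(x)$, the goal is to show that $\sum_{k\in U(x)} |C_k|/[K_k:\mathbb{Q}]$ stays bounded as $x\to \infty$. Here I would argue that for squarefree $k$ the ratio $|C_k|/[K_k:\mathbb{Q}]$ factors, up to a bounded multiplicative distortion coming from the non--linear--disjointness of the $K_l$ over $K$, as a product over primes $l\mid k$ of the local densities $|C_l|/[K_l:\mathbb{Q}]$. Each local factor is at most a constant times $(1-D_l)/[K(\zeta_l):K]$, and the estimate $1-D_l = O(1/l)$ derived in the proof of Proposition~\ref{nonzero} shows each factor is $O(1/l^2)$ for large $l$. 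Thus $\sum_{k\text{ squarefree}} |C_k|/[K_k:\mathbb{Q}]$ is dominated by a convergent Euler product $\prod_l(1+O(1/l^2))$, so $E_1(x) = O(x/\log^2 x)$.

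The main obstacle is the justification of quasi--multiplicativity of $|C_k|/[K_k:\mathbb{Q}]$, since the fields $K_l$ are not a priori linearly disjoint over $K(\zeta_k)$. I would bypass this by first replacing $K_l$ by a slightly enlarged Galois extension of $K$ in which the various $K_l$ are forced to be disjoint, absorbing the resulting finite index into a universal constant, and then reading off the local densities from Chebotarev in each $K_l$ separately. Once the three estimates are combined, the statement follows.
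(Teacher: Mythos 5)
Your treatment of the main term and of the $\sqrt{x}$-error is essentially the paper's: you expand $N(x,\xi_1(x))=\sum_{k\in U(x)}\mu(k)P(x,k)$, insert $(\ast\ast)$, and kill the Chebotarev remainder using $k\le x^{1/M}$ for $k\in U(x)$ and $|C_k|\le[K_k:\mathbb{Q}]\le 2nk^n$ with $M=2n+3$; your count of $|U(x)|\le 2^{\pi(\xi_1(x))}=x^{o(1)}$ is a harmless (in fact slightly sharper) variant of the paper's direct summation over $k\le x^{1/M}$, and the exponent $1/2+n/(2n+3)<1$ does the job. Where you diverge is the step you call $E_1$: because you replace $Li(x)$ by $x/\log x$ (which the statement literally requires, and which the paper's own proof glosses over by stopping at $Li(x)$ and deferring the convergence issue to Lemma~\ref{sumisfinite}), you need $\sum_{k\in U(x)}|C_k|/[K_k:\mathbb{Q}]=O(1)$ uniformly in $x$.

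This is where your proposal has a genuine gap: the proposed justification of quasi-multiplicativity is not a proof. Enlarging the fields $K_l$ cannot ``force'' them to become linearly disjoint --- the obstruction is their pairwise intersections, which persist inside any larger compositum --- and the distortion you would have to absorb is not a universal constant: already comparing $\prod_{l\mid k}[K_l:\mathbb{Q}]$ with $[K_k:\mathbb{Q}]$ costs a factor of order $(2n)^{\omega(k)}$, which is unbounded in $k$ (tolerable only if distributed one prime at a time, which your ``universal constant'' phrasing does not do). Fortunately you need far less than multiplicativity. Since $K_k$ is the compositum of the $K_l$, restriction gives an injection $C_k\hookrightarrow\prod_{l\mid k}C_l$, so $|C_k|\le\prod_{l\mid k}|C_l|$ trivially; combined with the degree bound $[K_k:\mathbb{Q}]\ge\phi(k)(k/2)^{n-1}$ recorded in the paper where $K_k$ is defined, and with $|C_l|\le 2^{n-1}l^{n-2}$ (equivalently $|C_l|/[K_l:\mathbb{Q}]=(1-D_l)/(n[K(\zeta_l):K])=O(1/l^2)$, which is the part of your argument that is fine), one gets $|C_k|/[K_k:\mathbb{Q}]\le\kappa(\delta)k^{-(2-\delta)}$ after absorbing the finitely many small primes into a constant, exactly as in the paper's Lemma~\ref{sumisfinite}; this is summable, so $E_1=O(x/\log^2 x)$. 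Alternatively, exact multiplicativity $|C_{rs}|=|C_r||C_s|$ is available, but it is proved (Lemma~\ref{ccmult}, via Lemma~\ref{disjoint}) by showing $K_r\cap K_s$ is abelian over $\mathbb{Q}$ and contained in $K(\zeta_r)\cap K(\zeta_s)$, where elements of $C_r$ and $C_s$ restrict to the identity --- not by any disjointness-by-enlargement. With the $E_1$ step repaired along either of these lines, your argument is correct and in fact matches the stated form of the lemma more precisely than the paper's own proof.
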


\begin{proof}

We have that $\displaystyle N(x, \xi_1(x)) = \sum_{k\in U(x)} \mu(k) P(x,k)$, where $U(x)= \left\{k: \text{if } l|k \text{ then } l \leq \xi_1(x) \right\}$.  Using equation $\mathbf{(2)}$ for $P(x,k)$ we deduce 
\begin{eqnarray*}
N(x, \xi_1(x)) &=& \sum_{k \in U(x)} \mu(k)\left[ \frac{|C_k|}{[K_k:\mathbb{Q}]}Li(x)+O\left( |C_k| \sqrt{x}\log x\right)\right]\\
&=& \sum_{k \in U(x)} \frac{\mu(k)|C_k|}{[K_k:\mathbb{Q}]}Li(x) + \sum_{k \in U(x)}O\left( |C_k| \sqrt{x}\log x\right).
\end{eqnarray*}
 Notice that by choice of $M$, $(1+n)/M = (1+n)/(2n+3) <1/2$.  Also, if $k \in U(x)$, then $k \leq x^{1/M}$.  For every $k$ we have the obvious inequality $|C_k| \leq [K_k:\mathbb{Q}] \leq 2n\phi(k)k^{n-1} \leq 2nk^n$, and so 
\begin{eqnarray*}
N(x, \xi_1(x)) - \sum_{k \in U(x)} \frac{\mu(k)|C_k|}{[K_k:\mathbb{Q}]}Li(x)&=& \sum_{k \in U(x)} O\left( |C_k| \sqrt{x} \log x\right)\\
&\leq&  \sum_{k \leq x^{1/M}} O\left( k^n \sqrt{x}\log x\right) \\
&=& O\left(x^{1/M} x^{n/M} \sqrt{x}\log (x^{1+1/M})\right) \\
&=& O\left(x^{\frac{4n+5}{4n+6}} \log (x)\right)=  O\left( \frac{x}{\log^2 x} \right).
\end{eqnarray*}

\end{proof}

Combining Lemmas \ref{M_1} and  \ref{Nx1bound} with the equation $\mathbf{(1)}$, we arrive at\\

$ \displaystyle N(x) =   \sum_{k \in U(x)} \frac{\mu(k)|C_k|}{[K_k:\mathbb{Q}]} \cdot Li(x) + O\left( \frac{x \log \log x}{\log^2 x} \right).  \hfill \mathbf{(3)}$\\

It remains to show that the sum $\displaystyle \sum_{k \in U(x)} \frac{\mu(k)|C_k|}{[K_k:\mathbb{Q}]}$ approaches a limit as 
$x \rightarrow \infty$. To do so, we will need several lemmas.

\begin{lem}\label{Krabelian}
The largest abelian extension of $\mathbb{Q}$ contained in $K_r$ is $K(\zeta_r)$.
\end{lem}
\begin{proof}
The sequence 
$$1 \rightarrow \Gal(K_r/K(\zeta_r)) \longrightarrow \Gal(K_r/\mathbb{Q}) \longrightarrow \Gal(K(\zeta_r)/\mathbb{Q}) \rightarrow 1 $$
given by restriction is exact, and so $\Gal(K_r/\mathbb{Q}) \cong \Gal(K(\zeta_r)/\mathbb{Q})  \rtimes \Gal(K_r/K(\zeta_r))$.  Since $r$ is odd, no extension of $K(\zeta_r)$ in $K_r$ is abelian.  Since $\Gal(K(\zeta_r)/\mathbb{Q})$ is abelian, we deduce
$$\Gal(K_r/\mathbb{Q})^{ab} \cong \Gal(K(\zeta_r)/\mathbb{Q}) \rtimes (id) \cong \Gal(K(\zeta_r)/\mathbb{Q}),$$
and so $K(\zeta_r)$ is the largest abelian extension of $\mathbb{Q}$ in $K_r$.
\end{proof}

\begin{lem}\label{k2disjoint}
$K_l \cap K_2=K$ for all $l \neq 2$.
\end{lem}

\begin{proof}
Suppose to the contrary that $l$ is a prime such that $K_2 \cap K_l = K\left(\sqrt{-1}\right)$.  Since $K\left(\sqrt{-1}\right)$ is abelian over $\mathbb{Q}$ and contained in $K_l$, by Lemma \ref{Krabelian}, $K\left(\sqrt{-1}\right) \subseteq K\left(\zeta_l\right)$.  Since $K\left(\sqrt{-1}\right)$ is the unique degree two extension of $K$ in $K\left(\zeta_l\right)$, we deduce $\displaystyle K\left(\sqrt{(-1)^{\frac{l-1}{2}} l}\right) \subseteq K\left(\sqrt{-1}\right)$.  Two cases arise.\\

\textbf{Case 1}: If $l \equiv 3 \mod 4$, then $K\left(\sqrt{-l}\right)\subseteq K\left(\sqrt{-1}\right)$.  Notice $K\left(\sqrt{-l}\right)$ is ramified over $\mathbb{Q}$ at the prime $l$.  Since $K$ contains a unit of norm $-1$, no prime $l\equiv 3 \mod 4$ can ramify in $K/\mathbb{Q}$, and so $l$ is unramified in $K\left(\sqrt{-1}\right)/\mathbb{Q}$, a contradiction.\\

\textbf{Case 2}:  If $l \equiv 1 \mod 4$, then $K\left(\sqrt{l}\right) \subset K\left(\sqrt{-1}\right)$, and the containment must be proper since $K\left(\sqrt{l}\right) \subseteq \mathbb{R}$.  This forces $\sqrt{l}\in K$, and so $2| [K\cap \mathbb{Q}\left(\zeta_l\right):\mathbb{Q}]$.  By Galois theory the isomorphism $\Gal\left(K\left(\zeta_l\right)/K\right) \cong \Gal\left(\mathbb{Q}\left(\zeta_l\right)/K\cap \mathbb{Q}\left(\zeta_l\right)\right)$ gives $\left[K\left(\sqrt{-1}\right) \cap \mathbb{Q}\left(\zeta_l\right):K \cap \mathbb{Q}\left(\zeta_l\right)\right]=2$.  This implies that $4|\left[K\left(\sqrt{-1}\right) \cap \mathbb{Q}\left(\zeta_l\right):\mathbb{Q}\right]$, a contradiction since $\Gal\left(K\left(\sqrt{-1}\right) \cap \mathbb{Q}\left(\zeta_l\right)/\mathbb{Q}\right)$ is a quotient of both the cyclic group $\Gal\left(\mathbb{Q}\left(\zeta_l\right)/\mathbb{Q}\right)$ and the elementary abelian group $\Gal\left(K\left(\sqrt{-1}\right)/\mathbb{Q}\right)$.
\end{proof}

 Let $E$ denote the genus field of $K$, the largest Abelian extension of $\mathbb{Q}$ contained in the Hilbert class field of $K$.  Define $E_k$ as the compositum $E_k=E\cdot K_k$ for $k$ odd and squarefree.  Let $\Delta$ denote the discriminant of $K/\mathbb{Q}$.

\begin{lem}\label{disjoint}
Let $r$ and $s$ denote squarefree odd integers with $\gcd(r,s)=1$.  
\begin{enumerate}\renewcommand{\labelenumi}{(\alph{enumi})}
\item $E_r \cap E_s = E$.
\item If $\gcd(r,\Delta)=1$, then $K_r \cap K_s=K$.
\end{enumerate}
\end{lem}
\begin{proof}
$(a)$: The field extensions $K_k/K$ and $E_k/E$ are ramified only at primes lying above primes $l|k$.  Therefore since $\gcd(r,s)=1$, the extension $E_r \cap E_s/E$ is unramified.  Since the extension $E/K$ is unramified, also $E_r \cap E_s/K$ is unramified.  We show $E_r \cap E_s/\mathbb{Q}$ is abelian; this implies $E_r \cap E_s$ is contained in the genus field of $K$, which is $E$, and so $E_r \cap E_s = E$.  Notice $E_r \cap E_s $ is a subfield of both $E_r(\zeta_{rs})$ and $E_s(\zeta_{rs})$, and so $E_r \cap E_s \subseteq E_r(\zeta_{rs}) \cap E_s(\zeta_{rs})$.  The extension $E_r(\zeta_{rs})$ of $E(\zeta_{rs})$ is obtained by adjoining $r^{th}$ roots, and so its degree is power of $r$; likewise $[E_s(\zeta_{rs}):E(\zeta_{rs})]$ is a power of $s$.  Thus, since $\gcd(r,s)=1$, $E_r(\zeta_{rs}) \cap E_s(\zeta_{rs}) = E(\zeta_{rs})$.  Thus $E_r \cap E_s \subseteq E(\zeta_{rs})$, and so $E_r \cap E_s$ is abelian over $\mathbb{Q}$.\\

$(b)$: Now suppose $\gcd(r,\Delta)=1$.  Then for a prime $l|r$, $K$ is unramified at $l$ and $\mathbb{Q}(\zeta_r)$ is totally ramified at $l$, and so $K(\zeta_r)=K \cdot \mathbb{Q}(\zeta_r)$ is totally ramified over $K$.  Thus the extension $K(\zeta_r) \cap E$ is both totally ramified and unramified over $K$, and so $K(\zeta_r) \cap E = K$.  Now, $K_r \cap K_s \subseteq E_r \cap E_s$, so $K_r \cap K_s$ is both abelian over $\mathbb{Q}$ and contained in $E$ by $(a)$.  By Lemma \ref{Krabelian}, this implies that $K_r \cap K_s \subseteq K(\zeta_r)$.  Hence $K_r \cap K_s \subseteq E \cap K(\zeta_r)=K$, so $K_r \cap K_s = K$.  
\end{proof}

\begin{lem}\label{ccmult}
$|C_{rs}|=|C_{r}||C_s|$ if $\gcd(r,s)=1$.
\end{lem}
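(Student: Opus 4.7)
The plan is to exhibit a bijection
$$C_{rs} \;\longrightarrow\; C_r \times C_s, \qquad \sigma \longmapsto (\sigma|_{K_r},\,\sigma|_{K_s}),$$
induced by the restriction map $\rho:\Gal(K_{rs}/\mathbb{Q}) \to \Gal(K_r/\mathbb{Q})\times\Gal(K_s/\mathbb{Q})$. First I would observe that $K_{rs}=K_r\cdot K_s$ (since $\gcd(r,s)=1$, any $rs$-th root of a unit factors into an $r$-th root times an $s$-th root after a Bezout manipulation, and $\zeta_{rs}=\zeta_r\zeta_s$), so $\rho$ is injective and its image is the fibre product $\Gal(K_r/\mathbb{Q})\times_{\Gal(K_r\cap K_s/\mathbb{Q})}\Gal(K_s/\mathbb{Q})$. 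Well-definedness of the induced map $C_{rs}\to C_r\times C_s$ is straightforward: for $\sigma\in C_{rs}$, the conditions ``$\sigma|_K=\mathrm{id}$, $\sigma|_{\mathbb{Q}(\zeta_l)}=\mathrm{id}$, and $\sigma|_{K_l}$ does not generate $G_l$'' for primes $l\mid r$ are conditions on $\sigma|_{K_r}$ alone and say exactly that $\sigma|_{K_r}\in C_r$, and symmetrically for $K_s$. Injectivity on $C_{rs}$ is inherited from $\rho$.

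The substantive step is surjectivity: given $(\tau_r,\tau_s)\in C_r\times C_s$, I need to show they agree on $K_r\cap K_s$, so that they lift to some $\sigma\in\Gal(K_{rs}/\mathbb{Q})$, which will automatically lie in $C_{rs}$. I will in fact prove the stronger statement that $\tau_r$ and $\tau_s$ each restrict to the identity on $K_r\cap K_s$. In the generic case where $r$ and $s$ are both odd, Lemma \ref{disjoint}(a) gives $K_r\cap K_s\subseteq E_r\cap E_s=E\subseteq\mathbb{Q}^{\mathrm{ab}}$. Hence $K_r\cap K_s\subseteq K_r\cap\mathbb{Q}^{\mathrm{ab}}$, and by Lemma \ref{disjoint}(b) the largest abelian extension of $\mathbb{Q}$ inside $K_r$ is $K(\zeta_r)$, so $K_r\cap K_s\subseteq K(\zeta_r)$. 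Since every $\tau_r\in C_r$ fixes $K$ and $\mathbb{Q}(\zeta_r)$, it fixes $K(\zeta_r)$, hence the subfield $K_r\cap K_s$. The same argument with $r$ and $s$ swapped shows $\tau_s|_{K_r\cap K_s}=\mathrm{id}$. Compatibility is then automatic, a lift $\sigma$ exists, and the inherited conditions place it in $C_{rs}$.

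It remains to treat the case $r=2$ (or $s=2$), where Lemma \ref{disjoint}(a), (b) are not directly stated. Here $|C_2|=1$: since $G_2=\Gal(K(\sqrt{-1})/K)\cong\mathbb{Z}/2$ acted on trivially by $G$, an element fails to generate $G_2$ as a $G$-module only if it is trivial. So the assertion reduces to the bijectivity of restriction $C_{2s}\to C_s$. Injectivity is clear because $K_{2s}=K_2 K_s$. For surjectivity I would replace the appeal to Lemma \ref{disjoint}(b) for $r=2$ by the direct observation that $K_2\cap K_s\subseteq K_s\cap\mathbb{Q}^{\mathrm{ab}}=K(\zeta_s)$ (applying Lemma \ref{disjoint}(b) on the odd side $s$), and then any $\tau\in C_s$ fixes $K(\zeta_s)$ and hence $K_2\cap K_s$; the lift to $K_{2s}$ fixing $K_2$ pointwise then exists and lies in $C_{2s}$.

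The main obstacle I expect is verifying the key containment $K_r\cap K_s\subseteq K(\zeta_r)$; everything else is bookkeeping with restriction maps. This containment is, however, the clean composition of Lemma \ref{disjoint}(a) (abelianness of $K_r\cap K_s$ over $\mathbb{Q}$) with Lemma \ref{disjoint}(b) (identification of the maximal abelian subfield of $K_r$), so the heavy lifting has been carried out in the previous lemma; the content of the present lemma is really that the $C$-conditions decouple along the restriction and that the compatibility on $K_r\cap K_s$ is forced trivially by membership in $C_r,C_s$.
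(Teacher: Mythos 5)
Your proposal is correct and follows essentially the same route as the paper: the same restriction map $C_{rs}\to C_r\times C_s$, injectivity from $K_{rs}=K_rK_s$, and surjectivity by using Lemma \ref{disjoint} (a) and (b) to place $K_r\cap K_s$ inside $K(\zeta_r)\cap K(\zeta_s)$, where elements of $C_r$ and $C_s$ act trivially, so the pair glues. Your explicit treatment of the case $r=2$ (via $|C_2|=1$) is a small addition beyond the paper's proof, which simply reduces to odd primes, but it does not change the method.
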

\begin{proof}
It is enough to prove this for $r$, $s$ prime numbers not equal to $2$.  We show the map $C_{rs} \longrightarrow C_r \times C_s$, $\sigma \mapsto (\sigma|_{K_r}, \sigma|_{K_s})$ is a bijection of sets.  It is clearly an injection since $K_{rs}=K_r K_s$.  To see the map is a surjection, suppose $(\rho, \sigma) \in C_r \times C_s$.  $K_r \cap K_s \subseteq E_r\cap E_s$, so by Lemmas \ref{Krabelian} and \ref{disjoint}, $K_r \cap K_s$ is abelian over $\mathbb{Q}$ and contained in $K(\zeta_r) \cap K(\zeta_s)$.  By assumption $\rho=(id)$ on $K(\zeta_r)$ and $\sigma = (id)$ on $K(\zeta_s)$, and so $\rho$ and $\sigma$ agree and are the identity on $K_r \cap K_s$.  Hence there is a map $\tau:K_k \rightarrow K_k$ in $C_{rs}$ mapping to $(\rho, \sigma)$. 
\end{proof}

\begin{lem}\label{clcount}
$|C_l| \leq 2^{(n-1)} l^{n-2}$ for every $l\neq 2$.
\end{lem}

\begin{proof}
If $l \neq 2$, $l \nmid |G|$, and so $G_l \cong \oplus_{i=1}^{n-1} V_i$, where $V_i$ are distinct irreducible one-dimensional $G-$modules.  Thus the number of elements that generate $G_l$ as a $G-$module is $\displaystyle (l-1)^{n-1}$, and so $\displaystyle |C_l|=l^{n-1} - (l-1)^{n-1}$.  For any $l$, $\displaystyle (l-1)^{n-1} \geq l^{n-1} - \sum_{i=0}^{n-2} \binom{n-1}{i}  l^i$, and so 
$$\displaystyle |C_l| \leq l^{n-1} - l^{n-1} + \sum_{i=0}^{n-2}  \binom{n-1}{i}  l^i \leq  \sum_{i=0}^{n-2} \binom{n-1}{i}l^{n-2} \leq 2^{n-1}l^{n-2}.$$
\end{proof}

\begin{lem}\label{sumisfinite}
$\displaystyle \lim_{x \rightarrow \infty}  \sum_{k \in U(x)} \frac{\mu(k)|C_k|}{[K_k :
\mathbb{Q}]}  = \sum_{k=1}^{\infty} \frac{\mu (k)|C_k|}{[K_k :
\mathbb{Q}]}  < \infty$.\\
\end{lem}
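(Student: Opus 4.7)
The plan is to show the series $\sum_{k=1}^{\infty} \mu(k)|C_k|/[K_k:\mathbb{Q}]$ converges absolutely and then deduce the limit statement via dominated convergence. Since $\mu(k)=0$ off squarefree $k$, I restrict throughout to squarefree $k$. For such $k$, I first establish a multiplicativity bound: restriction yields an injection
$$\Gal(K_k/K(\zeta_k))\hookrightarrow\prod_{l\mid k} G_l,$$
because any automorphism fixing $K(\zeta_k)$ and every $K_l$ is trivial on the compositum. Moreover, under this injection $C_k$ corresponds to the preimage of $\prod_{l\mid k} C_l$, where $C_l\subseteq G_l$ is the set of elements failing to generate $G_l$ as a $G$-module. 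Combined with $[K(\zeta_k):\mathbb{Q}]\geq\phi(k)$ and $[K(\zeta_l):\mathbb{Q}]\leq n(l-1)$, this yields an inequality of the form
$$\frac{|C_k|}{[K_k:\mathbb{Q}]}\;\leq\; c^{\omega(k)}\prod_{l\mid k}\frac{|C_l|}{[K_l:\mathbb{Q}]}$$
for a constant $c=c(K)$, where $\omega(k)$ denotes the number of distinct prime factors of $k$.

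Next, I estimate each prime factor. Since $|C_l|/|G_l|=1-D_l$, we have $|C_l|/[K_l:\mathbb{Q}]=(1-D_l)/[K(\zeta_l):\mathbb{Q}]$. The bound $1-D_l\leq 2^{n-1}/l$ already used in the proof of Proposition \ref{nonzero}, together with $[K(\zeta_l):\mathbb{Q}]\gg l$ (with the implied constant depending only on $K$, and the finitely many ramified primes contributing only bounded factors), gives $|C_l|/[K_l:\mathbb{Q}]=O(1/l^2)$. Combining with the multiplicativity bound, on squarefree $k$ the term $|C_k|/[K_k:\mathbb{Q}]$ is dominated by a multiplicative function comparable to $\prod_{l\mid k} O(1/l^2)$, so summing over all squarefree $k$ produces the convergent Euler product
$$\prod_{l\text{ prime}}\left(1+O\!\left(\tfrac{1}{l^2}\right)\right)<\infty,$$
proving absolute convergence of $\sum_{k=1}^{\infty}\mu(k)|C_k|/[K_k:\mathbb{Q}]$.

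For the limit statement, any fixed squarefree $k$ has a largest prime factor $l_{\max}(k)<\infty$, so $k\in U(x)$ as soon as $\xi_1(x)=(\log x)/(2M)\geq l_{\max}(k)$; in particular the indicator $\mathbf{1}_{U(x)}(k)$ tends to $1$ pointwise as $x\to\infty$. Dominated convergence, justified by the absolute summability from the previous step, then yields
$$\lim_{x\to\infty}\sum_{k\in U(x)}\frac{\mu(k)|C_k|}{[K_k:\mathbb{Q}]}\;=\;\sum_{k=1}^{\infty}\frac{\mu(k)|C_k|}{[K_k:\mathbb{Q}]}.$$

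The main obstacle is the multiplicativity bound, which hinges on controlling the intersections $K_l\cap K_{l'}$ for distinct primes $l,l'\mid k$. Since $K_l/K(\zeta_l)$ is a Kummer extension of exponent $l$ (respectively of $2$-power exponent when $l=2$), its intersection with $K_{l'}$ for a distinct prime $l'$ is forced to lie in the common ground field up to bounded index, allowing one to absorb all remaining discrepancies into the constant $c^{\omega(k)}$. The prime $l=2$ and the finitely many primes ramified in $K/\mathbb{Q}$ need to be handled separately but contribute only bounded correction factors and therefore do not threaten absolute convergence.
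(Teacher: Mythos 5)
Your argument is correct and is essentially the paper's own proof: both establish absolute convergence from the multiplicativity $|C_k|\le\prod_{l\mid k}|C_l|$, the bound $1-D_l\le 2^{n-1}/l$ already extracted in Proposition \ref{nonzero}, and the compositum degree estimate $\phi(k)(k/2)^{n-1}\le[K_k:\mathbb{Q}]$ stated when $K_k$ is defined (which is exactly what your $c^{\omega(k)}$ ``multiplicativity bound'' amounts to, so it is cleaner to cite that estimate than to sketch the Kummer-theoretic intersection argument you flag as the main obstacle), and both then identify the discrepancy between the partial sum over $U(x)$ and the full series with the tail over $k>\xi_1(x)$, which vanishes as $x\to\infty$. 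The only cosmetic differences are that you obtain $|C_k|\le\prod_{l\mid k}|C_l|$ from the restriction injection rather than the exact equality of Lemma \ref{ccmult}, and you sum the multiplicative majorant as an Euler product $\prod_l\left(1+O(l^{-2})\right)$ instead of the paper's comparison with $\kappa(\delta)k^{-(2-\delta)}$.
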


\begin{proof} 

If $k \notin U(x)$, then  $k > \xi_1(x)$.  Thus 
$$ \left|\sum_{k=1}^{\infty} \frac{\mu (k)|C_k|}{[K_k :
\mathbb{Q}]} -  \sum_{k \in U(x)} \frac{\mu(k)|C_k|}{[K_k :
\mathbb{Q}]}\right| \leq \left|\sum_{k > \xi_1(x)} \frac{\mu(k)|C_k|}{[K_k :
\mathbb{Q}]}\right|= \sum_{k > \xi_1(x)} \frac{|\mu(k)||C_k|}{[K_k :
\mathbb{Q}]},$$
 and so we can prove both that the sums are equal and finite in the same step.  Let $1> \delta > 0$.  For all $k$ squarefree, 
$$ \frac{1}{[K_k :\mathbb{Q}]} \leq \frac{2^{n-1}}{ \phi(k)k^{n-1}}\text{, and so } \sum_{k > \xi_1(x)} \frac{|\mu(k)||C_k|}{[K_k :\mathbb{Q}]}\leq \sum_{k > \xi_1(x)} \frac{|\mu(k)||C_k|2^{n-1}}{ \phi(k)k^{n-1}}.$$
  By Lemmas \ref{ccmult} and \ref{clcount} $\displaystyle |C_k| = \prod_{l|k}|C_l| \leq \prod_{l|k} 2^{(n-1)} l^{n-2}$.  Thus 

$$\sum_{k > \xi_1(x)} \frac{|\mu(k)||C_k|2^{n-1}}{ \phi(k)k^{n-1}} \leq \sum_{k > \xi_1(x)} \frac{|\mu(k)| \prod_{l|k}2^{n-1}l^{n-2}2^{n-1}}{ \phi(k)k^{n-1}}= \sum_{k > \xi_1(x)}\frac{|\mu(k)| \prod_{l|k}2^{2n-2}}{\phi(k)k}.$$

We show that there is a $\kappa(\delta)$ such that  for all $k$
squarefree, 
$$ \frac{\prod_{l|k}2^{2n-2}}{\phi(k)k} \leq
\frac{\kappa(\delta)}{k^{2-\delta}}\text{, or equivalently } \frac{\prod_{l|k}2^{2n-2}}{\phi(k)} \leq
\frac{\kappa(\delta)}{k^{1-\delta}}.$$

First note that for all $l > (2^{2n-1})^{1/\delta}=:r$, $l^{\delta} > 2^n$,
and so $l-1 \geq \frac{1}{2}l>2^{2n-2}l^{1-\delta}$.  Hence $\displaystyle 1//l^{1-\delta} > 2^{2n-2}/(l-1)$
for $l >r$.  Define $\displaystyle \kappa(\delta)=2^{(2n-2)r}\prod_{l\leq
r}l^{1-\delta}$; then for all $k$ squarefree,
\begin{eqnarray*}
\frac{\prod_{l|k}2^{2n-2}}{\phi(k)}&=&\prod_{l|k}{\frac{2^{2n-2}}{l-1}} \leq \prod_{l|k,l\leq r}{\frac{2^{2n-2}}{l-1}} \cdot
\prod_{l|k,l>r}{\frac{1}{l^{1-\delta}}} \\
&\leq& \prod_{l|k}{\frac{1}{l^{1-\delta}}} \cdot(2^{(2n-2)})^r \prod_{l
\leq r}l^{1-\delta}= \prod_{l|k}\frac{1}{l^{1-\delta}} \cdot \kappa(\delta)= \frac{\kappa(\delta)}{k^{1-\delta}}.\\
\end{eqnarray*}
Since $\mu(k) =0$ for all $k$ which are not squarefree, 
$$ \sum_{k > \xi_1(x)} \frac{|\mu(k)||C_k|2^{n-1}}{ \phi(k)k^{n-1}} \leq \kappa(\delta) \sum_{k>\xi_1(x)}\frac{1}{k^{2-\delta}},$$
 which converges since $1>\delta>0$.
\end{proof}

Taking a limit and applying Lemma \ref{sumisfinite} to equation $ \mathbf{(3)}$, we conclude
$$\displaystyle{\delta(S) = \lim_{x \rightarrow \infty}
\frac{N(x)}{x/\log x}= \sum_{k=1}^{\infty} \frac{\mu(k)|C_k|}{[K_k :
\mathbb{Q}]} }.$$  

\subsection{Nonvanishing of density}

Let $$\displaystyle c= \sum_{k=1}^{\infty} \frac{\mu(k)|C_k|}{[K_k:\mathbb{Q}]}.$$ 
It remains to show that $c \neq 0$ to conclude that the set $S$ has positive density. By Lemma \ref{k2disjoint},  $K_2\cap K_l=K$ for all $l \neq 2$, and so \\
 $$ c=  \left( 1-\frac{|C_2|}{[K_2:K]} \right)\sum_{k\text{ odd}} \frac{\mu(k)|C_k|}{[K_k:\mathbb{Q}]}=\frac{1}{2}\sum_{k\text{ odd}} \frac{\mu(k)|C_k|}{[K_k:\mathbb{Q}]}.$$
Using Theorem \ref{reformulationpowerof2} we conclude $\displaystyle 2c=\sum_{k\text{ odd}} \frac{\mu(k)|C_k|}{[K_k:\mathbb{Q}]}$ is the density of primes $p \in \mathbb{Z}$ completely split in $K$ for which the index of  $\psi_p(\mO_K^{\times})$ in $\left( \mO_K/p\mO_K\right)^{\times}$ is a power of 2 times $ (p-1)/2$.  Define $d$ to be the density of primes $p \in \mathbb{Z}$ completely split in $E$ for which the index of $\psi_p(\mO_K^{\times})$ in $\left( \mO_K/p\mO_K\right)^{\times}$ is a power of 2 times $ (p-1)/2$.  Since every prime that splits in $E$ is already split in $K$, $d \leq 2c$.  We show $d >0$ to conclude that $c >0$.\\

\begin{lem}\label{ramnot2}
For any $p \neq 2$, the ramification index  $e_p(K/\mathbb{Q})$ is at most $2$.
\end{lem}

\begin{proof}
Let $p \neq 2$, and complete the extension $K/\mathbb{Q}$ at a prime above $p$.  Let $\hat{G}$  denote the Galois group of the completed extension, $\hat{G}_{0}$ denote the inertia group, and $\hat{G}_i$ denote the higher ramification groups for $ i \geq 1$.  Then $\hat{G}$  is multiquadratic, and $\hat{G}_0 /\hat{G}_1$ is cyclic (being isomorphic to a multiplicative subgroup of a finite field) and multiquadratic, hence of order 1 or 2.   Since the higher ramification groups satisfy $\hat{G}_i/\hat{G}_{i+1} \hookrightarrow k$ for the residue field $k$ of order a power of $p \neq 2$, we deduce $|\hat{G}_i /\hat{G}_{i+1}|=1$ for all $i \geq 1$.   Since the higher ramification groups form an exhaustive filtration of $\hat{G}$, we further deduce that $|\hat{G}_1| =1$.  Because $|\hat{G}_0 /\hat{G}_1| =1$ or 2, we conclude that $|\hat{G}_0|=1$ or 2, and thus the ramification index $e_p(K/\mathbb{Q})= |\hat{G}_0|=1$ or 2. 
\end{proof}

\begin{lem}\label{genusismq}
The genus field $E$ of $K$ is a multiquadratic field.
\end{lem}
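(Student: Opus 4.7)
The plan is to prove that $\Gal(E/\mathbb{Q})$ is an elementary abelian $2$-group, since this is equivalent to $E$ being a multiquadratic extension of $\mathbb{Q}$. By the definition of the genus field, $E/\mathbb{Q}$ is abelian and $E/K$ is unramified, so $\Gal(E/\mathbb{Q})$ is already known to be abelian, and what remains is to show that every element has order dividing $2$.

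First I would analyze the inertia subgroups of $\Gal(E/\mathbb{Q})$. For each rational prime $p$ ramifying in $E/\mathbb{Q}$, choose a prime $\Q$ of $E$ lying above $p$ and consider its inertia group $I_{\Q}\subseteq \Gal(E/\mathbb{Q})$. Because $E/K$ is unramified, $I_{\Q}\cap \Gal(E/K)=1$, and therefore restriction to $K$ embeds $I_{\Q}$ into $\Gal(K/\mathbb{Q})\cong (\mathbb{Z}/2\mathbb{Z})^m$. Consequently every inertia subgroup of $\Gal(E/\mathbb{Q})$ is itself elementary abelian of exponent dividing $2$.

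Next I would pass from this local information to the global statement using Minkowski's theorem. Let $I\subseteq \Gal(E/\mathbb{Q})$ be the subgroup generated by all such inertia groups; because $\Gal(E/\mathbb{Q})$ is abelian, $I$ is normal, and the fixed field $E^I$ is an extension of $\mathbb{Q}$ unramified at every rational prime. Since $\mathbb{Q}$ admits no nontrivial everywhere-unramified extension, $E^I=\mathbb{Q}$, hence $I=\Gal(E/\mathbb{Q})$. An abelian group generated by subgroups of exponent dividing $2$ has exponent dividing $2$, so $\Gal(E/\mathbb{Q})$ is elementary abelian and $E$ is multiquadratic.

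I do not foresee a serious obstacle here, as the argument is a packaging of three standard ingredients. The only place that truly uses the multiquadratic hypothesis on $K$ is the embedding of each inertia subgroup into $\Gal(K/\mathbb{Q})$, which relies on $\Gal(K/\mathbb{Q})$ already having exponent $2$; the remaining ingredients (the definition of the genus field and Minkowski's theorem) hold in much greater generality, so the only delicate point to verify in writing up the proof is that the containment $I_{\Q}\cap \Gal(E/K)=1$ really follows from $E/K$ being unramified (which is immediate from the definition of the inertia group).
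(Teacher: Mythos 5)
Your proof is correct. It rests on the same two ingredients as the paper's argument --- the unramifiedness of $E/K$ and the fact that $\mathbb{Q}$ has no nontrivial extension unramified at every finite prime --- but it organizes them dually. The paper argues through quotients: given a cyclic quotient $\Gal(R/\mathbb{Q})\cong \mathbb{Z}/l^n\mathbb{Z}$ of $\Gal(E/\mathbb{Q})$, it uses the Minkowski-type fact to produce a prime totally ramified in $R/\mathbb{Q}$ and then bounds $e_p(R/\mathbb{Q}) \leq e_p(E/\mathbb{Q}) \leq 2$ by multiplicativity of ramification indices, forcing $l^n = 2$. You argue through subgroups: each inertia subgroup $I_{\mathfrak{P}} \subseteq \Gal(E/\mathbb{Q})$ meets $\Gal(E/K)$ trivially (its intersection with $\Gal(E/K)$ is exactly the inertia group of $\mathfrak{P}$ in $E/K$, which is trivial), hence embeds under restriction into $\Gal(K/\mathbb{Q}) \cong (\mathbb{Z}/2\mathbb{Z})^m$ and has exponent $2$; Minkowski then forces the inertia subgroups to generate the whole abelian group $\Gal(E/\mathbb{Q})$, which therefore has exponent $2$ and cuts out a multiquadratic field. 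A modest advantage of your packaging is that it uses only the exponent of $\Gal(K/\mathbb{Q})$, not the assertion $e_p(K/\mathbb{Q}) \leq 2$ on ramification indices that the paper invokes --- an assertion that deserves a word of justification at $p=2$, where the inertia subgroup of a multiquadratic field can a priori be non-cyclic --- so your argument applies verbatim to any multiquadratic $K$. The paper's packaging, in turn, makes explicit the criterion that an abelian field is multiquadratic exactly when every cyclic prime-power quotient of its Galois group has order $2$. The one point you flagged, that $I_{\mathfrak{P}} \cap \Gal(E/K) = 1$ follows from $E/K$ being unramified, is indeed standard and poses no difficulty.
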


\begin{proof}
Since the genus field $E$ is abelian over $\mathbb{Q}$, we prove $E$ is multiquadratic field by showing that every cyclic quotient of $\Gal(E/\mathbb{Q})$ of prime power order is of order $2$.  Let  $\Gal(E/\mathbb{Q}) \twoheadrightarrow \mathbb{Z}/l^n \mathbb{Z}$ for some prime $l$ and some $n \geq 1$.  By Galois theory, there is a subfield $R$ of $E$ with $\Gal(R/\mathbb{Q}) \cong \mathbb{Z}/l^n \mathbb{Z}$.  At least one prime must be totally ramified in $R/\mathbb{Q}$: to see this, suppose not.  Then the inertia group of every prime is proper in $\Gal(R/\mathbb{Q})$ and so is a subgroup of $H$, the unique subgroup of $\Gal(R/\mathbb{Q})$ of order $l^{n-1}$.  The fixed field $R^H$ of $H$ is then a field extension of degree $l$ over $\mathbb{Q}$ that is unramified at every prime, a contradiction.\\

Let $p$ be a prime that is totally ramified in $R/\mathbb{Q}$.  Then the ramification index $e_p(R/\mathbb{Q})$ equals $[R:\mathbb{Q}]$. By Lemma \ref{ramnot2} and the assumption that $e_2(K/\mathbb{Q}) \leq 2$, we deduce that $e_p(K/\mathbb{Q}) \leq 2$. Since ramification indices are multiplicative, $e_p(K/\mathbb{Q}) \leq 2$, and the extension $E/K$ is unramified at every prime, we conclude that $e_p(R/\mathbb{Q}) \leq e_p(E/\mathbb{Q}) \leq 2$.  Hence it must be that $e_p(R/\mathbb{Q}) = 2 = [R:\mathbb{Q}]$.
\end{proof}

For any odd integer $k \geq 1$, let $C'_k$ denote the union of conjugacy classes in $\Gal(E_k
/\mathbb{Q})$ given by
$$C'_k= \bigcup \left\{\sigma \in \Gal(E_k/\mathbb{Q}): \sigma|_{E}=(id), \sigma|_{\mathbb{Q}(\zeta_l)}=(id) \text{ and }\right.$$
$$ \left. \sigma|_{K_l} \text{ does not generate } G_l \text{ as a }
G-\text{module for all }  l|k \right\}.$$
Using Lemma \ref{genusismq}, we deduce from a symmetric argument to the one provided that $\displaystyle d=\sum_{k\text{ odd}} \frac{\mu(k)|C'_k|}{[E_k:\mathbb{Q}]}$.  

\begin{lem}\label{ciscprime}
If $k \geq 1$ is odd, $|C_k|=|C'_k|$.  
\end{lem}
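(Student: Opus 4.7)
My plan is to exhibit a bijection $\Phi : C_k \to C'_k$ via the natural restriction/lifting coming from $E_k = E \cdot K_k$. The main ingredients are standard Galois theory together with Lemma \ref{disjoint}(b) (which applies since $k$ is odd and squarefree in the context in which $C'_k$ is defined) and Lemma \ref{genusismq}. Concretely, the inverse map will be $\sigma \mapsto \sigma|_{K_k}$, which clearly carries $C'_k$ into $C_k$ since $K \subseteq E$, $\mathbb{Q}(\zeta_l) \subseteq K_k$, and $K_l \subseteq K_k$ for every $l \mid k$; the content is that every element of $C_k$ has a unique lift to $\Gal(E_k/E)$ and that this lift lands in $C'_k$.

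The crucial preliminary step will be to show that every $\alpha \in C_k$ acts trivially on the intersection $K_k \cap E$. Since $E$ is Abelian over $\mathbb{Q}$, the field $K_k \cap E$ is Abelian over $\mathbb{Q}$ and lies in $K_k$, so by Lemma \ref{disjoint}(b) it is contained in $K(\zeta_k)$. But $\alpha$ fixes $K$ and fixes each $\mathbb{Q}(\zeta_l)$ for $l \mid k$; since $k$ is squarefree and odd, $\mathbb{Q}(\zeta_k) = \prod_{l \mid k} \mathbb{Q}(\zeta_l)$, so $\alpha$ fixes $K(\zeta_k) = K \cdot \mathbb{Q}(\zeta_k)$, and in particular fixes $K_k \cap E$.

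With this in hand, the restriction isomorphism $\Gal(E_k/E) \xrightarrow{\sim} \Gal(K_k/K_k \cap E)$ provides, for each $\alpha \in C_k$, a unique lift $\tilde{\alpha} \in \Gal(E_k/\mathbb{Q})$ satisfying $\tilde{\alpha}|_{K_k} = \alpha$ and $\tilde{\alpha}|_E = \mathrm{id}$. Since $\mathbb{Q}(\zeta_l) \subseteq K_k$ and $K_l \subseteq K_k$ for every $l \mid k$, the conditions $\tilde{\alpha}|_{\mathbb{Q}(\zeta_l)} = \mathrm{id}$ and ``$\tilde{\alpha}|_{K_l}$ does not generate $G_l$ as a $G$-module'' are inherited directly from the corresponding conditions on $\alpha$. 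Thus $\tilde{\alpha} \in C'_k$, so $\Phi(\alpha) := \tilde{\alpha}$ defines a set map $C_k \to C'_k$. Composing with restriction to $K_k$ recovers $\alpha$, and for any $\sigma \in C'_k$, the element $\sigma$ is the unique element of $\Gal(E_k/E)$ lying above $\sigma|_{K_k}$; hence $\Phi$ is a bijection, giving $|C_k| = |C'_k|$.

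The only mildly delicate point is the very first one: verifying that $K_k \cap E \subseteq K(\zeta_k)$ and that elements of $C_k$ fix $K(\zeta_k)$. Once that is in place the rest is formal Galois theory. (Note in particular that the argument does not require computing $K_k \cap E$ explicitly; the containment in $K(\zeta_k)$ together with the definition of $C_k$ suffices.)
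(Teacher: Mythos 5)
Your proof is correct, and while it works with the same basic objects as the paper (restriction/lifting between $K_k$ and $E_k=E\cdot K_k$), the mechanism is genuinely different. The paper compares the two exact sequences $1\to\Gal(K_k/K(\zeta_k))\to\Gal(K_k/K)\to\Gal(K(\zeta_k)/K)\to 1$ and $1\to\Gal(E_k/E(\zeta_k))\to\Gal(E_k/E)\to\Gal(E(\zeta_k)/E)\to 1$, invoking the unramifiedness of $E/K$ to identify the cyclotomic quotients $\Gal(E(\zeta_k)/E)\cong\Gal(K(\zeta_k)/K)$ and then matching the defining conditions. You instead build an explicit bijection from the restriction isomorphism $\Gal(E_k/E)\cong\Gal(K_k/E\cap K_k)$, the only nontrivial input being that every $\alpha\in C_k$ fixes $E\cap K_k$, which you obtain from Lemma \ref{disjoint}(b): $E\cap K_k$ is Abelian over $\mathbb{Q}$, hence contained in $K(\zeta_k)$, and $\alpha$ fixes $K(\zeta_k)$ because $k$ is squarefree and $\alpha$ fixes $K$ and each $\mathbb{Q}(\zeta_l)$. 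What your route buys is robustness: the identification used in the paper amounts to $E\cap K(\zeta_k)=K$, which is automatic only when the primes dividing $k$ are unramified in $K$ and can fail otherwise (e.g.\ $K=\mathbb{Q}(\sqrt{65})$, $E=\mathbb{Q}(\sqrt{5},\sqrt{13})$, $l=5$, where $E\subseteq K(\zeta_5)$), whereas your argument never needs to compute $E\cap K_k$ or $E\cap K(\zeta_k)$ at all -- only the containment $E\cap K_k\subseteq K(\zeta_k)$ and the fact that elements of $C_k$ act trivially there. The one piece of bookkeeping to keep visible is that Lemma \ref{disjoint}(b) is stated for odd squarefree integers, so the lemma as you prove it covers the squarefree odd $k$, which suffices since $\mu(k)=0$ otherwise; with that caveat your argument is complete.
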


\begin{proof}
We have the following diagram.\\
\begin{center}
$ \xymatrix{    1 \ar[d] \ar[r] &  \Gal(K_k/K(\zeta_k))  \ar[d]  \ar[r] & \Gal(K_k/K) \ar[d] \ar[r] & \Gal(K(\zeta_k)/K) \ar[d] \ar[r] & 1 \ar[d] \\ 
  1 \ar[r]  & \Gal(E_k/E(\zeta_k)) \ar[r] & \Gal(E_k/E) \ar[r] & \Gal(E(\zeta_k)/E) \ar[r] & 1}$ \\
  \end{center}

\noindent Since $E/K$ is unramified, $\Gal(E(\zeta_k)/E) \cong \Gal(K(\zeta_k)/K)$, and for any $l|k$, an element of $\Gal(K_k/K)$ that is the identity on $E$ generates $G_l$ as a $G-$module if and only if its image in $\Gal(E_k/E)$ also generates $G_l$ as a $G-$module.  The result follows.
\end{proof}

By Lemmas \ref{disjoint} and \ref{ccmult} we deduce $d$ is completely multiplicative.  Combining this with Lemma \ref{ciscprime}, we have
\begin{eqnarray*} 
d=\sum_{k\text{ odd}} \frac{\mu(k)|C_k|}{[E_k:\mathbb{Q}]} &=& \frac{1}{[E:\mathbb{Q}]} \sum_{k\text{ odd}} \frac{\mu(k)|C_k|}{[E_k:E]}\\
 &=& \frac{1}{[E:\mathbb{Q}]} \prod_{l \neq 2\text{ prime}} \left(1+ \frac{\mu(l)|C_l|}{[E_l:E]}\right)\\
&=& \frac{1}{[E:\mathbb{Q}]} \prod_{l \neq 2\text{ prime}} \left(1 - \frac{|C_l|}{[E_l:E]}\right)\\
  &=& \frac{r_K}{[E:\mathbb{Q}]} \prod_{l\neq 2\text{ prime}} \left(1 - \frac{|C_l|}{(l-1)l^{n-1}}\right),\\
 \end{eqnarray*}
where $r_K \neq 0$ is a correction factor for those primes ramifying in $K/\mathbb{Q}$. We show this product is nonzero by proving the corresponding series 
$$ \log \left(\prod_{l \text{ prime}}  \left(1 - \frac{|C_l|}{(l-1)l^{n-1}}\right)\right)= \sum_l \log  \left(1 - \frac{|C_l|}{(l-1)l^{n-1}}\right)$$ is absolutely convergent.  By elementary comparisons we deduce $$ \sum_{l \text{ prime}} \log  \left(1 - \frac{|C_l|}{(l-1)l^{n-1}}\right) \leq \sum_{l \text{ prime}}  \frac{|C_l|}{(l-1)l^{n-1}}.$$  
Applying Lemma \ref{clcount}, 
$$\sum_{l \text{ prime}}  \frac{|C_l|}{(l-1)l^{n-1}}\leq \sum_{l \text{ prime}}  \frac{2^{n-1}l^{n-2}}{(l-1)l^{n-1}}=\sum_{l \text{ prime}}  \frac{2^{n-1}}{(l-1)l} \leq 2^{n-1} \sum_r \frac{1}{(r-1)r} <\infty. $$
 We conclude $d >0$, so $S$ has a positive density in the set of all primes.  Since also $[K(\zeta_l):K]= l-1$ for all $l \nmid \Delta$, we have shown that the density of $S$ equals
$$ c= \frac{1}{2[K:\mathbb{Q}]} \left( \sum_{k|\Delta} \frac{\mu(k)|C_k|}{[K_k:K]} \right)\prod_{l \nmid 2\Delta}  \left(1 - \frac{1}{(l-1)}\left(1-\frac{(l-1)^{n-1}}{l^{n-1}}\right)\right).$$
We have proven the following theorem.

\begin{thm}\label{maintheorem}

Let $K$ be a totally real multiquadratic field with unit group $\mO_K^{\times}$ that satisfies $e_2(K/\mathbb{Q}) \leq 2$.  The generalized Riemann hypothesis implies that there is a positive
density $c$ of primes split in $K/\mathbb{Q}$ for which the image of $\mO_K^{\times}$ in $\left( \mO_K/p\mO_K\right)^{\times}$ has index $ (p-1)/2$ if and only if $K$ contains a unit of norm $-1$.   Moreover, when $K$ contains a unit of norm $-1$, the density $c$ is given by
$$ c= \frac{1}{2[K:\mathbb{Q}]} \left( \sum_{k|\Delta} \frac{\mu(k)|C_k|}{[K_k:K]} \right)\prod_{l \nmid 2\Delta}  \left(1 - \frac{1}{(l-1)}\left(1-\frac{(l-1)^{n-1}}{l^{n-1}}\right)\right).$$
\end{thm}

\section{Units in multiquadratic fields}\label{examples}
Recall that $K$ is a totally real multiquadratic field containing a unit of norm $-1$ of degree $n=2^m$ over $\mathbb{Q}$. We call a set of $n-1$ generators for the non-torsion summand of the unit group a \textit{system of fundamental units} of $K$.  The question of which multiquadratic fields contain a unit of norm $-1$ has been extensively studied in the literature.  Kuroda's class number formula for multiquadratic fields, see \cite{Kr} or \cite{Wa}, relates units of $K$ with units  and class numbers of subfields.  \\

\begin{thm}\label{Kuroda} \textbf{(Kuroda's class number formula)}
Let $K$ denote a totally real multiquadratic field with $|\Gal(K/\mathbb{Q})|=n=2^m$.  Let $Q_i$, $i=1, \ldots ,n- 1$ denote the $n-1$ distinct quadratic subfields of $K$.  Let $h_i$ denote the class number of $Q_i$, and let $\mO_{Q_i}^{\times}$ be the unit group of $Q_i$.  Let $v=m(2^{m-1} - 1)$, and let $h$ denote the class number of $K$.  Then 
$$ h = \frac{1}{2^v} \left[ \mO_K^{\times} : \prod^{n-1}_{i=1} \mO_{Q_i}^{\times} \right] \cdot \prod^{n-1}_{i=1} h_i.$$
\end{thm}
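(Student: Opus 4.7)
The plan is to deduce Kuroda's formula from the analytic class number formula, with the constant $2^v$ emerging from a regulator computation that exploits the Hadamard structure of the character table of $G\cong(\mathbb{Z}/2\mathbb{Z})^m$. Because $K/\mathbb{Q}$ is abelian with character group consisting of the trivial character together with one quadratic character $\chi_i$ for each quadratic subfield $Q_i$, the Dedekind zeta function factors as $\zeta_K(s) = \zeta(s)\prod_{i=1}^{n-1} L(s,\chi_i)$, and since $\zeta_{Q_i}(s) = \zeta(s) L(s,\chi_i)$, taking residues at $s = 1$ gives
$$\operatorname{Res}_{s=1}\zeta_K(s) = \prod_{i=1}^{n-1}\operatorname{Res}_{s=1}\zeta_{Q_i}(s).$$

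Next I would substitute the analytic class number formula on each side. Because $K$ and every $Q_i$ are totally real with torsion $\{\pm 1\}$, the conductor--discriminant formula for the abelian extension $K/\mathbb{Q}$ gives $|d_K| = \prod_{i=1}^{n-1}|d_{Q_i}|$, so all the square roots cancel and, after collecting the powers of $2$, the residue identity collapses to the clean relation
$$h_K R_K = \prod_{i=1}^{n-1} h_i R_i.$$
Using the standard fact that $R(U') = [\mO_K^{\times} : U']\cdot R_K$ for any finite-index subgroup $U'\subseteq\mO_K^{\times}$, Kuroda's formula then reduces to the purely algebraic regulator identity
$$R\!\left(\prod_{i=1}^{n-1}\mO_{Q_i}^{\times}\right) = 2^v\prod_{i=1}^{n-1} R_{Q_i}.$$

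The main obstacle is this regulator identity, but it drops out cleanly from Hadamard orthogonality. Let $\epsilon_i$ be a fundamental unit of $Q_i$. Since $Q_i = K^{H_i}$, the value $g(\epsilon_i)$ depends only on the coset $gH_i\in G/H_i$, and the two embeddings of $Q_i$ into $\mathbb{R}$ send $\epsilon_i$ to values with reciprocal absolute values; hence under the log embedding $\lambda_K:\mO_K^{\times}\to\mathbb{R}^n$ one has $\lambda_K(\epsilon_i)=\log|\epsilon_i|\cdot\xi_i$, where $\xi_i\in\{\pm1\}^n$ is precisely the value vector of the quadratic character $\chi_i$. Together with the all-ones vector, the $\xi_i$'s form the full $n\times n$ character table $\Xi$ of $(\mathbb{Z}/2\mathbb{Z})^m$, a Hadamard matrix satisfying $\Xi^{\top}\Xi = nI$; in particular the $\xi_i$'s are $\mathbb{R}$-linearly independent, so the $\epsilon_i$'s are independent in $\mO_K^{\times}$ modulo torsion and the product $\prod\mO_{Q_i}^{\times}$ has finite index in $\mO_K^{\times}$. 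Therefore $R\bigl(\prod\mO_{Q_i}^{\times}\bigr) = \bigl(\prod\log|\epsilon_i|\bigr)\cdot|\det M|$, where $M$ is the $(n-1)\times(n-1)$ minor of $\Xi$ obtained by deleting the trivial-character row and any one column. Reading $|\det M|$ off the $(1,1)$-entry of the identity $\Xi^{-1} = \Xi^{\top}/n$ yields $|\det M| = |\det\Xi|/n = n^{n/2-1} = 2^{m(2^{m-1}-1)} = 2^v$, which is exactly the factor needed to complete the proof.
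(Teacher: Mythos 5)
Your argument is correct, and it is worth noting that the paper itself gives no proof of this statement: Kuroda's formula is quoted there as a known result with references to Kuroda and Wada, so there is no internal proof to compare against. Your derivation is essentially the classical one underlying those references: factor $\zeta_K(s)=\zeta(s)\prod_{i=1}^{n-1}L(s,\chi_i)$ and $\zeta_{Q_i}(s)=\zeta(s)L(s,\chi_i)$, use the conductor--discriminant formula $|d_K|=\prod_i |d_{Q_i}|$ so that the analytic class number formula collapses (the powers of $2$ do cancel exactly, since $2^{n}/2=\prod_i 2^2/2$) to $h_K R_K=\prod_i h_i R_i$, and then compute the regulator of the subgroup $\prod_i \mO_{Q_i}^{\times}$ via the character table of $(\mathbb{Z}/2\mathbb{Z})^m$. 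The Hadamard computation is right: $\lambda_K(\epsilon_i)=\log|\epsilon_i|\cdot\xi_i$ because $N^{Q_i}_{\mathbb{Q}}\epsilon_i=\pm1$, the nontrivial rows have zero sum so the deleted column does not matter, and the $(1,1)$-cofactor identity gives $|\det M|=|\det\Xi|/n=n^{n/2-1}=2^{m(2^{m-1}-1)}=2^v$, which indeed matches the paper's use of $v=9$ when $m=3$. Two small points to make explicit if you write this up: the fact $R(U')=[\mO_K^{\times}:U']\,R_K$ requires $U'$ to contain the torsion $\{\pm1\}$ (otherwise the index is off by a factor of $2$), which holds here since each $\mO_{Q_i}^{\times}$ contains $-1$; and you should say that $\epsilon_1,\dots,\epsilon_{n-1}$ generate $\prod_i\mO_{Q_i}^{\times}$ modulo $\pm1$, so they are a legitimate basis for the regulator of that subgroup --- both are immediate, and with them your proof is complete and self-contained.
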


\begin{pro}\label{proposition:subfield}
If $K$ is a number field containing a unit of norm $-1$, then every subfield $F$ of $K$ also contains a unit of norm $-1$.
\end{pro}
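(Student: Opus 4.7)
The plan is to use the transitivity of the norm map in towers of number fields. Given a unit $\epsilon \in \mO_K^{\times}$ with $N^K_{\mathbb{Q}}(\epsilon) = -1$, I would consider its relative norm $\eta := N^K_F(\epsilon) \in F$ and show that $\eta$ is the desired unit of $F$ with norm $-1$.

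The key steps are as follows. First, observe that the relative norm of a unit is a unit: since $\epsilon \in \mO_K^{\times}$, each Galois conjugate of $\epsilon$ (in a fixed normal closure of $K$ over $\mathbb{Q}$) is an algebraic integer and a unit, and $N^K_F(\epsilon)$ is a product of such conjugates, hence lies in $\mO_F \cap F^{\times} = \mO_F^{\times}$. Second, apply transitivity of the norm: $N^F_{\mathbb{Q}} \circ N^K_F = N^K_{\mathbb{Q}}$, so
\[
N^F_{\mathbb{Q}}(\eta) = N^F_{\mathbb{Q}}\bigl(N^K_F(\epsilon)\bigr) = N^K_{\mathbb{Q}}(\epsilon) = -1.
\]
This produces the required unit of norm $-1$ in $F$.

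There is essentially no obstacle here; the proof is a one-line application of transitivity of norms together with the fact that the relative norm sends units to units. The only thing to note carefully is that the statement does not require $K/F$ or $K/\mathbb{Q}$ to be Galois, so one should either define $N^K_F$ via the determinant of multiplication by $\epsilon$ on $K$ as an $F$-vector space, or equivalently via the product over $F$-embeddings of $K$ into an algebraic closure; both yield the same element of $F$ and the transitivity identity continues to hold.
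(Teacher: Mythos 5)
Your proposal is correct and is essentially identical to the paper's own proof: take the relative norm $N^K_F$ of the norm $-1$ unit and apply transitivity $N^F_{\mathbb{Q}} \circ N^K_F = N^K_{\mathbb{Q}}$. The extra remarks about the relative norm sending units to units and the non-Galois definition of the norm are fine but the argument is the same.
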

\begin{proof}
Let $ u \in \mO_K^{\times}$ be a unit such that $N_{\mathbb{Q}}^K u = -1$, and let $F$ be a subfield of $K$.  Then $x=N_F^K u$ is an element of $F$, and $N_{\mathbb{Q}}^{F} x= N_{\mathbb{Q}}^{F} N_{F}^K u =  N_{\mathbb{Q}}^K u=-1$.  Hence $F$ contains a unit $x$ of norm $-1$.
\end{proof}

 Thus we consider units in (multiquadratic) subfields.  If a quadratic field $K=\mathbb{Q}(\sqrt{d})$ contains a unit of norm $-1$, then all prime divisors of $d$ are either $2$ or equivalent to $1 \mod 4$.  If $K$ is a totally real biquadratic field, choose positive integers $m$ and $n$ such that $K = \mathbb{Q}( \sqrt{m}, \sqrt{n})$.  Call the fundamental units in the three quadratic subfields $Q_1 = \mathbb{Q}(\sqrt{m})$, $Q_2 = \mathbb{Q}(\sqrt{n})$, and $Q_3 = \mathbb{Q}(\sqrt{mn})$ $\epsilon_1$, $\epsilon_2 $ and $\epsilon_3 $, respectively.  The structure of the unit group of a totally real biquadratic field has been extensively studied, and in \cite{Kb} (see also \cite{Mouhib}, \cite{Wu}), Kubota completely classified the possible structures into one of seven types.  \\

Kubota's work proves that if each quadratic subfield of $K$ has a unit $x_{i} \in Q_i$ of norm $N_{\mathbb{Q}}^{Q_i} x_i = -1$, then a system of fundamental units of $K$ must be of one of the two forms $\left\{ \epsilon_1 , \epsilon_2 , \epsilon_{3} \right\}$ or $ \left\{ \epsilon_1 , \epsilon_2 , \sqrt{\epsilon_1 \epsilon_2 \epsilon_{3}} \right\}$.  The latter occurs if and only if the element $ z = \sqrt{\epsilon_1 \epsilon_2 \epsilon_{3}} $ lies in $K$, and $z$ will be a unit of $K$ of norm $-1$, as desired. 

\begin{pro}
Suppose $S=\left\{ \epsilon_1 , \epsilon_2 , \epsilon_{3} \right\}$ is a system of fundamental units for $K$, where each $\epsilon_i$ is the fundamental unit of the quadratic subfield $Q_i$.  Then every unit of $K$ has norm +1. 
\end{pro}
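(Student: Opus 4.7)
The plan is to exploit multiplicativity of the norm together with the tower formula, which will reduce the claim to a trivial parity computation on each generator of $\mO_K^\times$.

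First I would record the structure of the unit group. Since $K$ is totally real of degree $4$, Dirichlet gives $\mO_K^\times = \{\pm 1\}\oplus \mathbb{Z}^3$, and by hypothesis the free part is generated by $\{\epsilon_1,\epsilon_2,\epsilon_3\}$. Thus every $u \in \mO_K^\times$ can be written uniquely as $u = \pm \epsilon_1^{a_1}\epsilon_2^{a_2}\epsilon_3^{a_3}$ for integers $a_i$. Since $N_{\mathbb{Q}}^K$ is a group homomorphism, it suffices to compute the norm on the four generators $-1, \epsilon_1, \epsilon_2, \epsilon_3$.

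Next I would compute each generator's norm. The easy one is $N_{\mathbb{Q}}^K(-1) = (-1)^{[K:\mathbb{Q}]} = (-1)^4 = +1$. For the $\epsilon_i$, the key observation is that each lives in the proper subfield $Q_i$, so I apply the tower $N_{\mathbb{Q}}^K = N_{\mathbb{Q}}^{Q_i}\circ N_{Q_i}^K$. Since $\epsilon_i \in Q_i$ is fixed by $\mathrm{Gal}(K/Q_i)$, the inner norm is just raising to the degree: $N_{Q_i}^K(\epsilon_i) = \epsilon_i^{[K:Q_i]} = \epsilon_i^2$. Therefore
$$N_{\mathbb{Q}}^K(\epsilon_i) \;=\; N_{\mathbb{Q}}^{Q_i}(\epsilon_i^2) \;=\; \bigl(N_{\mathbb{Q}}^{Q_i}(\epsilon_i)\bigr)^2 \;=\; (\pm 1)^2 \;=\; +1,$$
regardless of whether $\epsilon_i$ itself had norm $+1$ or $-1$ down to $\mathbb{Q}$.

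Combining, $N_{\mathbb{Q}}^K(u) = 1 \cdot 1^{a_1}\cdot 1^{a_2}\cdot 1^{a_3} = +1$, proving the claim. There is really no main obstacle here: the content is entirely packaged in the hypothesis that the system of fundamental units is of this product form. The nontrivial case (where a unit of norm $-1$ can appear) is precisely the case where instead $\sqrt{\epsilon_1\epsilon_2\epsilon_3}\in K$, so that the free part is enlarged by a square root and the squaring trick above no longer kills the sign; the hypothesis of the proposition rules this out by assuming the $\epsilon_i$ themselves already form a fundamental system.
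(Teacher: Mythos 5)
Your proof is correct and follows essentially the same route as the paper: write $u=\pm\epsilon_1^{a_1}\epsilon_2^{a_2}\epsilon_3^{a_3}$, then use multiplicativity of the norm and the tower formula $N_{\mathbb{Q}}^K=N_{\mathbb{Q}}^{Q_i}\circ N_{Q_i}^K$ with $N_{Q_i}^K(\epsilon_i)=\epsilon_i^2$ to see each generator has norm $(\pm 1)^2=+1$. Your explicit treatment of the torsion generator $-1$ is a minor tidiness improvement over the paper's writeup, but the argument is the same.
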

\begin{proof}
Let $u \in \mO_{K}^{\times}$.  Then since $S$ is a system of fundamental units for $K$, we know $u= \pm \epsilon_{1}^{a_{1}} \epsilon_{2}^{a_{2}} \epsilon_{3}^{a_{3}}$ for some integers $a_{i}$.  Hence
\begin{eqnarray*} 
 N^{K}_{\mathbb{Q}}(u) &=& N^{K}_{\mathbb{Q}}(\pm \epsilon_{1}^{a_{1}} \epsilon_{2}^{a_{2}} \epsilon_{3}^{a_{3}})  = \prod_{i=1}^{3} (N^{K}_{\mathbb{Q}}\epsilon_{i})^{a_{i}} \\
 &=& \prod_{i=1}^{3} (N^{Q_{i}}_{\mathbb{Q}}(N^{K}_{Q_{i}}\epsilon_{i}))^{a_{i}}  = \prod_{i=1}^{3} (N^{Q_{i}}_{\mathbb{Q}}(\epsilon_{i})^{2})^{a_{i}} \\
 &=& \prod_{i=1}^{3} ((N^{Q_{i}}_{\mathbb{Q}}(\epsilon_{i}))^{2})^{a_{i}}  = \prod_{i=1}^{3} ((\pm 1)^{2})^{a_{i}} = 1
\end{eqnarray*} 
 because $\epsilon_{i} \in Q_{i}$.  Thus every element in $K$ has norm 1. 
\end{proof}

\begin{cor}\label{corollary:biquad}
If $K$ is a totally real biquadratic field containing a unit of norm $-1$, then a system of fundamental units of $K$ is $ \left\{ \epsilon_1 , \epsilon_2 , \sqrt{\epsilon_1 \epsilon_2 \epsilon_{3}} \right\}$, where each $\epsilon_i$ is the fundamental unit of the quadratic subfield $Q_i$.
\end{cor}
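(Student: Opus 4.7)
My plan is to assemble this corollary directly from the three previously stated ingredients: Proposition \ref{proposition:subfield} (subfields inherit units of norm $-1$), the Kubota classification paraphrased just before the corollary statement, and Proposition \ref{norm+1} (if $\{\epsilon_1,\epsilon_2,\epsilon_3\}$ is a fundamental system then every unit of $K$ has norm $+1$). The corollary is essentially an elimination between the only two surviving cases of Kubota's classification, so there is no new hard computation required.

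First I would use Proposition \ref{proposition:subfield} to conclude that each quadratic subfield $Q_i$ of $K$ itself contains a unit of norm $-1$. Because every unit of a real quadratic field is of the form $\pm \epsilon_i^t$ for $\epsilon_i$ the fundamental unit, having a unit of norm $-1$ forces $N^{Q_i}_{\mathbb{Q}}(\epsilon_i)=-1$. This places us precisely in the hypothesis of the Kubota statement recalled in the paragraph preceding the corollary: each of the three fundamental units $\epsilon_1,\epsilon_2,\epsilon_3$ of the quadratic subfields has norm $-1$.

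Next I would invoke that Kubota statement to reduce the possibilities for a system of fundamental units of $K$ to the two candidates $\{\epsilon_1,\epsilon_2,\epsilon_3\}$ and $\{\epsilon_1,\epsilon_2,\sqrt{\epsilon_1\epsilon_2\epsilon_3}\}$. To rule out the first, I apply Proposition \ref{norm+1}: if $\{\epsilon_1,\epsilon_2,\epsilon_3\}$ were a fundamental system, then every unit of $K$ would have norm $+1$, contradicting the standing hypothesis that $K$ contains a unit of norm $-1$. Hence the fundamental system must be the second one, $\{\epsilon_1,\epsilon_2,\sqrt{\epsilon_1\epsilon_2\epsilon_3}\}$, as claimed.

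The only potential subtlety — and thus the main thing to be careful about — is the step where one concludes that $\epsilon_i$ itself (rather than some power $\epsilon_i^t$) has norm $-1$; this is immediate from the structure $\mO_{Q_i}^\times=\{\pm \epsilon_i^t\}$ together with the multiplicativity of the norm, but it should be mentioned explicitly so that the appeal to Kubota's hypothesis is on solid ground. Beyond that, the argument is purely a two-line elimination and does not require any new analysis of the field $K$.
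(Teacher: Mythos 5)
Your proposal is correct and is essentially the argument the paper intends: Proposition \ref{proposition:subfield} forces each fundamental unit $\epsilon_i$ of the quadratic subfields to have norm $-1$ (as the paper notes, a real quadratic field has a norm $-1$ unit if and only if its fundamental unit does), Kubota's classification then leaves the two candidate systems, and Proposition \ref{norm+1} eliminates $\{\epsilon_1,\epsilon_2,\epsilon_3\}$. No gaps; your explicit remark about passing from ``some unit of norm $-1$'' to ``the fundamental unit has norm $-1$'' is a nice touch the paper handles earlier in its quadratic-field discussion.
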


It is known (see \cite{IR} Ch. 17, \cite{Kb}) that for primes $p,q \equiv 1 \mod 4$ with Legendre symbol $\left( \frac{p}{q} \right) = -1$, the fields $\mathbb{Q}(\sqrt{p})$, and $\mathbb{Q}(\sqrt{pq})$ contain a unit of norm $-1$. 

\begin{pro}
If $p,q \equiv 1 \mod 4$ are distinct primes with $\left( \frac{q}{p} \right) = -1$, then $K= \mathbb{Q}(\sqrt{p}, \sqrt{q})$ contains a unit of norm $-1$.
\end{pro}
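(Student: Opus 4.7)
The plan is as follows. Since $p \equiv q \equiv 1 \pmod 4$, quadratic reciprocity yields $\left(\frac{p}{q}\right) = \left(\frac{q}{p}\right) = -1$, so by Proposition \ref{pro:quadodd} each of the three quadratic subfields $Q_1 = \mathbb{Q}(\sqrt{p})$, $Q_2 = \mathbb{Q}(\sqrt{q})$, $Q_3 = \mathbb{Q}(\sqrt{pq})$ of $K$ contains a unit of norm $-1$. Let $\epsilon_i$ denote the fundamental unit of $Q_i$, so that $N^{Q_i}_{\mathbb{Q}}(\epsilon_i) = -1$. By the Kubota dichotomy recalled just before Corollary \ref{corollary:biquad}, the index $Q := [\mO_K^{\times} : \prod_{i=1}^{3} \mO_{Q_i}^{\times}]$ must be $1$ or $2$, and combining Proposition \ref{norm+1} with Corollary \ref{corollary:biquad} shows that $K$ contains a unit of norm $-1$ if and only if $Q = 2$ (in which case $\sqrt{\epsilon_1\epsilon_2\epsilon_3}\in K$ is such a unit). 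Thus it suffices to rule out $Q = 1$.

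I would carry out this step by applying Kuroda's class number formula (Theorem \ref{Kuroda}) in the case $m=2$, $v=2$:
\[
4\,h_K \;=\; Q\, h_1 h_2 h_3,
\]
and analyzing the $2$-parts of the $h_i$ via genus theory. The discriminants of $Q_1$ and $Q_2$ are each a single prime $\equiv 1 \pmod 4$, so the $2$-rank of $\mathrm{Cl}(Q_i)$ is zero for $i=1,2$ and $h_1, h_2$ are odd. The discriminant of $Q_3 = \mathbb{Q}(\sqrt{pq})$ has two prime divisors, so genus theory gives $2$-rank equal to $1$ and shows $h_3$ is even. The crucial step is to prove $h_3 \equiv 2 \pmod 4$, i.e.\ that the $4$-rank of $\mathrm{Cl}(Q_3)$ vanishes. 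This is the content of the R\'edei--Reichardt theorem applied to the factorization $pq = p\cdot q$: the nontrivial genus class of $Q_3$ lifts to a square in the class group exactly when the ternary form $p x^2 - q y^2 - z^2 = 0$ has a nontrivial integer solution, and by Legendre's criterion this requires $\left(\frac{q}{p}\right) = 1$, contradicting the hypothesis. Writing $h_3 = 2k$ with $k$ odd and substituting back into Kuroda's formula yields $2 h_K = Q\, h_1 h_2 k$ with $h_1 h_2 k$ odd, which forces $Q$ to be even and hence equal to $2$.

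The main obstacle is the $4$-rank calculation, i.e.\ the implication $\left(\frac{q}{p}\right) = -1 \Rightarrow h_3 \equiv 2 \pmod 4$. Once this R\'edei--Reichardt input is in place, everything else reduces to routine bookkeeping with Kuroda's formula and the elementary genus-theoretic $2$-rank statements for prime-discriminant and two-prime-discriminant real quadratic fields. An alternative route that avoids invoking R\'edei--Reichardt directly would be a direct Legendre-symbol computation on a presumed order-$4$ class in $\mathrm{Cl}(Q_3)$: if $\mathfrak{a}^2$ lay in the non-trivial genus class, it would have to be equivalent to a ramified prime $\mathfrak{p}$ above $p$, and chasing norms of a generator of $\mathfrak{a}^4 = (\beta)$ would force $p$ to be represented by the principal form of discriminant $pq$, again contradicting $\left(\frac{q}{p}\right)=-1$; but this is essentially a restatement of the Legendre-criterion obstruction above.
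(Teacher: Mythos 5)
Your argument is correct, but note that the paper itself gives no proof of this proposition: it is stated with a reference to Kubota \cite{Kb} and the proof is omitted, so there is no internal argument to compare against, and what you have done is supply one. Your route combines ingredients the paper already quotes (the Kubota dichotomy for biquadratic unit groups, Proposition \ref{norm+1}, Corollary \ref{corollary:biquad}, Proposition \ref{pro:quadodd}, and Kuroda's formula, Theorem \ref{Kuroda} with $m=2$, $v=2$) with one genuinely external input: the R\'edei--Reichardt/Legendre computation showing that the $4$-rank of the class group of $\mathbb{Q}(\sqrt{pq})$ vanishes when $\left(\frac{q}{p}\right)=-1$, hence $h_3\equiv 2 \pmod 4$; your Legendre-condition check for $px^2-qy^2-z^2=0$ is right, and the parity count $2h_K=Q\,h_1h_2k$ with $h_1h_2k$ odd then correctly forces $Q=2$, hence $\sqrt{\epsilon_1\epsilon_2\epsilon_3}\in K$. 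Two points to make explicit if you write this up: (i) the genus-theoretic rank statements you invoke are for the narrow class group, so you should note that narrow and ordinary class groups of $Q_3$ coincide because $Q_3$ has a unit of norm $-1$, which Proposition \ref{pro:quadodd}(b) gives you; (ii) the step ``$Q=2$ implies that $\sqrt{\epsilon_1\epsilon_2\epsilon_3}$ is a unit of norm $-1$'' is itself only asserted in the paper on Kubota's authority, so your proof is self-contained only modulo that assertion (a short direct verification of $N^{K}_{\mathbb{Q}}(\sqrt{\epsilon_1\epsilon_2\epsilon_3})=-1$, or the citation to \cite{Kb}, closes it). Compared with the paper's bare citation, your approach buys an actual proof from named classical results, at the cost of importing the $4$-rank criterion, which the paper nowhere else needs.
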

\begin{proof}
Let $K$ be as given and let $Q_1 = \mathbb{Q}(\sqrt{p})$,  $Q_2=\mathbb{Q}(\sqrt{q})$, and $Q_3=\mathbb{Q}(\sqrt{pq})$ be the three quadratic subfields of $K$.  Since each $Q_i$ has fundamental unit $\epsilon_i$ of norm $-1$, from Kubota's work, we know the unit group of $K$ has one of the two systems of fundamental units $\left\{ \epsilon_1, \epsilon_2, \epsilon_3 \right\}$ or $\left\{ \epsilon_1, \epsilon_2, \sqrt{\epsilon_1 \epsilon_2 \epsilon_3} \right\}$.  $K$ has the former structure if and only if $\left[ \mO_K^{\times} : \prod_{i=1}^3 \mO_{Q_i}^{\times} \right]=1$ and $K$ has no unit of norm $-1$; $K$ has the latter structure if and only if $\left[ \mO_K^{\times} : \prod_{i=1}^3 \mO_{Q_i}^{\times} \right] = 2$ and $z=\sqrt{\epsilon_1 \epsilon_2 \epsilon_3}$ is a unit in $K$ of norm $-1$.  We will show that a system of fundamental units for $K$ is the latter structure, so $K$ contains a unit $z$ of norm $-1$.\\

Let $h_i$ denote the class number of $Q_i$.  Since $p,q \equiv 1 \mod 4$, from genus theory we know that $h_1$ and $h_2$ are both odd, and that $K=E=E^{(+)}$ is the genus field of $Q_3$ so $ 2 |h_3$.  Kuroda's class number formula for biquadratic fields yields $|C_K| =1/4 \cdot \left[ \mO_K^{\times} : \prod_{i=1}^3 \mO_{Q_i}^{\times} \right] h_1 h_2 h_3 \in \mathbb{N}$, so since $h_1$ and $h_3$ are odd, it must be that either $4 | h_3$ or $2 |\left[ \mO_K^{\times} : \prod_{i=1}^3 \mO_{Q_i}^{\times} \right] $.  We show $4 \nmid h_3$, so it must be that $2 |\left[ \mO_K^{\times} : \prod_{i=1}^3 \mO_{Q_i}^{\times} \right]$.  Then the unit group of $K$ must have system of fundamental units given by $\left\{ \epsilon_1, \epsilon_2, \sqrt{\epsilon_1 \epsilon_2 \epsilon_3} \right\}$, so $K$ contains a unit of norm $-1$.  \\

Let $H$ denote the Hilbert class field of $Q_3$ and $C_3= \Gal(H/Q_3)$ denote the class group of $Q_3$.  To see that $4 \nmid h_3$, suppose to the contrary that $4|h_3= |\Gal(H/Q_3)|$.  Since $H/Q_3$ is abelian, $Q_3$ must have an unramified field extension of degree $4$, say $\tilde{H}$.  Since the genus field of $Q_3$ is $K=\mathbb{Q}(\sqrt{p}, \sqrt{q})$, $\Gal(E/Q_3)\cong \mathbb{Z}/2\mathbb{Z} \cong C_3 / C_3^2$, and so there is a unique summand of $C_3$ of 2-power order, so that the 2-part of $C_3$ is cyclic.  Thus $Q_3 \subseteq K \subseteq \tilde{H}$ and $\Gal(\tilde{H}/Q_3) \cong \mathbb{Z} / 4 \mathbb{Z}$. Also, we see that $\tilde{H}/\mathbb{Q}$ is Galois, since $H/\mathbb{Q}$ is Galois and $\tilde{H}$ is the unique subfield of $H$ of degree $8$ over $\mathbb{Q}$.  $\tilde{H}/\mathbb{Q}$ will not be abelian, since $E$ is the largest extension of $K$ that is Galois over $\mathbb{Q}$, so $\Gal(\tilde{H}/\mathbb{Q})$ is a nonabelian group of order $8$.  We next prove it is isomorphic to $D_8$. \\

\begin{lem}
$G :=\Gal(\tilde{H}/\mathbb{Q}) \cong D_8$, the dihedral group of order $8$.
\end{lem}
\begin{proof}
$G$ fits into the exact sequence 
$$ 1 \rightarrow \Gal(\tilde{H}/Q_3) \rightarrow G \rightarrow \Gal(Q_3/\mathbb{Q}) \rightarrow 1. $$
Let $I_p$ be the inertia group at the prime $p$.  $I_p$ is a subgroup of $G$  of order 2, since $p$ ramifies in $Q_3 /\mathbb{Q}$ and is unramified in the rest of the extension $\tilde{H}/Q_3$.  Thus $I_p \cong \Gal(Q_3/\mathbb{Q})$, which gives a splitting of the above sequence.  Thus
 $G$ is isomorphic to the semidirect product of $\mathbb{Z}/4\mathbb{Z}$ and $\mathbb{Z}/2\mathbb{Z}$.  Let $\sigma \in G$ be an element which maps to $-1$ in $\Gal(Q_3/\mathbb{Q})$, so that $\sigma$ does not fix $Q_3$.  Then we can use class field theory to compute the action of $\sigma$  on the class group of $Q_3$, which determines the action of $\sigma$ on $G$.  For any ideal $I$ of $Q_3$, $I\sigma(I)= (N(I))$, which is a principal ideal and hence trivial in the class group.  Thus in the class group, $[ I]= [\sigma(I)]^{-1}$, so $\sigma$ acts by $-1$.  Hence $G$ is dihedral.
\end{proof}

Now we know that $G=\Gal(\tilde{H}/\mathbb{Q})$ has the presentation $\langle  a,b |a^4 =1, b^2=1, bab=a^{-1}\rangle$  since $G$ is dihedral.  $\langle a \rangle $ is the unique cyclic subgroup of $D_8$ of order 4, so $\langle a \rangle = \Gal(\tilde{H}/Q_3) \leq \Gal(\tilde{H}/\mathbb{Q})$.  Let $\p = (p, \sqrt{pq})$ be a prime ideal of $Q_3$ lying above $p$, and let $\Q$ be a prime in $K$ lying above $\p$, and $\B$ a prime of $\tilde{H}$ lying above $\Q$.  Let $\sigma = (\p, \tilde{H}/Q_3)=(\B, \tilde{H}/Q_3)=(\p, H/Q_3)|_{\tilde{H}} \in \langle a \rangle = \Gal(\tilde{H}/Q_3)$ be the Frobenius element for the prime $\B$, or equivalently for $\p$ since the extension is abelian.  $\sigma$ has order 2 in the class group $C_3=\Gal(H/Q_3)$ since $\sigma ^2 =(p, \sqrt{pq})^2=(p,pq)=(p)$ is principal, so $\sigma$ has order 1 or 2 in $\Gal(\tilde{H}/Q_3)$, meaning $\sigma=a^2$ or $1$.  Now, $\Gal(\tilde{H}/Q_3)\cong D_8$ has three subgroups of order 4, namely $\langle a \rangle =\Gal(\tilde{H}/Q_3)$, $\Gal(\tilde{H}/\mathbb{Q}(\sqrt{p}))$, and $\Gal(\tilde{H}/\mathbb{Q}(\sqrt{q}))$, and each of them contain the elements $1$ and $a^2$.  Thus $\sigma \in \Gal(\tilde{H}/Q_3) \cap \Gal(\tilde{H}/\mathbb{Q}(\sqrt{p})) =\Gal(\tilde{H}/Q_3 \cdot \mathbb{Q}(\sqrt{p})) =\Gal(\tilde{H}/K)$ by Galois theory.  Hence $\sigma$ fixes the field $K$.  Now, $\sigma=(\B , \tilde{H}/Q_3)$ is the Frobenius element attached to a prime $\B$ in $\tilde{H}$ lying above $p$.  This implies that the Frobenius element $(\Q, K/\mathbb{Q}(\sqrt{p}))= (id)$ as well, since $\Q | p$.  But $K=(\mathbb{Q}(\sqrt{p}))(\sqrt{q})$, and so we know that the Frobenius symbol in this field extension is given by the Legendre symbol.  That is, for $\Q|p$, $(\Q, K/\mathbb{Q}(\sqrt{p}))=1$ if and only if $\left(\frac{q}{p} \right) = +1$.  This contradicts our initial assumption that $\left(\frac{q}{p} \right) = -1$.  Hence it is not the case that $4|h_3$, and so $K$ contains a unit of norm $-1$.
\end{proof}

\begin{exa}\label{5-13norm-1}
 Consider the field $K=\mathbb{Q}(\sqrt{5}, \sqrt{13})$, which contains a unit of norm $-1$ and has genus field $E=K$.  Only the primes 5 and 13 ramify in $K/\mathbb{Q}$, and for these primes $[K(\zeta_l):K]=(l-1)/2$.  Using Theorem \ref{maintheorem}, we compute the density $D$ of primes for which $K$ has minimal index mod p to be

$$ D=\frac{1}{4}\cdot\frac{1}{2} \prod_{l \neq 2} \left(1- \frac{1}{[K(\zeta_l):K]}\left(1-\frac{(l-1)^3}{l^3}\right)\right)= \frac{1}{4}\cdot \frac{1}{2}\cdot \frac{35}{54}\cdot \frac{189}{250} \cdot \frac{1931}{2058} \cdots = 0.05142184\ldots.$$

The accuracy of this numerical approximation can be verified following the method given in \cite{Moree}, which gives error bounds on the log value of
$$\prod_{k=2}^{\infty} \zeta(k)^{-e_k}$$   and proves that this product is an equivalent form of the product given above.  Using PARI, we computed that the density among the first two hundred thousand primes for which $K$ has minimal index mod $p$ is

$$ 0.05176. $$
\end{exa}

\begin{lem}\label{mqprime}
If $K=\mathbb{Q}(\sqrt{d_{1}}, \ldots ,\sqrt{d_{m}})$ has odd class number, then in fact each $d_i=p_i$ is a prime number with $p_i \equiv 1 \mod 4$ for each $i$, except possibly $d_1=2$.
\end{lem}
\begin{proof}
Let $p_1, \ldots , p_s$ denote all of the prime factors of the various $d_i$, with $p_1 = 2$ or an odd prime congruent to $1 \mod 4$, and all other $p_k \equiv 1 \mod 4$.  Since by assumption $ d_i\nmid d_j$ if $i \neq j$, we know $s \geq m$, with equality holding if and only if each $d_i= p_i$ is a prime number.  Now, if  $ s > m$, then the field $F=\mathbb{Q}( \sqrt{p_1} , \ldots , \sqrt{p_s} )$ is a proper field extension of $K$ of degree $2^s$ contained in the genus field $E$ of $K$.  Hence $F \subseteq H$, the Hilbert class field of $K$, and so $2 | \left|C_K\right|$.  This is impossible by assumption, so it must be that $s=m$ and each $d_i =p_i$ is a prime number, as claimed.
\end{proof}

\begin{thm}\label{thm:triodd}
If $L$ is a totally real triquadratic field with odd class number, then every unit of $L$ has norm $+1$.
\end{thm}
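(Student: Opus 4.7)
Assume for contradiction that $L$ has odd class number $h_L$ and contains a unit $u$ with $N^L_{\mathbb{Q}}(u)=-1$. By Proposition~\ref{proposition:subfield}, every subfield of $L$ also has a unit of norm $-1$. Let $Q_1,\ldots,Q_7$ be the quadratic subfields with fundamental units $\epsilon_i$ (each of norm $-1$), and let $B_1,\ldots,B_7$ be the biquadratic subfields. By Corollary~\ref{corollary:biquad}, each $B_j$ contains a unit $\eta_j := \sqrt{\epsilon_p\epsilon_q\epsilon_r}$, where $\ell_j=\{p,q,r\}$ labels the three quadratic subfields of $B_j$. The plan is to derive a contradiction by combining Kuroda's class number formula with a Fano plane index computation and Gauss's genus theory.

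First I will study the units. Set $U_0 := \prod_{i=1}^{7}\mO_{Q_i}^{\times} = \langle\pm 1,\epsilon_1,\ldots,\epsilon_7\rangle$ and $U_1 := \langle U_0, \eta_1, \ldots, \eta_7\rangle$. The seven triples $\ell_j$ are exactly the lines of the Fano plane on the seven points labeled by the $Q_i$. Since $\eta_j^2 = \prod_{i\in\ell_j}\epsilon_i$, working modulo $\{\pm 1\}$ I get that $U_1/U_0$ is isomorphic to the $\mathbb{F}_2$-span of the seven Fano line indicators inside $U_0/2U_0 \cong \mathbb{F}_2^7$; this span coincides with the $[7,4,3]$ binary Hamming code and hence has dimension $4$, so $[U_1:U_0] = 2^4$. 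Moreover, transitivity of the norm yields $N^L_{\mathbb{Q}}(\epsilon_i) = (N^{Q_i}_{\mathbb{Q}}\epsilon_i)^{[L:Q_i]} = (-1)^4 = 1$ and $N^L_{\mathbb{Q}}(\eta_j) = N^{B_j}_{\mathbb{Q}}(\eta_j^2) = 1$, so every element of $U_1$ has norm $+1$. Consequently $u \in \mO_L^{\times}\setminus U_1$, forcing $[\mO_L^{\times}:U_1] \geq 2$, whence $v_2([\mO_L^{\times}:U_1])\geq 1$.

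Next I will apply Kuroda's formula (Theorem~\ref{Kuroda}) to $L$ with $m=3$ and $v=9$, to obtain
\[ h_L = \frac{[\mO_L^{\times}:U_0]\prod_{i=1}^{7} h_i}{2^9} = \frac{[\mO_L^{\times}:U_1]\prod_{i=1}^{7} h_i}{2^{5}}. \]
For a lower bound on $v_2(\prod_i h_i)$ I will invoke Gauss's genus theory: since $N(\epsilon_i)=-1$ the narrow and ordinary class numbers of $Q_i$ agree, and the $2$-rank of the class group of $Q_i$ equals $t_i-1$, where $t_i$ denotes the number of rational primes ramifying in $Q_i$. A direct count (each rational prime that ramifies in some $Q_i$ ramifies in exactly four of them, and the triquadraticity of $L$ forces at least three distinct ramified rational primes in $\{2\}\cup\{p:p\mid d_1d_2d_3\}$) yields $\sum_i t_i \geq 12$ and therefore $v_2(\prod_i h_i) \geq \sum_i(t_i-1) \geq 5$. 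Combining, $v_2(h_L) \geq 1+5-5 = 1$, contradicting the assumption that $h_L$ is odd. The main technical points I anticipate are (i) verifying that the $\mathbb{F}_2$-span of the seven Fano indicators really has dimension exactly $4$ (anything smaller would kill the argument), and (ii) handling the case in which some $d_i$ is even, where one must check that the extra ramification at $2$ precisely compensates for the smaller number of odd primes needed for triquadraticity, so that $\sum_i t_i \geq 12$ still holds.
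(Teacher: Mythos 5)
Your proposal is correct and uses the same two quantitative inputs as the paper --- the factor $2^4$ coming from the units $\sqrt{\epsilon_p\epsilon_q\epsilon_r}$ of the seven biquadratic subfields (the paper states this as: the $\alpha_j$ span a rank~$4$ subspace of $\mO_L^{\times}/(\mO_L^{\times})^2$, which is exactly your Fano/Hamming-code computation), the bound $v_2\bigl(\prod_i h_i\bigr)\geq 5$ from genus theory, and Kuroda's formula with $v=9$ --- but the endgame is genuinely different. The paper argues that oddness of $h_L$ forces the $2$-part of $\bigl[\mO_L^{\times}:\prod_i\mO_{Q_i}^{\times}\bigr]$ to be exactly $2^4$, uses the identity $\beta^4=\prod_i N^L_{Q_i}(\beta)/(N^L_{\mathbb{Q}}\beta)^3$ to see the quotient is a $2$-group, concludes $\mO_L^{\times}=\prod_i\mO_{B_i}^{\times}$, and then derives norm $+1$ for all units as in Proposition~\ref{norm+1}. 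You instead observe that $U_1$ already consists of norm $+1$ units, so the hypothetical unit $u$ forces one extra factor of $2$ in $[\mO_L^{\times}:U_0]$ and hence $v_2(h_L)\geq 1$ directly; this bypasses the $\beta^4$ step and the identification of $\mO_L^{\times}$ entirely, and is arguably cleaner. Your genus-theory input is also obtained differently (narrow-equals-wide class group and $2$-rank $t_i-1$ plus a ramification count, versus the paper's explicit genus-field containments giving $2\cdot2\cdot2\cdot4$), but both yield $2^5$. Two points to tighten: first, ``$[\mO_L^{\times}:U_1]\geq 2$, whence $v_2([\mO_L^{\times}:U_1])\geq 1$'' is a non sequitur as written; what you actually need, and already have, is that $U_1\subseteq\ker\bigl(N^L_{\mathbb{Q}}\colon\mO_L^{\times}\to\{\pm1\}\bigr)$ while $N^L_{\mathbb{Q}}(u)=-1$, so $\mO_L^{\times}/U_1$ surjects onto $\{\pm1\}$ and the index is even. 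Second, your claim that every ramified rational prime ramifies in exactly four of the $Q_i$ is false for $p=2$ in a general totally real triquadratic field (the inertia at $2$ can have order $4$, e.g.\ in $\mathbb{Q}(\sqrt{2},\sqrt{3},\sqrt{5})$), but it does hold here: since every quadratic subfield has a norm $-1$ fundamental unit, all odd primes dividing the $D_i$ are $\equiv 1 \bmod 4$, so each $D_i\equiv 1$ or $2 \bmod 4$ and $2$ ramifies in exactly the (zero or four) subfields with even $D_i$ --- this is precisely the case you flagged, and it works out, so with these two small repairs the argument is complete.
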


\begin{proof}
 Assume to the contrary that $L$ is a totally real triquadratic field with odd class number which contains a unit $\epsilon$ of norm $N^{L}_{\mathbb{Q}}(\epsilon)= -1$. \\

\begin{lem}\label{lemma:biquadstructure} If $L$ has an element of norm $-1$, then every biquadratic subfield $B$ of $L$ has unit group structure given by  $ \{ \epsilon_{1}, \epsilon_{2}, \sqrt{\epsilon_{1} \epsilon_{2} \epsilon_{3}} \} $, where $\epsilon_{i}, 1 \leq i \leq 3$ are the fundamental units in each of the three quadratic subfields of $B$.
\end{lem}
\begin{proof}
Since $L$ has an element of norm $-1$, by Proposition ~\ref{proposition:subfield} every biquadratic subfield $B$ of $L$ also contains a unit $u$ of norm $N_{\mathbb{Q}}^B u =-1$.  Hence by Corollary ~\ref{corollary:biquad} we know $B$ must have a system of fundamental units given by $ \{ \epsilon_{1}, \epsilon_{2}, \sqrt{\epsilon_{1} \epsilon_{2} \epsilon_{3}} \} $, where $\epsilon_{1}, \epsilon_{2}$, and $\epsilon_{3}$ are the fundamental units in each of the three quadratic subfields of $B$.
\end{proof}

The structure of the unit group of $L$ must be further investigated, so we look more explicitly at $L$ to determine this structure.  $L= \mathbb{Q}( \sqrt{m}, \sqrt{n}, \sqrt{d})$ contains the seven quadratic subfields $Q_{1}=\mathbb{Q}(\sqrt{m})$, $Q_{2}=\mathbb{Q}(\sqrt{n})$, $Q_{3}=\mathbb{Q}(\sqrt{d})$, $Q_{4}=\mathbb{Q}(\sqrt{mn})$, $Q_{5}=\mathbb{Q}(\sqrt{md})$, $Q_{6}=\mathbb{Q}(\sqrt{nd})$, $Q_{7}=\mathbb{Q}(\sqrt{mnd})$.  Let $\epsilon_{r}$ denote the fundamental unit of the quadratic field $\mathbb{Q}(\sqrt{r})$ for each of the quadratic subfields listed above.  In addition, we know that $L$ contains the seven biquadratic subfields $B_{1}=\mathbb{Q}(\sqrt{m}, \sqrt{n})\ni \alpha_1 =\sqrt{\epsilon_m\epsilon_n\epsilon_{mn}}$, $B_{2}=\mathbb{Q}(\sqrt{m}, \sqrt{d})\ni \alpha_2 =\sqrt{\epsilon_m\epsilon_d\epsilon_{md}}$, $B_3=\mathbb{Q}(\sqrt{n}, \sqrt{d})\ni \alpha_3 =\sqrt{\epsilon_n\epsilon_d\epsilon_{nd}}$, $B_4=\mathbb{Q}(\sqrt{m}, \sqrt{nd})\ni \alpha_4 =\sqrt{\epsilon_m\epsilon_{nd}\epsilon_{mnd}}$, $B_{5}=\mathbb{Q}(\sqrt{n}, \sqrt{md})\ni \alpha_5 =\sqrt{\epsilon_n\epsilon_{md}\epsilon_{mnd}}$, $B_{6}=\mathbb{Q}(\sqrt{d}, \sqrt{mn})\ni \alpha_6 =\sqrt{\epsilon_d\epsilon_{mn}\epsilon_{mnd}}$, and $B_7 =\mathbb{Q}(\sqrt{mn}, \sqrt{md})\ni \alpha_7 =\sqrt{\epsilon_{mn}\epsilon_{md}\epsilon_{nd}}$.  Since $L$ has a unit of norm $-1$, by Lemma \ref{lemma:biquadstructure}  we must have that for each of the biquadratic fields $B_{i}$ of $L$ listed above, the element $\alpha_{i}=\sqrt{\epsilon_{r} \epsilon_{s} \epsilon_{t}} \in \mO_{B_{i}}^{\times}\subseteq \mO_{L}^{\times}$ for the appropriate choice of numbers $r,s$, and $t$.  \\

Since $L$ is totally real of degree $2^3$, we have that $v= 3(2^2 - 1)=9$ and so Kuroda's class number formula yields 
$$|C_{L}| = \frac{1}{2^{9}} \cdot \left[ \mO_{L}^{\times} : \prod_{i=1}^{7} \mO_{Q_{i}}^{\times} \right] \cdot \prod_{i=1}^{7} h_{i}.$$ 
 By definition $\prod_{i=1}^{7} \mO_{Q_{i}}^{\times}$ is generated as a $\mathbb{Z}-$module by $\pm 1$ and the $\epsilon_{r}$, the fundamental units in quadratic subfields $Q_i$.  Note that $\alpha_i \in \mO_L^{\times}$ and $\alpha_{i}^{2} \in \prod_{i=1}^{7} \mO_{Q_{i}}^{\times}$.  We also have the inequality 
$$\left[ \mO_{L}^{\times} : \prod_{i=1}^{7} \mO_{Q_{i}}^{\times} \right] \geq \left[ \mO_{L}^{\times}/(\mO_{L}^{\times})^{2} : \prod_{i=1}^{7} \mO_{Q_{i}}^{\times}/ (\prod_{i=1}^{7} \mO_{Q_{i}}^{\times})^{2} \right],$$ 
where $\mO_{L}^{\times}/(\mO_{L}^{\times})^{2}$ and $\prod_{i=1}^{7} \mO_{Q_{i}}^{\times}/ (\prod_{i=1}^{7} \mO_{Q_{i}}^{\times})^{2}$ are modules over $\mathbb{Z}/2\mathbb{Z}$.  \\

\begin{pro}
The $\alpha_{i} \in \mO_{L}^{\times}/(\mO_{L}^{\times})^{2}$ generate a rank $4$ module over $\mathbb{Z}/2\mathbb{Z}$.
\end{pro}

\begin{proof}

Consider the module generated by the units $\alpha_i \in L$.  We write each $\alpha_i$ in terms of the seven elements $\sqrt{\epsilon_m}, \sqrt{\epsilon_n},\sqrt{\epsilon_d},\sqrt{\epsilon_{mn}},\sqrt{\epsilon_{md}}, \sqrt{\epsilon_{nd}}, \sqrt{\epsilon_{mnd}}$.  Switch to additive notation (which takes products to sums), and write $\beta_i$ to denote $\alpha_i$ in additive notation.  We want to determine the rank of the matrix over $\mathbb{Z}/2\mathbb{Z}$ whose entries are the values from the following table.  
\begin{center}
\begin{tabular}{|c|ccccccc|}
\hline
&$\sqrt{\epsilon_m}$ & $\sqrt{\epsilon_n}$ & $\sqrt{\epsilon_d}$ & $\sqrt{\epsilon_{mn}}$ & $\sqrt{\epsilon_{md}}$ & $\sqrt{\epsilon_{nd}}$& $\sqrt{\epsilon_{mnd}}$\\ \hline
$\beta_1$ & 1&1&0&1&0&0&0\\
$\beta_2$ & 1&0&1&0&1&0&0 \\
$\beta_3$ & 0&1&1&0&0&1&0 \\
$\beta_4$ & 0&0&1&1&0&0&1\\
$\beta_5$ & 0&1&0&0&1&0&1\\
$\beta_6$ & 1&0&0&0&0&1&1\\
$\beta_7$ & 0&0&0&1&1&1&0\\
\hline
\end{tabular}
\end{center}
 
By adding rows and reducing mod 2, we see that $\beta_1+\beta_2 +\beta_3 = \beta_7$, $\beta_2+\beta_4+\beta_6= \beta_7$, and $\beta_3+\beta_4+\beta_5=\beta_7$.  The rows corresponding to  $\beta_2$, $\beta_3$, $\beta_4$, and $\beta_7$ are linearly independent rows since each pivots in a different column, and so we see that this matrix has rank 4.  Hence in multiplicative notation, we have shown that the module generated by the $\alpha_i$ is minimally generated by \\$\alpha_{2}= \sqrt{\epsilon_{m}\epsilon_{d}\epsilon_{md}}$, $\alpha_{3}= \sqrt{\epsilon_{n}\epsilon_{d}\epsilon_{nd}}$, $\alpha_{4}= \sqrt{\epsilon_{m}\epsilon_{nd}\epsilon_{mnd}}$, and $\alpha_{7}= \sqrt{\epsilon_{mn}\epsilon_{md}\epsilon_{nd}}$  as a $\mathbb{Z}/2\mathbb{Z}-$ module.\\
\end{proof} 

This proposition proves $\left[ \mO_{L}^{\times}/(\mO_{L}^{\times})^{2} : \prod_{i=1}^{7} \mO_{Q_{i}}^{\times}/ (\prod_{i=1}^{7} \mO_{Q_{i}}^{\times})^{2} \right] \geq 2^{4}$,\\  which combined with the previous inequality proves $\left[ \mO_{L}^{\times} : \prod_{i=1}^{7} \mO_{Q_{i}}^{\times} \right] \geq 2^4$.\\

We use this inequality in combination with the class number formula to obtain a contradiction to the assumption that $L$ contains a unit of norm $-1$.  $L$ contains the quadratic subfield $Q_{4}=\mathbb{Q}(\sqrt{mn})$, which has a unit of norm $-1$.  Thus $\mathbb{Q}(\sqrt{m}, \sqrt{n})$ is contained in the genus field of $\mathbb{Q}(\sqrt{mn})$, and so $\mathbb{Q}(\sqrt{m}, \sqrt{n})$ is contained in the Hilbert class field of $Q_{4}$.  Hence the class number of $Q_{4}$, $h_{4}$, is divisible by $2$.  Similarly $\mathbb{Q}(\sqrt{md})$ and $\mathbb{Q}(\sqrt{nd})$  each have class number divisible by $2$, and $Q_{7}=\mathbb{Q}(\sqrt{mnd})$ has class number divisible by $4$ since $\mathbb{Q}(\sqrt{m}, \sqrt{n}, \sqrt{d})$ is a degree 4 extension of $Q_{7}$ contained in the genus field of $Q_{7}$.  Thus $\prod_{i=1}^{7} h_{i} \geq 1 \cdot 1 \cdot 1 \cdot 2 \cdot 2 \cdot 2 \cdot 4 = 2^{5}$.  Kuroda's class number formula now reads  
$$|C_{L}| = \frac{1}{2^{9}} \cdot \left[ \mO_{L}^{\times} : \prod_{i=1}^{7} \mO_{Q_{i}}^{\times} \right] \cdot \prod_{i=1}^{7} h_{i} \geq \frac{1}{2^{9}} \cdot 2^{4} \cdot 2^{5}=1.$$  

Since the class number of $L$ is odd, $2 \nmid |C_L|$ and so $ 2 \nmid \left[ \mO_{L}^{\times} : \prod_{i=1}^{7} \mO_{Q_{i}}^{\times} \right]/2^4$.  Thus it must be that 4 is the highest power of 2 dividing $\left[ \mO_{L}^{\times} : \prod_{i=1}^{7} \mO_{Q_{i}}^{\times} \right]$.  However, for any $\beta \in \mO_{L}^{\times}$, $\beta^{4}= \prod_{i=1}^{7}N^{L}_{Q_{i}}(\beta)/(N^{L}_{\mathbb{Q}}(\beta))^{3} \in \prod_{i=1}^{7} \mO_{Q_{i}}^{\times}$, and so $\mO_{L}^{\times} / (\mO_{L}^{\times})^{4}$, a group of order $2^{15}$, surjects onto $\mO_{L}^{\times} / \prod_{i=1}^{7} \mO_{Q_{i}}^{\times}$.  Thus $\left[ \mO_{L}^{\times} : \prod_{i=1}^{7} \mO_{Q_{i}}^{\times} \right]= 2^{c}$ for some $c \leq 15$.  Then since the class number of $L$ is odd, we see that $c=4$, and so the $\alpha_{i}$ generate all of the elements of $\mO_{L}^{\times}$ not in $\prod_{i=1}^{7} \mO_{Q_{i}}^{\times}$.  Now, by Lemma ~\ref{lemma:biquadstructure}, for each $i$ we know that $\alpha_i$ lie in $ \mO_{B_i}^{\times}$ for $B_i$ the biquadratic subfields of $L$.  Hence $\mO_{L}^{\times} = \prod_{i=1}^{7} \mO_{B_{i}}^{\times}$.  We now show this last assertion implies there is no unit of $L$ of norm $-1$, contradicting our assumption.  

\begin{pro}
Suppose $\mO_{L}^{\times} = \prod_{i=1}^{7} \mO_{B_{i}}^{\times}$, where the $B_i$ are the biquadratic subfields contained in $L$.  Then every element of $L$ has norm +1.
\end{pro}
\begin{proof}
Let $u \in \mO_{L}^{\times}$.  Then $u =\pm \prod_{i=1}^{7} u_{i}$, $u_{i} \in  O_{B_{i}}^{\times}$, and so 
\begin{eqnarray*} 
N^{L}_{\mathbb{Q}}(u) &=& N^{L}_{\mathbb{Q}}(\prod_{i=1}^{7} u_{i}) = \prod_{i=1}^{7} N^{L}_{\mathbb{Q}}(u_{i}) \\
&=& \prod_{i=1}^{7} N^{B_{i}}_{\mathbb{Q}}(N^{L}_{B_{i}}u_{i}) = \prod_{i=1}^{7} N^{B_{i}}_{\mathbb{Q}}(u_{i})^{2} \\
&=& \prod_{i=1}^{7} (\pm 1)^{2} = 1, 
\end{eqnarray*} 
since $u_{i} \in B_{i}$.  Hence every unit of $L$ has norm $+1$.  
\end{proof}
 
We have now proven that a triquadratic field $L$ has no unit of norm $-1$ when the class number of $L$ is odd.  
\end{proof}

We can use this result to obtain results on the existence of units of norm $-1$ for more general totally real multiquadratic fields by induction.  Let $K$ be a totally real multiquadratic number field which is Galois over $\mathbb{Q}$ of degree $\left[ K : \mathbb{Q} \right] = 2^{m}$, so that $\Gal(K/\mathbb{Q}) \cong (\mathbb{Z}/2\mathbb{Z})^{m}$ for some $m \geq 3$.  Then there are integers $d_{i} > 0$ so that $K= \mathbb{Q}(\sqrt{d_{1}}, \ldots ,\sqrt{d_{m}})$, and for all $i \neq j$, $d_i \nmid d_j$.  $K$ has $t:= 2^{m} -1$ quadratic subfields, among them the fields $\mathbb{Q}(\sqrt{d_{i}})$ as well as various quadratic fields such as $\mathbb{Q}(\sqrt{d_i d_j})$.  Denote these quadratic subfields as $Q_{1}, Q_{2}, \ldots ,Q_{t}$. In order for $K$ to contain a unit $\epsilon$ with $N^{K}_{\mathbb{Q}}(\epsilon) = -1$, by Proposition ~\ref{proposition:subfield} every subfield of $F$ of $K$ must also contain a unit of norm $-1$.  In particular, each quadratic field $Q_{i}$ of $K$ must contain a unit $u$ of norm $N_{\mathbb{Q}}^{Q_i} u = -1$.  Thus no $d_i \equiv 3 \mod 4$.  In addition, each triquadratic subfield $T$ of $K$ must contain a unit $u$ of norm $N_{\mathbb{Q}}^T u = -1$.  However, from the previous theorem we know that no triquadratic field contains a unit of norm $-1$ when the class number of $T$ is odd.  Hence we can deduce that when the class number of any triquadratic subfield $T$ of $K$ is odd, then no unit of $K$ has norm $-1$.  Next we will show that this is always case when the class number of $K$ is also odd; that is, every multiquadratic field $K$ with odd class number contains a triquadratic subfield with odd class number.\\

\begin{thm}\label{nonorm-1}
If $K=\mathbb{Q}(\sqrt{d_{1}}, \ldots ,\sqrt{d_{m}})$ is a totally real multiquadratic field of degree $n=2^m$, $m \geq 3$ and the class number of $K$ is odd, then $K$ contains no unit of norm $-1$.
\end{thm}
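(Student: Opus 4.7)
The plan is to prove the theorem by induction on $m \ge 3$, with the base case $m = 3$ supplied by Theorem \ref{thm:triodd}. For the inductive step, suppose the statement is known for all multiquadratic fields of degree $2^k$ with $3 \le k < m$, and let $K$ be a multiquadratic field of degree $n = 2^m$ with $m \ge 4$ and odd class number. Assume for contradiction that $K$ contains a unit of norm $-1$. By Proposition \ref{proposition:subfield}, every subfield of $K$ inherits a unit of norm $-1$; in particular every triquadratic subfield of $K$ must then have even class number by Theorem \ref{thm:triodd}, and every multiquadratic subfield of degree $2^{m-1}$ must have even class number by the inductive hypothesis. The task is to use this information, together with the special structure of units in $K$, to derive that $h_K$ is also even.

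The main step is to adapt the Kuroda-formula argument from the proof of Theorem \ref{thm:triodd} directly to $K$. Kuroda's formula (Theorem \ref{Kuroda}) gives
$$ h_K \;=\; \frac{1}{2^{m(2^{m-1}-1)}}\,\bigl[\mathcal{O}_K^{\times} : \textstyle\prod_{i=1}^{2^m-1}\mathcal{O}_{Q_i}^{\times}\bigr]\,\prod_{i=1}^{2^m-1} h_{Q_i}, $$
and the goal is to show that the $2$-adic valuation of the right-hand side is strictly positive. Each biquadratic subfield $B$ of $K$ inherits a unit of norm $-1$, so by Corollary \ref{corollary:biquad} it admits a system of fundamental units $\{\epsilon_1,\epsilon_2,\sqrt{\epsilon_1\epsilon_2\epsilon_3}\}$; the element $\alpha_B = \sqrt{\epsilon_1\epsilon_2\epsilon_3} \in \mathcal{O}_K^\times$ squares into $\prod_i \mathcal{O}_{Q_i}^\times$ but is nontrivial modulo it. Counting the $\mathbb{F}_2$-rank of the span of the classes $\alpha_B$ in $\mathcal{O}_K^\times/\bigl((\mathcal{O}_K^\times)^2\cdot\prod_i \mathcal{O}_{Q_i}^\times\bigr)$ would yield a lower bound on $v_2\bigl([\mathcal{O}_K^\times : \prod_i \mathcal{O}_{Q_i}^\times]\bigr)$, while genus theory (each $\mathbb{Q}(\sqrt d)$ with $k$ distinct prime divisors of $d$ has class number divisible by $2^{k-1}$) would provide a lower bound on $v_2(\prod_i h_{Q_i})$.

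The principal obstacle is the combinatorial independence calculation for the $\alpha_B$ as $B$ ranges over the $\tfrac{(2^m-1)(2^{m-1}-1)}{3}$ biquadratic subfields of $K$: the $m = 3$ case produced $\mathbb{F}_2$-rank $4$ from $7$ biquadratic subfields, and extending this to general $m$ requires a careful analysis of the relations among the $\alpha_B$. A useful tool here should be the identification $\mathcal{O}_K^\times/(\mathcal{O}_K^\times)^2 \cong \mathbb{F}_2[G]$ of Proposition \ref{prop:ok=f2}, which applies because $K$ contains a unit of norm $-1$; this should let me track the classes of the $\alpha_B$ via their images in $\mathbb{F}_2[G]$. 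Combined with the genus-theoretic count for $v_2(\prod h_{Q_i})$, the resulting lower bound should strictly exceed the denominator exponent $m(2^{m-1}-1)$, forcing $h_K$ to be even and contradicting the hypothesis, thereby completing the induction.
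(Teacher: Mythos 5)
Your plan has a genuine quantitative gap: the Kuroda lower bound you propose is exactly tight, never strict, so it cannot force $h_K$ to be even. This is already visible in the paper's own base case: for $m=3$ the unit-index bound is $2^4$ and the genus bound is $2^5$, giving $h_L \geq \frac{1}{2^9}\cdot 2^4\cdot 2^5 = 1$, i.e.\ no parity information at all; the proof of Theorem \ref{thm:triodd} only reaches a contradiction through the extra step that oddness forces \emph{equality} everywhere, hence $\left[\mathcal{O}_L^{\times}:\prod\mathcal{O}_{Q_i}^{\times}\right]=2^4$, hence $\mathcal{O}_L^{\times}=\prod_j\mathcal{O}_{B_j}^{\times}$, and then every unit has norm $+1$ (a square of a biquadratic norm), contradicting the assumed norm $-1$ unit. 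The same tightness persists for larger $m$: with each $d_i$ prime, the genus count gives $v_2\bigl(\prod h_{Q_i}\bigr)\geq (m-2)2^{m-1}+1$, while the classes $\epsilon_r\epsilon_s\epsilon_t$ attached to the biquadratic subfields span, inside $\prod\mathcal{O}_{Q_i}^{\times}$ modulo squares, the $\mathbb{F}_2$-code generated by the line--point incidence vectors of $PG(m-1,2)$, whose $2$-rank is $2^m-m-1$ (it is $4$ for $m=3$ and $11$ for $m=4$); since $\bigl(2^m-m-1\bigr)+\bigl((m-2)2^{m-1}+1\bigr)=m(2^{m-1}-1)$ exactly, your bound never strictly exceeds the denominator exponent. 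So the contradiction you aim for does not materialize without an analogue of the equality-forcing argument above, and in addition you leave open the rank computation for the $\alpha_B$ (the incidence-code calculation), you never justify that the $d_i$ may be taken prime (without this the genus count is not even monotone, since a product $d_S$ of composite $d_i$'s can have fewer prime factors), and your inductive hypothesis is never actually used, so the proposed ``inductive step'' is really a self-contained argument at level $m$ that is incomplete as it stands.

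For comparison, the paper's inductive step is entirely different and much lighter, and it is a possible repair route for you: assuming $h_K$ odd, Lemma \ref{mqprime} shows each $d_i$ must be a prime $p_i$ (otherwise the genus field is a nontrivial unramified abelian extension of $K$, forcing $2\mid h_K$); then $K/K_{m-1}$ with $K_{m-1}=\mathbb{Q}(\sqrt{p_1},\ldots,\sqrt{p_{m-1}})$ is totally ramified at primes above $p_m$, so Lemma \ref{totramified} gives $h_{K_{m-1}}\mid h_K$, hence $h_{K_{m-1}}$ is odd, the induction hypothesis says $K_{m-1}$ has no unit of norm $-1$, and Proposition \ref{proposition:subfield} concludes that $K$ has none either; the Kuroda machinery is confined to the base case $m=3$. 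Alternatively, you could try to complete your own route by generalizing the exact-equality argument of Theorem \ref{thm:triodd} (equality forces $\mathcal{O}_K^{\times}=\prod_B\mathcal{O}_B^{\times}$ over biquadratic subfields, whence all unit norms are $+1$), but that requires carrying out the incidence-rank computation and the reduction to prime $d_i$ that your proposal currently only sketches.
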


\begin{proof}
Let $K$ be as given. Assume to the contrary that $K$ contained a unit of norm $-1$; then no $d_i \equiv 3 \mod 4$.  We proceed in several steps.  

\begin{lem}\label{mqprime}
If $K$ has odd class number, then in fact each $d_i=p_i$ is a prime number with $p_i \equiv 1 \mod 4$ for each $i$, except possibly $d_1=2$.
\end{lem}
\begin{proof}
Let $p_1, \ldots , p_s$ denote all of the prime factors of the various $d_i$, with $p_1 = 2$ or an odd prime congruent to $1 \mod 4$, and all other $p_k \equiv 1 \mod 4$.  Since by assumption $ d_i \nmid d_j$ if $i \neq j$, we know $s \geq m$, with equality holding if and only if each $d_i= p_i$ is a prime number.  Now, if  $ s > m$, then the field $F=\mathbb{Q}( \sqrt{p_1} , \ldots , \sqrt{p_s} )$ is a proper field extension of $K$ of degree $2^s$ contained in the genus field $E$ of $K$.  Hence $F \subseteq H$, the Hilbert class field of $K$, and so $2 | \left|C_K\right|$.  This is impossible by assumption, so it must be that $s=m$ and each $d_i =p_i$ is a prime number, as claimed.
\end{proof}

Next we prove the following lemma relating class numbers of totally ramified extensions, which we will prove in a more general setting.  It is expected that this proof can be shortened.
\begin{lem}\label{totramified}
If $F \subseteq L$ is a totally ramified extension of totally real Galois number fields, then the class number of $F$ divides the class number of $L$.
\end{lem}
\begin{proof}
Let $H$ denote the Hilbert class field of $F$ and $C_F$ denote the class group of $F$, so $[H : F]= |C_F|$.  First we determine $L \cap H$.  Let $\p_0$ denote a prime of $F$ that is totally ramified in $L$.  Then $\p_0$ is totally ramified in $L \cap H \subseteq L$.  Also, since $H$ is the Hilbert class field of $F$, $H$ is an unramified extension of $F$, and so $\p_0$ is unramified in $H$ and so unramified in $L \cap H \subseteq H$.  Then we see that $\p_0$ is both unramified and ramified in $L \cap H /F$, so it must be that $F= L \cap H$.\\

Let $H_L$ denote the Hilbert class field of $L$ and $C_L$ denote the class group of $L$.  We show $HL$ is an extension of $L$ unramified at all primes, so that $HL \subseteq H_L$.  First, $HL/L$ is unramified at the infinite primes since $F$, $H$, and $L$ are all totally real.  Next, since $H \cap L = F$, by Galois theory we have an isomorphism $\Gal(HL/L) \cong \Gal (H/ H\cap L) = \Gal (H/F)$.  The isomorphism is given by restriction of an element $\sigma: HL \rightarrow HL$ to $\sigma|_H :H \rightarrow H$. Then for any $\p$ prime in $F$ and  $\Q $ prime in $L$ with $\Q | \p$, the Frobenius element at the prime $\Q$, $(\Q, HL/L)$ maps to the restriction $(\Q, HL/L)|_H$.  By properties of Frobenius elements, $(\Q, HL/L)|_H = (\p, H/F)$.  Since $H/F$ is unramified, every $(\p, H/F)$ is well defined, and so every $(\Q, HL/L)$ is well defined.  This means every ramification number $e(\Q, HL/L)$ equals one, or equivalently that each prime $\Q$ of $L$ is unramified in $HL/L$.  Thus $HL/L$ is unramified at all finite and infinite primes, so that $HL \subseteq H_L$.  Then $|C_L| = [ H_L : L] = [H_L : HL][HL:L] = [H_L: HL][H:F] = [H_L:HL] |C_F|$, since $\Gal(HL/L ) \cong \Gal(H/F)$ implies $[HL:L]=[H:F]$.  Thus we see that the class number $|C_F|$ of $F$ divides the class number of $L$, $|C_L|$. 
\end{proof}
We proceed with the proof of Theorem \ref{nonorm-1} by induction on $m$, where $[K:\mathbb{Q}]=2^m$.  We claim that for all $m \geq 3$, if the class number of $K$, a multiquadratic field of degree $2^m$ is odd, then $K$ has no unit of norm $-1$.  The base case, $m=3$, is the triquadratic case and was proven in Theorem \ref{thm:triodd}.  Now assume the induction hypothesis, so if $L$ is any multiquadratic field of degree $2^t$ with $t < m$ and odd class number, then $L$ has no unit of norm $-1$.  Consider a multiquadratic field $K$ with odd class number.  From Lemma \ref{mqprime}, $K$ is of the form $K=\mathbb{Q}(\sqrt{p_1}, \ldots ,\sqrt{p_m})$, where each $p_i$ is a prime number.  Let $K_{m-1} = \mathbb{Q}(\sqrt{p_1}, \ldots , \sqrt{p_{m-1}}) \subseteq K$, which is a multiquadratic field of degree $2^{m-1}$.  Then $K / K_{m-1}$ is a degree $2$ extension of fields that is totally ramified at primes of $K_{m-1}$ lying above $p_m$.  From Lemma \ref{totramified}, we know the class number of $K_{m-1}$ divides the class number of $K$.  Since the class number of $K$ is odd, this implies the class number of $K_{m-1}$ must also be odd.  Hence by inductive hypothesis $K_{m-1}$ contains no unit of norm $-1$, and so $K$ contains no unit of norm $-1$ by Proposition ~\ref{proposition:subfield}. 
\end{proof}

This shows that for general multiquadratic fields $K$ of degree $2^m$, $m \geq 3$ with class number $1$ (that is, the Hilbert class field $H=K$), it is never the case that there are  a positive
density of primes split in $K/\mathbb{Q}$ for which $K$ has minimal index mod $p$. It has already been shown by A. Fr\"olich \cite{Fr} that if $ m \geq 5$ then $K$ has class number $>1$, which gives an alternate argument for why this will never be true for infinitely many $p$ split in $K$.  We have proven this for $m=3,4$ as well.  
\\

It should be noted that there are multiquadratic fields of degree at least eight that do contain units of norm $-1$; one such example is $\mathbb{Q}(\sqrt{5}, \sqrt{13}, \sqrt{37})$, which has class number $2$.

\section{Application to ray class fields}\label{rayclassfields}

For a totally real Galois extension  $K$ of $\mathbb{Q}$ of degree $n$, let $H$ denote the Hilbert class field of $K$ and let $C_K$ denote the class group of $K$, so that $C_K \cong \Gal(H/K)$.  For a prime $p \in \mathbb{Z}$ which splits completely in $K$, consider the ray class field  $L_{p}$ of $K$ of conductor $(p)=p\mO_{K}$.  The corresponding ray class group $RC(p)\cong \Gal(L_{p}/K)$ of $L_{p}/K$ fits into the exact sequence
$$ \mO_{K}^{\times} \stackrel{\psi_p}{\longrightarrow} (\mO_{K}/p\mO_{K})^{\times} \longrightarrow  RC(p) \longrightarrow C_{K} \longrightarrow 1. $$
By exactness, the size of the group $RC(p)$ is $|RC(p)| = | C_{K}|\left[(\mO_{K}/p\mO_K)^{\times} : \psi_p(\mO_{K}^{\times})\right]$. Let $\zeta_p$ denote a primitive $p^{th}$ root of unity and notice $\zeta_p + \zeta_p^{-1} \in \mathbb{R}$.  Since $H({\zeta_p + \zeta_p^{-1} })$ is an abelian extension of $K$ which is ramified only at primes dividing $p\mO_{K}$,  $H({\zeta_p + \zeta_p^{-1}}) \subseteq L_{p}$.  

\begin{pro}\label{psimaximal} 
For a prime $p$ completely split in $K$, $L_{p}= H({\zeta_p + \zeta_p^{-1}})$ if and only if $K$ has minimal index mod $p$.
\end{pro}

\begin{proof}

Since $p$ splits completely in $K/\mathbb{Q}$, $(\mO_{K}/p\mO_{K})^{\times} \cong \prod_{i=1}^{n}{\mathbb{F}^{\times}_{p}}$.  Thus $\displaystyle |RC(p)|=| C_{K}| \frac{(p-1)^n}{|\psi_p(\mO_{K}^{\times})|}$ and $|Im(\psi_p)|=2(p-1)^{n-1}$ if and only if $\displaystyle |RC(p)| = | C_{K}| \frac{(p-1)^n}{|\psi_p(\mO_{K}^{\times})|}=| C_{K}| (p-1)/2$, or equivalently $\left[L_{p}: K \right]=\left[H({\zeta_p + \zeta_p^{-1}}): K \right]$.  Since $H({\zeta_p + \zeta_p^{-1}}) \subseteq L_{p}$, this happens if and only if $H({\zeta_p + \zeta_p^{-1}})= L_{p}$.
\end{proof}

We can now use Theorem \ref{maintheorem} to deduce the following.

\begin{thm}
Let $K$ be a totally real multiquadratic field that satisfies $e_2(K/\mathbb{Q}) \leq 2$, let $H$ be
the Hilbert class field of $K$ and let $\zeta_p$ be a primitive $p^{th}$ root of unity.  Then
the generalized Riemann hypothesis implies that there is a positive
density of primes completely split in $K/\mathbb{Q}$ for which the ray class field  $L_{p}$ of $K$ of conductor $(p)=p\mO_{K}$  equals $H(\zeta_p + \zeta_p^{-1})$ if and only if $K$ contains a unit of norm $-1$.  The density $c$ is given in this case by
$$ c= \frac{1}{2[K:\mathbb{Q}]} \left( \sum_{k|\Delta} \frac{\mu(k)|C_k|}{[K_k:K]} \right)\prod_{l \nmid 2\Delta}  \left(1 - \frac{1}{(l-1)}\left(1-\frac{(l-1)^{n-1}}{l^{n-1}}\right)\right).$$
\end{thm}

\section{Acknowledgments}

This article is a summary and generalization of my doctoral research at Northwestern University.   I would like to thank Frank Calegari for his never-ending guidance, support, and patience throughout this process.  I would also like to thank Matt Emerton for helpful conversations and the referee for insightful comments and suggestions.  Lastly, I would like to thank the administration and my colleagues at Birmingham-Southern College for support while I completed this project.

\end{document}